\documentclass[11pt,reqno]{amsart}
\usepackage[utf8]{inputenc}
\usepackage[T1]{fontenc}
\usepackage{amsmath}
\usepackage{amsfonts}
\usepackage{amssymb}
\usepackage{amsthm}
\usepackage{setspace}
\usepackage{enumerate}
\usepackage{enumitem}
\usepackage{todonotes}
\usepackage{geometry}
\usepackage{graphicx,color}
\usepackage[bottom]{footmisc}
\usepackage[english]{babel}
\usepackage[autostyle, english = american]{csquotes}
\usepackage[unicode=true,
 bookmarks=true,bookmarksnumbered=false,bookmarksopen=false,
 breaklinks=false,pdfborder={0 0 1},backref=false,colorlinks=true]{hyperref}

\newcounter{theorem}[section]
\numberwithin{theorem}{section}

\newtheorem{definition}[theorem]{Definition}
\newtheorem{theo}[theorem]{Theorem}
\newtheorem{lemma}[theorem]{Lemma}
\newtheorem{prop}[theorem]{Proposition}
\newtheorem{corollary}[theorem]{Corollary}

\newtheorem*{remark}{Remark}

\newcommand\norm[1]{\left\lVert#1\right\rVert}

\DeclareMathOperator{\vol}{vol}

\begin{document}

	\title[The curvature of $P(S^1)$]
	{The Wasserstein space of the circle is not flat}

	\author[A. M. de S\'a Gomes]{Andr\'e Magalh\~aes de S\'a Gomes}
	\address{Andr\'e Magalh\~aes de S\'a Gomes\\
		Instituto de Matemática e Estatística - USP\\
		Rua do Matão, 1010 | 05508-090, São Paulo, SP, Brazil.}
	\email{andremsg@usp.br}

	\author[C.S. Rodrigues]{Christian S.~Rodrigues}
	\address{Christian S.~Rodrigues\\
		Institute of Mathematics, Department of Applied Mathematics\\
		Universidade Estadual de Campinas\\
		13.083-859 Campinas - SP\\
   		Brazil\\
   		and Max-Planck-Institute for Mathematics in the Sciences\\
  		Inselstr. 22\\
  		04103 Leipzig\\
  		Germany
		}
  	\email{rodrigues@ime.unicamp.br}

\author[L. A. B. San Martin]{Luiz A. B. San Martin}
	\address{Luiz A. B. San Martin\\
		Institute of Mathematics, Department of Mathematics\\
		Universidade Estadual de Campinas\\
		13.083-859 Campinas - SP\\
   		Brazil
		}
  	\email{smartin@ime.unicamp.br}

	\date{\today}

	\begin{abstract}
		We show that the Wasserstein space of the circle is not flat, in contrast with the flatness of the Wasserstein spaces of the real line and of an interval. Its sectional curvature is governed by a symplectic form on the Fourier coefficients, so that a plane is flat exactly when it is isotropic for that form, and on the plane spanned by the $n$-th Fourier modes the curvature is bounded above by $3n^2$, with equality exactly at the uniform measure. It is therefore unbounded. We compute it inside the formal Riemannian structure of Otto and confirm it directly from the definition of $W_2$, which also shows that no neighbourhood of the uniform measure in $P_2(S^1)$ is CAT(0). The result corrects the widely stated expectation that a Wasserstein space is flat whenever its base manifold is one-dimensional.
		
		The computation is carried out in the global frame of the tangent bundle of $P^\infty(M)$ induced by the eigenfunctions of the Laplacian of $M$, whose existence was observed by Lott. We prove that it is a Schauder basis of every tangent space and write Otto's metric and its Levi-Civita connection in it. On the circle this gives the metric and connection coefficients of $P^\infty(S^1)$ in closed form, as finite combinations of the Fourier coefficients of the density. A final section puts the frame to work on a computation from dynamics, reading the derivative of the push-forward action of an expanding circle map at its invariant measure as a weighted shift on the Fourier frequencies.
	\end{abstract}

	%\keywords{Disintegration Theorem, Probability, Optimal Transport, Ergodic Theory, Dynamical Systems}

	\subjclass{49Q22, 58B20 (primary), 53C21, 53C22, 37E10 (secondary)}

	\maketitle
	
	\tableofcontents

%%%%%%%%%%%%%%%%%%%%%%%%%%%%%%%%%%%%%%%%%%%%%%%%%%%%%%%%%%%%%%%%%%%%%%%%%%%%%%%%%%%%%%%%%%%%%%%%%%%%%%%%%%%%%%%%%%

	\section*{Introduction}

It is a classical fact that the Wasserstein space of the real line, and that of an interval \cite{chodosh2011interval}, are flat, and this is often extended to the assertion that a Wasserstein space is flat whenever its base manifold is one-dimensional. In a recent paper on sectional curvature, for instance, Sturm recalls ``the well-known fact that the sectional curvature in the Kantorovich-Wasserstein space $(P_2(M),W_2)$ vanishes whenever the base space $M$ is 1-dimensional'' \cite[Remark 5.9]{sturm2026sectional}. The purpose of this paper is to show that the circle is a counterexample. The space $P_2(S^1)$ is not flat, its sectional curvature along the plane spanned by the two Fourier modes of frequency $n$ equals $3n^2$ at the uniform measure, and is therefore unbounded as $n$ grows. We compute it inside the formal Riemannian structure of Otto, and confirm it directly from the definition of $W_2$.

The Wasserstein space over a closed Riemannian manifold carries a rich geometric structure, formally described by the Otto calculus as an infinite-dimensional Riemannian manifold. That description was introduced in order to read dissipative evolution equations as gradient flows of entropy functionals \cite{otto2001geometry}, and its content is variational throughout: geodesics are displacement interpolations \cite{mccann1997aconvexity}, and the convexity of entropy along them characterizes lower Ricci bounds on the base \cite{lott2009ricci,sturm2006geometry}. The two objects computed here, the geodesic equation and the sectional curvature, belong to that circle. 

Several approaches to the geometry of Wasserstein spaces have been developed in the literature. The formal Riemannian structure introduced by Otto \cite{otto2001geometry} has been further investigated and placed on rigorous grounds in subsequent works \cite{lott2006some, villani2009optimal, ambrosio2005gradient}, among others. Both extrinsic and intrinsic viewpoints have been explored, leading to important insights into geodesics, curvature, and variational structures \cite{ding2021geometry,GangboKimPacini2011}.

Despite these advances, the Levi-Civita connection of $P^\infty(M)$ has been described through formulas for covariant derivatives of vector fields rather than through its coefficients in a frame, and to the best of our knowledge those coefficients have not been computed for any specific base manifold.

This gap is sharpest in contrast with the differential structure developed by Gigli \cite{gigli2011inverse,ambrosiogigli2008}. There the tangent and cotangent objects of $P_2(M)$, the Levi-Civita connection and the parallel transport are characterized variationally, by the properties they are required to satisfy rather than by formulas, and it is precisely this that makes the theory available over measures that need not be absolutely continuous and without any smoothness assumption. The price is that the objects so obtained are implicit. Parallel transport, for instance, is produced as a limit of approximations along a geodesic, see also the intrinsic construction of \cite{lott2017intrinsic}, and no expression results from which one could read off a connection coefficient or integrate a geodesic equation. The metric measure version of that calculus, in which vector and tensor fields are elements of $L^\infty$-modules and the connection is defined at that level, is unavailable here for a more basic reason: it presupposes a reference measure on the underlying space and a curvature-dimension condition in the sense of \cite{sturm2006geometry,lott2009ricci}, neither of which $P_2(M)$ provides. The trade we make here is the opposite one, generality for explicitness.

The tool that gives access to the curvature is a global frame for the tangent bundle of $P^\infty(M)$. Lott observed that a basis $\{\phi_i\}$ of $C^\infty(M)$ modulo constants induces a global basis $\{V_{\phi_i}\}$ of $TP^\infty(M)$ \cite[Section 4]{lott2006some}. We give a proof when the $\phi_i$ are eigenfunctions of the Laplace-Beltrami operator, and we make precise the sense in which the family is a basis at each measure (Theorem \ref{thm:parallelization}). In this frame Otto's metric and its Levi-Civita connection have coefficients, and on the circle these coefficients are finite combinations of the Fourier coefficients of the density. Along the way we recover, in a unified and intrinsic notation, several differential and Riemannian formulas that have previously been derived through extrinsic or formal arguments.

We carry out the computation in the first case of interest, the unit circle, where the Fourier basis provides the frame, and we determine the metric coefficients, the connection coefficients and the curvature of $P^\infty(S^1)$.

The curvature turns out to be governed by a symplectic form on the Fourier coefficients: a plane is flat exactly when it is isotropic for that form, and on the plane spanned by the $n$-th modes the sectional curvature is at most $3n^2$, with equality exactly at the uniform measure. In particular $P^\infty(S^1)$ has non-negative but unbounded curvature, and is not flat.

This last conclusion deserves comment, since it runs against the expectation recalled above. It does not contradict the computations of Sturm on the flat torus, however. For $n=1$ he finds that every plane spanned by two Fourier modes of different frequencies is flat at the uniform measure \cite[Example 6.22]{sturm2026sectional}, and these are exactly the planes that are isotropic for the symplectic form above. The planes spanned by $\cos k\theta$ and $\sin k\theta$, which are the planes of $e_k$ and $e_{-k}$ in the notation of \cite{sturm2026sectional} and form the case excluded there, are the ones on which the curvature does not vanish. The reason is topological rather than analytic: for a one-dimensional base the orthogonal complement of the exact one-forms in $\Omega^1_{L^2}(M,\mu)$ is at most a line, and it is a line precisely when $H^1(M)$ does not vanish. Metrically, the same dichotomy appears as the fact that on the circle the monotone coupling is determined only up to a rotation, and optimizing that rotation produces a fourth order gain in the transport cost which is absent on the line. We carry out that computation in Proposition \ref{prop:metric-curvature}, where the fourth order defect of $W_2$ is computed from the definition of $W_2$ alone and agrees with the value given by the Otto calculus. As a purely metric consequence, no neighbourhood of the uniform measure in $P_2(S^1)$ is CAT(0). In dimension two or more the complement is infinite-dimensional for reasons unrelated to topology, so the circle is the smallest case in which this mechanism is visible.

The framework also simplifies computations in dynamical settings. In \cite{Klo13}, Kloeckner computes the derivative of the push-forward action of an expanding circle map at its absolutely continuous invariant measure, using the differential structure of Gigli. In the Fourier frame that derivative becomes a weighted shift supported on a sublattice of frequencies, a form in which the infinite multiplicity of its eigenvalues, one of the main points of \cite{Klo13}, is immediate. This is carried out in Section \ref{sec:kloeckner}.

The paper is organized as follows. Section \ref{sec:preliminaries} collects the material from optimal transport and from the differential and Riemannian calculus on Wasserstein spaces that the rest of the paper uses. Section \ref{secparallelization} contains the parallelization theorem, the metric coefficients, the Christoffel symbols, the geodesic equation and the curvature formula in the general setting. Section \ref{secExample} specializes everything to the circle. Section \ref{sec:kloeckner} reads in that frame a derivative computed by Kloeckner in \cite{Klo13}.

%%%%%%%%%%%%%%%%%%%%%%%%%%%%%%%%%%%%%%%%%%%%%%%%%%%%%%%%%%%%%%%%%%%%%%%%%%%%%%%%%%%%%%%%%%%%%%%%%%%%%%%%%%%%%%%%%%%%%%%%%%%%%%%%%%%%%%%%%%%%%%%%%%%%%%%%%%

\section{Preliminaries}\label{sec:preliminaries}

This section collects the material on which the rest of the paper is built. We first recall the basic notions of optimal transport theory: the Wasserstein distances, their relation to the weak topology on $P(M)$, and the description of geodesics of $P_2(M)$ by displacement interpolations. We then set up the differential calculus on Wasserstein spaces, defining tangent spaces through the continuity equation and introducing the weighted differential operators $\operatorname{div}_\mu$, $\Delta_\mu$ and $G_\mu$ that make the tangent structure explicit. We close with the Riemannian structure of $P^{ac}(M)$: the metric of Otto, the Lie bracket of constant vector fields and the Levi-Civita connection. The material in this section is well known, but it is scattered across sources that use different conventions and different degrees of generality. We therefore present it in a uniform notation, including proofs whenever our formulation differs from the ones available in the literature.

%%%%%%%%%%%%%%%%%%%%%%%%%%%%%%%%%%%%%%%%%%%%%%%%%%%%%%%%%%%%%%%%%%%%%%%%%%%%%%%%%%%%%%%%%%%%%%%%%%%%%%%%%%%%%%%%%%%%%%%%%%%%

\subsection{The Wasserstein Spaces}\label{secwasserstein}

Let $M$ be a connected, orientable, closed Riemannian manifold with geodesic distance $d$ and volume measure $\vol$, normalized so that $\vol(M)=1$. Let $P(M)$ denote the space of probability measures on $M$ with the weak topology \cite{billingsley2013convergence}, i.e., we say that a sequence $\mu_n$ in $P(M)$ converges to $\mu\in P(M)$ if for every continuous function $f:M\to \mathbb{R}$
$$\int_Mf d\mu_n\xrightarrow{n\to \infty}\int_M fd\mu.$$

If $T:X\to Y$ is a measurable map between measurable spaces $X$ and $Y$, and $\mu$ is a measure on $X$, we define the \textbf{push-forward} of $\mu$ by $T$ as the measure on $Y$ characterized by
$$T_*\mu(A)=\mu(T^{-1}(A)),$$
for every measurable set $A\subset Y$.

Given two measures $\mu,\nu\in P(M)$, we say that $\pi\in P(M\times M)$ is a \textbf{coupling} of $\mu$ and $\nu$ if
$$(\operatorname{proj}_1)_*\pi=\mu \quad \textrm{and} \quad (\operatorname{proj}_2)_*\pi=\nu; $$
with $\operatorname{proj}_1,\operatorname{proj}_2:M^2\to M$ being the natural projections $(x,y)\mapsto x$ and $(x,y)\mapsto y$, respectively.
We denote by $\Pi(\mu,\nu)$ the set of all couplings of $\mu$ and $\nu$, which is non-empty since it contains the product measure $\mu\times\nu$.

Another coupling of fundamental importance to us is the \textit{optimal coupling}, which is given by the following Monge-Kantorovich Theorem (see for instance \cite[Theorem 4.1]{villani2009optimal}). Intuitively, it minimizes the cost of transporting mass, measured in terms of the geodesic distance. It is therefore the tool that relates the geometry of $M$ to the length space structure of $P(M)$ and, consequently, to its geometry.

\begin{theo}\textbf{(Monge-Kantorovich)}
	Take a pair $\mu,\nu\in P(M)$ and let $p\geq 1$. Then the infimum
$$C_p(\mu,\nu):=\inf_{\pi\in\Pi(\mu,\nu)}\int_{M\times M}d^p(x,y)\, d\pi(x,y)$$
is attained. Any $\pi\in\Pi(\mu,\nu)$ attaining it is called an \textbf{optimal coupling} between $\mu$ and $\nu$.
\end{theo}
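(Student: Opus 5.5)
The plan is the direct method of the calculus of variations on $\Pi(\mu,\nu)$: show that the set of couplings is compact in the weak topology of $P(M\times M)$, and that the cost functional $\pi\mapsto\int d^p\,d\pi$ is (lower semi-)continuous on it. A minimizing sequence then has a convergent subsequence whose limit is a coupling attaining the infimum. Since $M$ is closed, every step simplifies considerably compared with the general Polish setting.

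First, $C_p(\mu,\nu)$ is finite and the infimum is over a non-empty set. Indeed $\mu\times\nu\in\Pi(\mu,\nu)$, and $d^p$ is bounded by $\operatorname{diam}(M)^p<\infty$, which holds by compactness of $M$. So we may choose a sequence $\pi_k\in\Pi(\mu,\nu)$ with $\int d^p\,d\pi_k\to C_p(\mu,\nu)$.

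Second, we extract a limit. $M\times M$ is a compact metric space, so by the Riesz representation theorem and Banach--Alaoglu (equivalently, Prokhorov's theorem, with tightness automatic on a compact space) the space $P(M\times M)$ is sequentially compact in the weak topology. Here we use separability of $C(M\times M)$ to pass from weak-$*$ compactness to sequential compactness. Hence, after passing to a subsequence, $\pi_k\to\pi$ weakly for some $\pi\in P(M\times M)$.

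Third, we check that $\Pi(\mu,\nu)$ is closed, so that $\pi$ is a coupling. For $f\in C(M)$, the function $f\circ\operatorname{proj}_1$ is continuous on $M\times M$. Therefore
$$\int f\,d(\operatorname{proj}_1)_*\pi=\lim_k\int f\circ\operatorname{proj}_1\,d\pi_k=\int f\,d\mu,$$
and since continuous functions determine a Borel probability measure on a compact metric space, $(\operatorname{proj}_1)_*\pi=\mu$. The second marginal is handled in the same way.

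Finally, $d^p$ is continuous and bounded on $M\times M$, so the definition of weak convergence gives
$$\int d^p\,d\pi=\lim_k\int d^p\,d\pi_k=C_p(\mu,\nu).$$
Thus $\pi$ attains the infimum. We do not even need lower semicontinuity of the cost here, only its continuity.

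The only step with real content is the compactness of $P(M\times M)$. I would invoke Prokhorov's theorem (or Riesz plus Banach--Alaoglu) as a standard result rather than reprove it. If one wanted to avoid it, a diagonal argument over a countable dense subset of $C(M\times M)$, followed by Riesz representation of the limiting positive normalized functional, gives the same conclusion.
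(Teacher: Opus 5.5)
Your proof is correct, and it is essentially the argument behind the paper's statement: the paper gives no proof of its own and cites \cite[Theorem 4.1]{villani2009optimal}, whose proof is the same direct method (compactness of $\Pi(\mu,\nu)$ via Prokhorov plus semicontinuity of the cost). There it is carried out on a general Polish space, using tightness of $\Pi(\mu,\nu)$ and lower semicontinuity of a cost bounded below; on the closed manifold $M$ you may instead use compactness of $M\times M$ and continuity and boundedness of $d^p$, exactly as you do.
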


\begin{definition}
    The \textbf{Wasserstein distance} of order $p\geq 1$ between two probability measures $\mu$ and $\nu$ on $M$ is defined by
\begin{equation}\label{Wp}
	W_p(\mu,\nu):=\left(\inf_{\pi\in\Pi(\mu,\nu)}\int_{M^2}d(x,y)^p\, d\pi(x,y)\right)^{1/p}=C_p(\mu,\nu)^{1/p}.
\end{equation}
\end{definition}

Straightforward calculations show that $W_p$ is positive definite, symmetric and satisfies the triangle inequality. Moreover, since $M$ is closed its diameter is finite, so $W_p(\mu,\nu)\leq\operatorname{diam}(M)<+\infty$ for every pair $\mu,\nu\in P(M)$ and every $p\geq1$. Thus (\ref{Wp}) indeed defines a distance on all of $P(M)$, and the set underlying the Wasserstein space of order $p$ is $P(M)$ itself. We call $(P(M),W_p)$ the \textbf{Wasserstein space} of order $p$ on $M$, and write $P_p(M)$ for it whenever the metric is relevant. As the next theorem shows, $W_p$ is indeed a metrization of the weak topology.

\begin{theo}\cite[Theorem 6.9]{villani2009optimal}
    If $(\mu_k)_{k\in\mathbb{N}}$ is a sequence in ${P}_p(M)$ and $\mu$ is a measure in ${P}(M)$, then $(\mu_k)_{k\in\mathbb{N}}$ converges to $\mu$ in the weak topology if and only if $W_p(\mu_k,\mu)\xrightarrow[]{k\to\infty}0$.
\end{theo}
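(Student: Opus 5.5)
We prove the two implications separately, using only that $M$ is compact. Set $D=\operatorname{diam}(M)<\infty$; then $d^p\le D^{p-1}d$ and $d\le D^{1-1/p}d^{1/p}$ on $M\times M$. It follows that $W_1\le W_p^{\,p}\cdot D^{1-p}\cdot$ (after rearranging) and $W_p\le D^{(p-1)/p}W_1^{1/p}$. Hence $W_p(\mu_k,\mu)\to 0$ if and only if $W_1(\mu_k,\mu)\to0$, and it suffices to treat $p=1$.

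\emph{($W_1\to0$ implies weak convergence.)} Let $f$ be $L$-Lipschitz and let $\pi_k$ be an optimal coupling of $\mu_k$ and $\mu$, which exists by the Monge-Kantorovich Theorem. Then
$$\Big|\int f\,d\mu_k-\int f\,d\mu\Big|=\Big|\int (f(x)-f(y))\,d\pi_k\Big|\le L\,W_1(\mu_k,\mu)\to0 .$$
On the compact space $M$, Lipschitz functions are uniformly dense in $C(M)$ by Stone--Weierstrass; alternatively one can use the inf-convolutions $f_L(x)=\inf_y\{f(y)+L d(x,y)\}$. Since all the measures have mass one, an $\varepsilon/3$ argument extends the convergence to every continuous $f$.

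\emph{(Weak convergence implies $W_1\to0$.)} This is the main step. I would argue by compactness. By Prokhorov, $P(M)$ is weakly compact, and so is $P(M\times M)$. Suppose $W_1(\mu_k,\mu)\ge\varepsilon$ along a subsequence. Choose couplings cleverly: for a partition of $M$ into Borel sets $A_1,\dots,A_N$ of diameter $<\delta$ with $\mu(\partial A_i)=0$ (possible by choosing radii of balls avoiding the countably many atoms of the boundary masses), the portmanteau theorem gives $\mu_k(A_i)\to\mu(A_i)$. Build a coupling of $\mu_k$ and $\mu$ as follows:
\begin{itemize}
\item inside each cell, move mass $\min(\mu_k(A_i),\mu(A_i))$ at cost at most $\delta$ per unit;
\item transport the remaining mass, of total $\tfrac12\sum_i|\mu_k(A_i)-\mu(A_i)|$, arbitrarily at cost at most $D$ per unit.
\end{itemize}
Concretely, use the product of the normalized restrictions in each cell for the first part, and a product coupling of the leftovers for the second. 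This gives
$$W_1(\mu_k,\mu)\le \delta+D\sum_i|\mu_k(A_i)-\mu(A_i)|\to\delta,$$
and letting $\delta\to0$ yields the contradiction. I expect the only delicate point to be the construction of $\mu$-continuity partitions of small diameter; everything else is bookkeeping.

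Alternatively, Kantorovich--Rubinstein duality together with Arzelà--Ascoli (1-Lipschitz functions vanishing at a base point form a compact set in $C(M)$) turns weak convergence into uniform convergence over that class. I would mention this as a shorter route, but the partition argument avoids invoking duality, which the paper has not stated.
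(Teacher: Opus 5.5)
Your proof is correct apart from one slip in the reduction to $p=1$. For $p>1$ the inequality $W_1\le D^{1-p}W_p^{\,p}$ is false: two Dirac masses at distance $r<D$ give $W_1=r>r^pD^{1-p}$. The pointwise bound $d\le D^{1-1/p}d^{1/p}$ cannot yield any bound in terms of $W_p$ either. What you need is $W_1\le W_p$, which is Jensen's inequality $\int d\,d\pi\le(\int d^p\,d\pi)^{1/p}$ applied to a $W_p$-optimal coupling. Together with $W_p^p\le D^{p-1}W_1$ this gives $W_p\to0\iff W_1\to0$. You could also skip the reduction entirely, since your cell coupling directly gives $W_p^p(\mu_k,\mu)\le\delta^p+\tfrac{D^p}{2}\sum_i|\mu_k(A_i)-\mu(A_i)|$. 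Likewise, the contradiction framing and the appeal to Prokhorov are unnecessary, because the coupling yields $\limsup_kW_1(\mu_k,\mu)\le\delta$ directly.

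The paper does not prove this statement; it quotes Villani's Theorem 6.9. That result is proved for an arbitrary Polish space, where convergence in $P_p$ also requires convergence of $p$-th moments. There, the implication $W_p\to0\Rightarrow$ weak convergence comes from tightness of $W_p$-Cauchy sequences and lower semicontinuity of $W_p$. The converse comes from a compactness argument on optimal plans $\pi_k\in\Pi(\mu_k,\mu)$. A weak limit of these plans is a coupling of $\mu$ with itself. By the stability of optimal transport ($c$-cyclical monotonicity passes to limits) it is optimal, hence supported on the diagonal. Uniform integrability of $d^p$ then lets one pass to the limit in the cost.

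Your route uses the compactness of $M$ essentially. Finite diameter removes every moment issue and makes $W_p$ and $W_1$ comparable, and finite partitions into small $\mu$-continuity cells make an explicit coupling possible. In exchange your argument is elementary and quantitative, and it avoids both the stability theory of optimal plans and Kantorovich duality. Villani's argument buys generality to non-compact spaces, where the moment condition is genuinely part of the statement. For the closed manifolds of the paper, yours is the more self-contained proof.
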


The space $P_p(M)$ is moreover Polish, that is, separable and completely metrizable \cite[Theorem 6.18]{villani2009optimal}.

%%%%%%%%%%%%%%%%%%%%%%%%%%%%%%%%%%%%%%%%%%%%%%%%%%%%%%%%%%%%%%%%%%%%%%%%%%%%%%%%%%%%%%%%%%%%%%%%%%%%

\subsubsection{Geodesics and displacement interpolations}

We now turn to the geodesic structure of $P_2(M)$, whose geodesics are described by the displacement interpolations of optimal transport. We recall their construction, first in the general setting of transport plans and then, for absolutely continuous measures, via transport maps.

\textbf{Displacement interpolations} are curves given by the push-forwards
$$\mu_t:=(e_t)_*\tau;$$
where $\mu_0,\mu_1\in P_2(M)$ are fixed, $e_t:C([0,1],M)\to M$ is the evaluation map $\gamma\mapsto \gamma_t$, and $\tau$ is a probability measure concentrated on the set of minimizing geodesics of $M$ such that $(e_0,e_1)_*\tau$ is an optimal coupling between $\mu_0$ and $\mu_1$. See \cite[chapter 7]{villani2009optimal} for more details. In particular, any two measures in $P_2(M)$ are joined by a displacement interpolation \cite[Corollary 7.22]{villani2009optimal}.

Following McCann's description of the expansion of a gas \cite{mccann1997aconvexity}, one may read $\mu_t$ as a gas whose particles travel along geodesics of $M$, with $\tau$ the law of those trajectories. For this reason $\tau$ is also called a \emph{random geodesic} on $M$. 

Since tangent spaces and cones at singular measures are not simple to describe---see \cite{lott2017ontangent} and \cite[Appendix A]{lott2009ricci}---let alone to compute with, we hereafter restrict our analysis to the space $P^{ac}(M)$ of measures absolutely continuous with respect to $\vol$. From Subsection \ref{sec1} onwards we restrict further to the dense subspace
$$P^{\infty}(M)=\left\{\rho\vol\colon\, \rho\in C^\infty(M),\ \rho> 0,\ \int_M\rho\,\vol=1\right\},$$
whose positivity and smoothness of the density are what the constructions below require. In the absolutely continuous setting there is a better characterization of displacement interpolations, for which the following definition is needed.

\begin{definition} 

A function $\psi:M\to\mathbb{R}\cup\{+\infty\}$ is said to be \textbf{$d^2/2$-convex} if it is not identically $+\infty$ and there is a function $\phi:M\to\mathbb{R}\cup\{\pm\infty\}$ such that for every $x\in M$, $\psi(x)=\sup_{y\in M}\{\phi(y)-d^2(x,y)/2\}$.

\end{definition}

Given $\mu_0,\mu_1\in P^{ac}(M)$, there is a \emph{$d^2/2$-convex} function $\psi$, finite and differentiable $\mu_0$-almost everywhere, such that the displacement interpolation between them is given by
$$\mu_t=(T_t)_*\mu_0,\qquad T_t(x)=\exp_x(t\nabla \psi(x)),$$
where $\nabla\psi$ denotes the gradient of $\psi$.

Furthermore, by \cite[Theorem 7.21]{villani2009optimal}, the displacement interpolations from $\mu_0$ to $\mu_1$ are precisely the curves minimizing the action
$$\mathbb{A}(\mu_t)=\sup_{n\in\mathbb{N}}\,\sup_{0=t_0<t_1<\cdots<t_n=1}\sum_{i=0}^{n-1}\frac{W_2(\mu_{t_i},\mu_{t_{i+1}})^2}{t_{i+1}-t_i}$$
among all curves in $P_2(M)$ joining $\mu_0$ to $\mu_1$, and the minimum equals $W_2(\mu_0,\mu_1)^2$. In particular they are constant speed geodesics of $(P_2(M),W_2)$.

This is the structure on which the differential calculus of the next subsection is built.

%%%%%%%%%%%%%%%%%%%%%%%%%%%%%%%%%%%%%%%%%%%%%%%%%%%%%%%%%%%%%%%%%%%%%%%%%%%%%%%%%%%%%%%%%%%%%%%%%%%%%%%%%%%%%%%%%%%%%%%%%%%%%%%%%%%%%%%%%%%%%%%%%%%%%%%%%%%%%%%%%%%%%%%%%%%%%%%%%%%%%%%%%%%%%%%%%%%%%%%%%%%%%%%%%%%%%%%%%%%%

\subsection{Differential Calculus on Wasserstein spaces}\label{sec1}

Here we fix the intrinsic framework for the differential geometry of Wasserstein spaces that will be used in the sequel. We recall the definition of tangent spaces via derivatives of absolutely continuous curves and the associated continuity equation, which links curves of measures to vector fields on the underlying manifold $M$. We then use the $L^2(\mu)$ structure to define differential operators on $M$ that describe the tangent space of $P(M)$ at $\mu$, in a similar manner to what was done in \cite{GangboKimPacini2011} for $P(\mathbb{R}^n)$, and we single out the class of curves with constant velocity field.

%In this section we introduce a compendium on differential calculus on Wasserstein spaces of Riemannian manifolds.

%%%%%%%%%%%%%%%%%%%%%%%%%%%%%%%%%%%%%%%%%%%%%%%%%%%%%%%%%%%%%%%%

\subsubsection*{Tangent Vectors and Derivatives of Curves}

As mentioned in the introduction, $P(M)$ carries a formal Riemannian structure whose length structure is the one just presented. This structure was introduced formally by Otto \cite{otto2001geometry}, and Lott and Villani made it rigorous by computing the tangent cones of $P_2(M)$ at absolutely continuous measures \cite[Appendix A]{lott2009ricci}. The tangent spaces are also described in \cite[Chapter 8]{ambrosio2005gradient}.

Here we present these tangent spaces and their notions of derivatives, with an approach based on \cite{lott2006some} and \cite{ding2021geometry}. 

A curve $\mu_t$ on $P(M)$ is said to be \emph{absolutely continuous in $L^2$} if there is $k\in L^2([0,1])$ such that $W_2(\mu_{t_1},\mu_{t_2})\leq \int_{t_1}^{t_2} k(s)\,ds$ for $t_1<t_2$. This is not to be confused with absolute continuity of a measure with respect to another. Ambrosio \cite[Theorem 8.3.1]{ambrosio2005gradient} defined a notion of derivative for such curves, which we state below.

\begin{theo}\label{theo1.1}
    Let $\mu_t$ be an absolutely continuous curve on $P(M)$, with $\mu_0$ absolutely continuous with respect to the volume measure. Then there exists a Borel vector field $Z_t$ on $M$ such that
    $$\int_0^1\int_M\norm{Z_t(x)}^2_{T_xM}d\mu_tdt<+\infty$$
    and the following conservation of mass formula 
    \begin{equation}
        \label{conservation} \dot{\mu_t}+\nabla\cdot (\mu_t Z_t)=0
    \end{equation}
    holds in the sense of distributions. Uniqueness to (\ref{conservation}) holds if moreover $Z_t$ is imposed to be in
    $$\overline{\{\nabla\psi,\,\psi\in C^\infty(M)\}}^{L^2(\mu_t)}.$$
    
\end{theo}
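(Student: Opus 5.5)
We follow the strategy of Ambrosio--Gigli--Savar\'e \cite[Theorem 8.3.1]{ambrosio2005gradient}: build $Z_t$ as the Radon--Nikodym derivative of a limit of time-discretized transport momenta, and get uniqueness from a Hilbert-space projection in $L^2(\mu_t)$. Throughout we may assume, after reparametrizing by arc length, that $\mu_t$ is Lipschitz. The metric derivative $|\dot\mu_t|=\lim_{h\to0}W_2(\mu_{t+h},\mu_t)/|h|$ then exists for a.e.\ $t$, satisfies $|\dot\mu_t|\le k(t)$, and so lies in $L^2([0,1])$.

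\emph{Existence.} Fix $h>0$ and, for each $t$, an optimal coupling $\pi_{t,h}\in\Pi(\mu_t,\mu_{t+h})$ given by the Monge--Kantorovich theorem. Let $\gamma^{x,y}$ be a measurable selection of minimizing geodesics on $M$. Define the vector-valued measures $\nu^h$ on $[0,1]\times TM$ by transporting $\pi_{t,h}\,dt$ to the initial velocities $\dot\gamma^{x,y}(0)/h\in T_xM$. For $\varphi\in C^\infty(M)$ a Taylor expansion along $\gamma^{x,y}$ gives
\[
\frac{1}{h}\Big(\int\varphi\,d\mu_{t+h}-\int\varphi\,d\mu_t\Big)=\frac1h\int \langle\nabla\varphi(x),\dot\gamma^{x,y}(0)\rangle\,d\pi_{t,h}+O\!\Big(\frac{\|\varphi\|_{C^2}W_2^2(\mu_t,\mu_{t+h})}{h}\Big).
\]
The error is $O(h)$, because $W_2\le Lh$. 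The momenta $\int|\dot\gamma(0)|^2/h^2\,d\pi_{t,h}\,dt=\int W_2^2(\mu_t,\mu_{t+h})/h^2\,dt$ are uniformly bounded. Since $M$ is compact, we can extract a weak limit of the projected momenta, which are vector measures $E^h$ on $[0,1]\times M$ with $E^h\ll\mu_t\,dt$ uniformly in energy. Lower semicontinuity of the Benamou--Brenier functional $\int|dE|^2/d\mu$ then gives $E=Z_t\,\mu_t\,dt$ with $\int_0^1\int_M|Z_t|^2\,d\mu_t\,dt\le\int_0^1|\dot\mu_t|^2\,dt<\infty$. Passing to the limit in the Taylor identity tested against $\eta(t)\varphi(x)$ yields (\ref{conservation}) in the sense of distributions.

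\emph{Uniqueness.} The set of velocity fields solving (\ref{conservation}) for the given curve is an affine space: two solutions differ by some $w_t$ with $\nabla\cdot(\mu_t w_t)=0$. This condition says exactly that $w_t\perp\nabla\psi$ in $L^2(\mu_t)$ for all $\psi\in C^\infty(M)$, for a.e.\ $t$, using a countable dense set of $\psi$ to handle the null sets in $t$. Hence if two solutions both lie in $\overline{\{\nabla\psi\}}^{L^2(\mu_t)}$, their difference lies in that closure and is orthogonal to it, so it vanishes. Existence of a solution in the closure follows by replacing $Z_t$ with its orthogonal projection onto the closure. The projection still solves (\ref{conservation}), since the removed part is divergence-free, and it does not increase $\|Z_t\|_{L^2(\mu_t)}$. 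Measurability in $t$ of the projection is checked via the countable dense family.

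\emph{Main obstacle.} The delicate step is the compactness and lower-semicontinuity argument identifying the limit momentum as $Z_t\mu_t$ with finite energy. Two difficulties arise on a manifold: geodesics are selected measurably but not continuously, and the velocities live in varying tangent spaces. We would handle both by embedding $M$ isometrically in some $\mathbb{R}^N$ (Nash) and working with $\mathbb{R}^N$-valued measures. Replacing $\dot\gamma(0)$ by $y-x$ changes things only at order $d^2$, which is negligible as above. Finally, the limit field is tangent because it pairs only with gradients of functions on $M$. Note that the hypothesis that $\mu_0$ is absolutely continuous is not needed for existence. It matters only for the later identification of the tangent space.
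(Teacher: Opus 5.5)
Your argument is correct, but it takes a different route from the one behind the statement. The paper gives no proof and simply quotes \cite[Theorem 8.3.1]{ambrosio2005gradient}, where existence comes from duality rather than compactness. There, after the same reduction to a Lipschitz curve, optimal couplings are used only to prove $\bigl|\frac{d}{dt}\int_M\varphi\,d\mu_t\bigr|\leq|\dot\mu_t|\,\norm{\nabla\varphi}_{L^2(\mu_t)}$. This makes $\nabla_x\varphi\mapsto-\int_0^1\int_M\partial_t\varphi\,d\mu_t\,dt$ a bounded linear functional on gradients in $L^2(\mu_t\,dt)$, and its Riesz representative in the closure of those gradients is the velocity field. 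That argument gives two things at once. Its slices lie in the closure of gradients, by Fubini and a subsequence, so no a posteriori projection or measurability check is needed. It also gives the pointwise bound $\norm{Z_t}_{L^2(\mu_t)}\leq|\dot\mu_t|$. Your argument gives only the integrated bound. In exchange it is constructive, exhibiting $Z$ as a weak limit of barycentric difference quotients of optimal couplings, and it runs intrinsically on $M$, whereas the cited theorem is formulated in a linear ambient space. Your uniqueness argument is the standard one.

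Two of your precautions can be dropped. First, the first marginal of $\pi_{t,h}$ is $\mu_t$, so the projected momenta have densities $\bar v^h_t(x)=\int h^{-1}\dot\gamma^{x,y}(0)\,d\pi^x_{t,h}(y)\in T_xM$. By Jensen these are bounded in the single Hilbert space $L^2([0,1]\times M,\mu_t\,dt;TM)$, and weak compactness there replaces the Benamou--Brenier argument. Neither a Nash embedding nor any continuity of the geodesic selection is then needed, only measurable selections of geodesics and of the couplings in $t$. Second, $k\in L^2$ already gives $W_2^2(\mu_t,\mu_{t+h})\leq h\int_t^{t+h}k^2$. Hence the energy bound and the vanishing of the integrated Taylor error hold without reparametrizing. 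This also spares you the time-rescaling of the continuity equation that your reduction requires but never carries out.
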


By sense of distributions in (\ref{conservation}) we mean that for every compactly supported $\varphi\in C^1(M)$,
$$\int_M\varphi\nabla\cdot(\mu Z_t)=-\int_M\langle Z_t, \nabla\varphi\rangle d\mu.$$

So, a differentiable curve $\mu_t$ on $P(M)$ may be written, in the sense of distributions, as
$$\mu_t=\mu_0+t\dot{\mu}_0+o(t).$$

We say that $\dot{\mu}=-\nabla\cdot(\mu_tZ_t)$ is a tangent vector of $P(M)$ at $\mu$, and we define the \textbf{tangent space} of $P(M)$ at $\mu$, denoted $T_\mu P(M)$, as the set of all such tangent vectors.

We now see how a tangent vector $X\in T_\mu P(M)$ acts via the G\^ateaux derivative on functions $F$ by
\begin{equation}\label{directionalderivative}
    (X\cdot F)(\mu)=\left.\frac{d}{dt}\right|_{t=0}F\left(\mu_t\right),
\end{equation}
with $\mu_t$ being any curve in $P(M)$ such that $\mu_0=\mu$ and $\dot{\mu}_0=X$; the expansion $\mu_t=\mu_0+t\dot{\mu}_0+o(t)$ shows that the derivative does not depend on the choice of $\mu_t$. When it exists, we call $X\cdot F(\mu)$ the \textbf{directional derivative} of $F$ at $\mu$ along $X$, and we say that $F$ is \textbf{differentiable} at $\mu$ if it has directional derivatives along every $X\in T_\mu P(M)$.

\subsubsection*{Differential Operators}

As expected, since $\nabla\cdot(\mu\nabla\psi)$ is the divergence of a gradient, it can also be interpreted as a \textbf{Laplace operator} in the $\mu$ weighted $L^2$-space, namely $L^2(M,\mu)$, and in the $L^2$-forms weighted space $\Omega^1_{L^2}(M,\mu)$. Indeed, let $d$ be the usual differential on functions and $d_\mu^*$ be its formal adjoint, then
  $$\nabla\cdot(\mu\nabla\psi)=-(d_\mu^*d\psi)\mu=(\Delta_\mu \psi )\mu;$$
 in which $\Delta_\mu=-d^*_\mu d$ on smooth functions. With this convention $\Delta_\mu$ is the negative semi-definite weighted Laplacian, so that $\Delta_{\vol}$ is the Laplace-Beltrami operator of $M$.
 We highlight that this shows that $T_\mu P^\infty(M)$ is in a one-to-one relation with the image of $\Delta_\mu$, so the Hodge decomposition theorem allows one to analyze projections of forms onto this space. To do so, we denote by $G_\mu$ the Green's operator for $d^*_\mu d$ on $L^2(M,\mu)$. That is, if $\int_M f\,d\mu=0$ and $\phi=G_\mu f$, then $\phi$ is the unique solution of $-\Delta_\mu\phi=f$ satisfying $\int_M\phi\,d\mu=0$. In particular $G_\mu 1=0$.

If $d\mu$ is a volume form, we define the \textbf{divergence} $\operatorname{div}_\mu(\xi)$ of $\xi \in T_\mu P(M)$ via
\begin{equation}
    \label{divmu}
    \nabla\cdot(\mu \xi)=\operatorname{div}_\mu(\xi)d\mu.
\end{equation}

If we denote the divergence with respect to the volume measure $\vol$ simply by $\operatorname{div}$, we have, for $d\mu=\rho\,d\vol$, 
$$\operatorname{div}_\mu(\xi)=\rho^{-1}\operatorname{div}(\rho\xi)=\operatorname{div}(\xi)+\rho^{-1}\langle\nabla\rho,\xi\rangle.$$
In particular,
$$\Delta_\mu\psi=\Delta_M\psi+\rho^{-1}\langle\nabla \rho,\nabla\psi \rangle;$$
for every $\psi\in C^\infty(M)$, in which $\Delta_M$ denotes the Laplacian of $M$.

%%%%%%%%%%%%%%%%%%%%%%%%%%%%%%%%%%%%%%%%%%%%%%%%%%%%%%%%%%%%%%%%%%%%%%%%%%%%%%%

\subsubsection*{On Tangent Spaces}

Here we further discuss the Hilbert space structure of the tangent spaces of $P(M)$.

Set
$$L^2_0(\mu):=\overline{\{\nabla\psi:\,\psi\in C^\infty(M)\}}^{L^2(\mu)}.$$

So that, if $\mu_t$ is a curve with derivative $-\nabla\cdot (Z\mu)$ at $\mu=\mu_0$, we may rewrite it in terms of the divergence operator with respect to $\mu$, via $$\dot{\mu}_0=-\operatorname{div}_\mu(Z)\mu.$$

So the tangent space of $P(M)$ at $\mu$ may be described as
$$T_\mu P(M):=\left\{-\operatorname{div}_\mu(Z)\mu:Z\in L^2_0(\mu)\right\}.$$

Integrating by parts gives $\int_M u\,\Delta_\mu u\,d\mu=-\int_M|\nabla u|^2d\mu$, so $\Delta_\mu u=0$ if and only if $du=0$. Hence $\ker(\Delta_\mu)$ consists of the constants, and its orthogonal complement in $L^2(\mu)$ is the set of functions with vanishing $\mu$-mean. Therefore $f\mu\in T_\mu P(M)$ if and only if $\int_M f\,d\mu=0$.

Furthermore, if we consider the distribution space $D(\mu):=\{Z\mu: Z\in L^2(\mu)\}$ then the map 
\begin{align*}
    V_\mu: & L ^2(\mu) \to  D(\mu)\\
    & Z \mapsto -\operatorname{div}_\mu(Z)\mu
\end{align*} is an isomorphism between $L^2_0(\mu)$ and $T_\mu P(M)$.

For simplicity, given $\phi\in C^\infty(M)$ we write
 $$V_\phi|_\mu:=V_{\mu}(\nabla \phi).$$
 When there is no risk of ambiguity, we simply denote $V_\phi|_\mu$ by $V_\phi$.

%%%%%%%%%%%%%%%%%%%%%%%%%%%%%%%%%%%%%%%%%%%%%%%%%%%%%%%%%%%%%%%%%%%%%%%%%%%%%%%%%%%%%%%%%%%%%%%%%%%%%%%%%%%%%%%%%%%%%%%%%%%%%%%%%%%%%%%%%%%%%%%%%%%%%%%%%%%%%%%%%%%%%%%%%%%%

\subsubsection*{Curves with constant velocity field}

We now single out a family of curves whose velocity field is represented by the same potential at every point. Given $\psi\in C^\infty(M)$, we call $V_\psi$ a \emph{constant} vector field, meaning that the same $\psi\in C^\infty(M)$ represents it at each tangent space $T_\mu P(M)$, regardless of $\mu$. These curves are not geodesics, but they are convenient for computing the action of a tangent vector on differentiable functions on $P^\infty(M)$.

\begin{prop}\label{prop:constant-field}
	Fix $\psi\in C^\infty(M)$ and denote by $\phi_t:M\to M$ the flow of $\nabla\psi$. Then, for any $\mu_0\in P(M)$, the curve $\mu_t:=(\phi_t)_*\mu_0$ has constant velocity field, that is, $\dot{\mu_t}=V_\psi|_{\mu_t}$ for every $t$.
\end{prop}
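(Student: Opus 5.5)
The plan is to verify the continuity equation directly in the distributional sense used after Theorem \ref{theo1.1}. Fix a test function $\varphi\in C^1(M)$; since $M$ is closed, every such function is compactly supported. Because $\nabla\psi$ is a smooth vector field on a closed manifold, its flow $\phi_t$ is a globally defined one-parameter group of diffeomorphisms with $\frac{d}{dt}\phi_t(x)=\nabla\psi(\phi_t(x))$. By the definition of the push-forward,
$$\int_M\varphi\,d\mu_t=\int_M\varphi\circ\phi_t\,d\mu_0 .$$

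Next I differentiate in $t$. The integrand $\varphi(\phi_t(x))$ is $C^1$ in $t$ with derivative $\langle\nabla\varphi(\phi_t(x)),\nabla\psi(\phi_t(x))\rangle$. This derivative is bounded uniformly in $x$ and $t$ by $\|\nabla\varphi\|_\infty\|\nabla\psi\|_\infty$, so dominated convergence allows differentiation under the integral against the probability measure $\mu_0$. Pushing forward again gives
$$\frac{d}{dt}\int_M\varphi\,d\mu_t=\int_M\langle\nabla\varphi,\nabla\psi\rangle\,d\mu_t=-\int_M\varphi\,\nabla\cdot(\mu_t\nabla\psi),$$
where the last equality is exactly the definition of $\nabla\cdot(\mu_t\nabla\psi)$ as a distribution. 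Hence $\dot\mu_t+\nabla\cdot(\mu_t\nabla\psi)=0$ holds distributionally for every $t$, with velocity field $Z_t=\nabla\psi$.

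It remains to identify this with $V_\psi|_{\mu_t}$. The field $Z_t=\nabla\psi$ lies in $L^2_0(\mu_t)$ trivially, being the gradient of a smooth function. So by the uniqueness clause of Theorem \ref{theo1.1} it is the distinguished velocity field. The tangent vector is then $-\nabla\cdot(\mu_t\nabla\psi)=-\operatorname{div}_{\mu_t}(\nabla\psi)\mu_t=V_{\mu_t}(\nabla\psi)=V_\psi|_{\mu_t}$, where I use \eqref{divmu} when $\mu_t$ has a density. When $\mu_0$ is arbitrary, the same identity holds with both sides read as distributions.

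The curve is also absolutely continuous, as Theorem \ref{theo1.1} implicitly requires. To check this, couple $\mu_s$ and $\mu_t$ via $(\phi_s,\phi_t)_*\mu_0$. This gives $W_2(\mu_s,\mu_t)\le\|\nabla\psi\|_\infty|t-s|$, since each point travels along a curve of length at most $\|\nabla\psi\|_\infty|t-s|$.

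I expect no genuine obstacle. The only point needing care is the interpretation for singular $\mu_0$, where $\operatorname{div}_{\mu}$ is not defined pointwise. I would handle it by stating the conclusion at the level of distributions, where the computation above is complete.
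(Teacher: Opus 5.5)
Your argument is correct and is the same as the paper's: push the test function through the flow, differentiate under the integral against $\mu_0$, push forward again, and read off $-\nabla\cdot(\mu_t\nabla\psi)$ from the distributional definition. The extra steps you add (dominated convergence and the $W_2$-Lipschitz bound) are harmless refinements. The appeal to the uniqueness clause of Theorem \ref{theo1.1} is unnecessary, since $V_\psi|_{\mu_t}$ is by definition $-\nabla\cdot(\mu_t\nabla\psi)$; that theorem also assumes $\mu_0$ absolutely continuous, so it would not cover singular $\mu_0$ in any case.
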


\begin{proof}
	Since $\mu_t=(\phi_t)_*\mu_0$ and $\phi_t$ is the flow of $\nabla\psi$,
	\begin{align*}
		\frac{d}{dt}\int_Mf(x)\mu_t(dx)&=\frac{d}{dt}\int_Mf(\phi_{t}(x))\mu_0(dx)=\int_M \langle\nabla f(\phi_t(x)),\nabla\psi(\phi_t(x))\rangle \mu_0(dx)\\
		&=\int_M \langle\nabla f(x), \nabla\psi(x)\rangle \mu_t(dx)=-\int_Mf\,\nabla\cdot(\mu_t \nabla\psi)(dx),
	\end{align*}
	so that $\dot{\mu_t}=V_{\psi}|_{\mu_t}$.
\end{proof}

This family admits a description in terms of the natural representation of $\operatorname{Diff}(M)$ on $C^0(M)$ induced by the cocycle of push-forward densities, a point of view we intend to explore elsewhere.

%%%%%%%%%%%%%%%%%%%%%%%%%%%%%%%%%%%%%%%%%%%%%%%%%%%%%%%%%%%%

\subsubsection*{Tangent Spaces and Smooth Functions on \texorpdfstring{$P^\infty(M)$}{P(M)}}

Here we analyse the differential structure of $P^\infty(M)$.

Following \cite{lott2006some}, we consider on the dense subset $P^\infty(M)$ introduced in Subsection \ref{secwasserstein} the space of fields
\begin{equation}\label{tangentspace}
	\widetilde{T}_\mu P^\infty(M)=\{V_\psi:\,\psi\in C^\infty(M)\},
\end{equation}
where $P^\infty(M)$ carries the smooth structure of \cite{kriegl1997convenient} rather than the one induced by the inclusion $P^\infty(M)\subset P(M)$, so that $\psi\mapsto V_\psi$ induces an isomorphism $C^{\infty}(M)/\mathbb{R}\to \widetilde{T}_\mu P^\infty(M)$. Observe that if $\operatorname{div}_\mu(X)=0$ and $T_t$ is the map $x\mapsto \exp_x(tX_x)$, then
$$\left.\frac{d}{dt}\right|_{t=0}(T_t)_*\mu=0,$$
so that the directions of the vector fields $X$ with $\operatorname{div}_\mu(X)=0$ do not move $\mu$ to first order, and only gradients need to be considered. We therefore define the \textbf{tangent space} of $P^\infty(M)$ at $\mu$ as
$$T_\mu P^\infty(M):=\widetilde{T}_\mu P^\infty(M)=\{V_\psi:\,\psi\in C^\infty(M)\}\cong C^\infty(M)/\mathbb{R}.$$

Following \cite{ding2021geometry}, we now fix a dense subset of $C^\infty(P^\infty(M))$ on which derivations can be computed explicitly. 

Given $\varphi\in C^\infty(M)$ we define the smooth function $F_\varphi\in C^\infty(P(M))$ via
$$F_\varphi(\mu)=\int_M \varphi(x)\mu(dx).$$
These functions separate the points of $P_2(M)$, since two measures agreeing against every $\varphi\in C^\infty(M)$ agree against every continuous function by density, and direct calculations show that 
$$V_\psi F_\varphi(\mu)=\langle V_\varphi,V_\psi\rangle_{\mu}=\int_M\langle\nabla\varphi(x),\nabla\psi(x)\rangle\mu(dx)=F_{\langle\nabla\varphi,\nabla\psi\rangle}(\mu).$$

A \emph{polynomial} function on $P(M)$ is a function of the form
$$F=F_{\varphi_1}\cdots F_{\varphi_k}$$
for $k\in\mathbb{N}$ and $\varphi_j\in C^\infty (M)$ for $1\leq j\leq k$. By the Stone--Weierstrass theorem, the space of polynomial functions is dense in the space of continuous functions on $P(M)$. For a more detailed reading we recommend \cite[Section 1.2]{ding2021geometry}.

%if $\mu_t$ is a curve with derivative $\dot{\mu}_t=X_t$, we have that 
%\begin{equation}
%\label{repdiff}
%\dot{\mu}_t=\frac{1}{2}U(X_t)^*\mu_t.
%\end{equation}

\subsubsection*{On Derivatives of Smooth Curves on \texorpdfstring{$P^{\infty}(M)$}{P(M)}}

Here we highlight an important basic relation on derivatives of smooth curves on $P^\infty(M)$.

If $\mu_t=\rho_t\,d\vol$ is a path on $P^\infty(M)$ whose derivative field is given by $\nabla\psi_t\in T_{\mu_t}P^\infty(M)$, then $$\dot{\mu_t}=-\nabla\cdot(\mu_t\nabla\psi_t)=-\operatorname{div}_{\mu_t}(\nabla\psi_t)\mu_t=-\operatorname{div}_{\rho_t d\vol }(\nabla\psi_t)\rho_t d\vol =-\operatorname{div}(\rho_t\nabla\psi_t)d\vol.$$

So, on the one hand we have
$$\frac{d}{dt}\int_M f \mu_t= -\int_M f\Delta_{\mu_t}\psi_td\mu_t= -\int_M f\operatorname{div}(\rho_t\nabla\psi_t)d\vol.$$
On the other,
$$\frac{d}{dt}\int_M f \mu_t= \frac{d}{dt}\int_M f \rho_t d\vol =\int_M f \dot{\rho}_t d\vol.$$
Therefore, almost everywhere,
\begin{equation}\label{derivativelaplace}
	\dot{\rho_t}d\vol =-\operatorname{div}(\rho_t\nabla\psi_t)d\vol =-\Delta_{\mu_t}\psi_t d\mu_t.
\end{equation}

%%%%%%%%%%%%%%%%%%%%%%%%%%%%%%%%%%%%%%%%%%%%%%%%%%%%%%%%%%%%%%%%%%%%%%%%%%%%%%%%%%%%%%%%%%%%%%%%%%%%%%%%%%%%%%%%%%%%%%%%%%%%%%%%%%%%%%%%%%%%%%%%%%%%%%%%%%%%%%%%%%%%%%%%%%%%%%%%%%%%%%%%%%%%%%%%%%%%%%%%%%%%%%%%%%%%%%%%%%%%%%%%%%%%%%%%%%%%%%%%%%%%%%%%%%%%%%%%%%%%%%%%%%%%%%%%%%%%%%%%%%%%%%%%%%%%%%%%%%%%
%%%%%%%%%%%%%%%%%%%%%%%%%%%%%%%%%%%%%%%%%%%%%%%%%%%%%%%%%%%%%%%%%%%%%%%%%%%%%%%%%%%%%%%%%%%%%%%%%%%%%%%%%%%%%%%%%%%%%%%%%%%%%%%%%%%%%%%%%%%%%%%%%%%%%%%%%%%%%%%%%%%%%%%%%%%%%%%%%%%%%%%%%%%%%%%%%%%%%%%%%%%%%%%%%%%%%%%%%%%%%%%%%%%%%%%%%%%%%%%%%%%%%%%%%%

\subsection{Riemannian geometry}\label{sec2}

Here we present Otto's Riemannian metric on $P^{ac}(M)$, the Lie bracket of constant vector fields and, via the Koszul formula, the Levi-Civita connection. These objects were computed by Lott on $P^\infty(M)$ in local coordinates \cite{lott2006some} and by Ding and Fang on $P_2(M)$ \cite{ding2021geometry}. The expressions below record them in the coordinate-free notation of the previous subsection, which is the form used in the sequel.

\subsubsection*{Riemannian Metric}

Otto's Riemannian metric on $P(M)$ at a point $\mu=\rho\,d\vol$ is given, for $V_\mu(X),V_\mu(Y)\in T_\mu P(M)$, by
\begin{equation}\label{ottoprod}
	\langle V_\mu(X),V_\mu(Y)\rangle_\mu=\int_M\langle X,Y\rangle\, d\mu.
\end{equation}
By construction, $V_\mu$ is then an isometry from $L^2_0(\mu)$ onto $T_\mu P(M)$. For $X=\nabla\phi_1$ and $Y=\nabla\phi_2$ with $\phi_1,\phi_2\in C^\infty(M)$, integration by parts gives
\begin{equation}\label{ottopotential}
	\langle V_{\phi_1},V_{\phi_2}\rangle_\mu=\int_M\langle \nabla\phi_1,\nabla\phi_2\rangle\, d\mu=-\int_M \phi_1\,\nabla\cdot(\mu \nabla\phi_2)=-\int_M \phi_1\,\Delta_\mu\phi_2\, d\mu.
\end{equation}

 In local coordinates, using the musical isomorphisms $\flat:TM\to T^*M$ to define $\nabla^i=g^{ij}\nabla_i$ (Einstein's notation is being used) with $g^{ij}$ being the matrix coefficients of the Riemannian metric of $M$ and $\nabla_i$ being the covariant derivative in the $i$-th coordinate direction, we have that, by \cite[Equation 2.6]{lott2006some},
 $$\Delta_\mu\psi\,d\mu=\nabla^i(\rho\nabla_i\psi)\,d\vol,$$ 
and thus
$$\nabla\cdot(\mu\nabla\phi_2)=\nabla^i(\rho\nabla_i\phi_2)\,d\vol.$$
Therefore, by (\ref{ottopotential}),
$$\langle V_{\phi_1},V_{\phi_2}\rangle_\mu=-\int_M\phi_1\,\nabla^i(\rho\nabla_i\phi_2)\,d\vol.$$

\subsubsection*{Lie Bracket and Levi-Civita Connection}

\begin{prop}\label{prop:lie-bracket}
    Given $\phi_1,\phi_2\in C^\infty(M)$, the Lie bracket of constant vector fields is given by
    $$[V_{\phi_1},V_{\phi_2}]=V_{\theta_\mu}$$
    with
    $$\theta_\mu=\Delta_\mu^{-1}\operatorname{div}_\mu(\nabla\phi_1\Delta_\mu\phi_2-\nabla\phi_2\Delta_\mu\phi_1)=G_\mu d_\mu^*\flat(\nabla^2\phi_2(\nabla\phi_1)-\nabla^2\phi_1(\nabla\phi_2)).$$
\end{prop}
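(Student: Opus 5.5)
The plan is to identify the bracket through its action on the test functions $F_\varphi$. A tangent vector $V_\theta|_\mu$ is determined, modulo constants in $\theta$, by the numbers $V_\theta F_\varphi(\mu)=\int_M\langle\nabla\varphi,\nabla\theta\rangle\,d\mu$ for $\varphi\in C^\infty(M)$. So it suffices to compute
$$[V_{\phi_1},V_{\phi_2}]F_\varphi=V_{\phi_1}(V_{\phi_2}F_\varphi)-V_{\phi_2}(V_{\phi_1}F_\varphi),$$
and then to write the result in the form $\int_M\langle\nabla\varphi,\nabla\theta_\mu\rangle\,d\mu$.

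\textbf{Step 1: the bracket on test functions.} From the identity $V_\psi F_\varphi=F_{\langle\nabla\varphi,\nabla\psi\rangle}$ recorded above, we get $V_{\phi_2}F_\varphi=F_{\langle\nabla\varphi,\nabla\phi_2\rangle}$, which is again a linear test function. Differentiating it along the constant-field curve of Proposition \ref{prop:constant-field} generated by $\phi_1$ gives
$$V_{\phi_1}V_{\phi_2}F_\varphi=\int_M\big\langle\nabla\langle\nabla\varphi,\nabla\phi_2\rangle,\nabla\phi_1\big\rangle\,d\mu=\int_M\Big(\nabla^2\varphi(\nabla\phi_1,\nabla\phi_2)+\big\langle\nabla\varphi,\nabla^2\phi_2(\nabla\phi_1)\big\rangle\Big)\,d\mu.$$
The Hessian of $\varphi$ is symmetric, so this term cancels after antisymmetrizing. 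What remains is
$$[V_{\phi_1},V_{\phi_2}]F_\varphi=\int_M\langle\nabla\varphi,W\rangle\,d\mu,\qquad W=\nabla^2\phi_2(\nabla\phi_1)-\nabla^2\phi_1(\nabla\phi_2)=[\nabla\phi_1,\nabla\phi_2].$$
The last equality uses that the Levi-Civita connection of $M$ is torsion-free.

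\textbf{Step 2: projecting $W$ onto gradients.} Integrating by parts in $L^2(\mu)$ gives $\int_M\langle\nabla\varphi,W\rangle\,d\mu=\int_M\varphi\,d_\mu^*\flat W\,d\mu$. The function $d^*_\mu\flat W$ has $\mu$-mean zero, so $\theta_\mu:=G_\mu d_\mu^*\flat W$ is well defined and satisfies $-\Delta_\mu\theta_\mu=d^*_\mu\flat W$. Integrating by parts once more yields
$$\int_M\langle\nabla\varphi,\nabla\theta_\mu\rangle\,d\mu=\int_M\varphi\,d^*_\mu\flat W\,d\mu.$$
Since $\varphi$ is arbitrary, we conclude $[V_{\phi_1},V_{\phi_2}]=V_{\theta_\mu}$ with the second expression in the statement. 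In other words, $\nabla\theta_\mu$ is the $L^2(\mu)$-orthogonal projection of $W$ onto $L^2_0(\mu)$.

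\textbf{Step 3: the first expression.} Apply the Lie derivative identity $\mathcal L_{[X,Y]}\mu=[\mathcal L_X,\mathcal L_Y]\mu$ together with $\mathcal L_X\mu=\operatorname{div}_\mu(X)\mu$. Expanding $\mathcal L_X(\operatorname{div}_\mu(Y)\mu)$, the product terms $\operatorname{div}_\mu X\operatorname{div}_\mu Y$ cancel on antisymmetrization, leaving
$$\operatorname{div}_\mu[X,Y]=X(\operatorname{div}_\mu Y)-Y(\operatorname{div}_\mu X).$$
With $X=\nabla\phi_1$ and $Y=\nabla\phi_2$ this reads
$$\operatorname{div}_\mu W=\langle\nabla\phi_1,\nabla\Delta_\mu\phi_2\rangle-\langle\nabla\phi_2,\nabla\Delta_\mu\phi_1\rangle.$$
On the other hand, the product rule $\operatorname{div}_\mu(fX)=f\operatorname{div}_\mu X+\langle\nabla f,X\rangle$ gives
$$\operatorname{div}_\mu(\nabla\phi_1\Delta_\mu\phi_2-\nabla\phi_2\Delta_\mu\phi_1)=\langle\nabla\phi_1,\nabla\Delta_\mu\phi_2\rangle-\langle\nabla\phi_2,\nabla\Delta_\mu\phi_1\rangle,$$
since the terms $\Delta_\mu\phi_1\Delta_\mu\phi_2$ cancel. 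The two divergences therefore coincide. Finally, $\operatorname{div}_\mu=-d^*_\mu\flat$ on vector fields and $\Delta_\mu^{-1}=-G_\mu$ on mean-zero functions. The two minus signs cancel, so the two expressions for $\theta_\mu$ agree.

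\textbf{Main obstacle.} The delicate point is Step 1: justifying that $V_{\phi_1}$ acts on $V_{\phi_2}F_\varphi$ as a derivation, with constant fields extended off $\mu$ by the same potential. I would handle this with the flow curves of Proposition \ref{prop:constant-field}. Along these curves $V_{\phi_2}F_\varphi$ is again a linear test function, so the derivative is the explicit computation above. Beyond that, the remaining care is in the sign conventions for $\Delta_\mu$, $G_\mu$ and $d^*_\mu$.
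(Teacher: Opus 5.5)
Your proof is correct, but it reaches the result by a partly different route from the paper's.

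\textbf{Step 1.} This is the paper's computation, stopped one integration by parts earlier. The paper writes the derivative of $F_{\langle\nabla f,\nabla\phi_2\rangle}$ along the flow of $\nabla\phi_1$ as $-\int_M\langle\nabla f,\nabla\phi_2\rangle\,\Delta_\mu\phi_1\,d\mu$. It therefore ends up testing $\nabla f$ against $\nabla\phi_1\Delta_\mu\phi_2-\nabla\phi_2\Delta_\mu\phi_1$. You instead expand the gradient using the Hessian of $\varphi$ and test against $W=[\nabla\phi_1,\nabla\phi_2]$. The two fields differ, but they pair identically with every gradient.

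\textbf{Steps 2--3.} Here the routes separate.

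\emph{The paper} obtains the first expression for $\theta_\mu$ by citing Ding--Fang for the projection. It obtains the second expression only by citing Lott's Equation (5.11).

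\emph{You} reach the second expression directly by the Green operator argument. You then derive the first from $\operatorname{div}_\mu[X,Y]=X(\operatorname{div}_\mu Y)-Y(\operatorname{div}_\mu X)$, which is $\mathcal{L}_{[X,Y]}=[\mathcal{L}_X,\mathcal{L}_Y]$ applied to the volume form $\mu$, together with the product rule for $\operatorname{div}_\mu$.

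\textbf{What your route buys.} It is self-contained. It also makes explicit what the citations leave implicit:
\begin{itemize}
\item $\nabla\theta_\mu$ is the $L^2(\mu)$-projection of the Lie bracket of $M$.
\item The field $\nabla\phi_1\Delta_\mu\phi_2-\nabla\phi_2\Delta_\mu\phi_1$ used in the paper differs from $[\nabla\phi_1,\nabla\phi_2]$ by a field of vanishing $\mu$-divergence.
\end{itemize}
The Hessian form you arrive at first is also the one that Lemma \ref{Lemma4} and the curvature computation actually use.

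\textbf{What the paper's route buys.} It is shorter. Its test field drops out of the constant-field computation with no further manipulation, in the form in which Ding--Fang state the projection.

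\textbf{One silent step.} When you say it suffices to test on the linear functions $F_\varphi$, you are using what the paper spells out. Passing from the $F_\varphi$ to all smooth functions needs the Leibniz rule on polynomials and a density argument.
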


    \begin{proof}
  	Let $\phi^i_t$ denote the flow of $\nabla\phi_i$, so that by Proposition \ref{prop:constant-field} the curve $(\phi^i_t)_*\mu$ has constant velocity field $V_{\phi_i}$. Then
  	\begin{align*}
  		[V_{\phi_1},V_{\phi_2}]F_f(\mu)&=(V_{\phi_1}(V_{\phi_2}))F_f(\mu)-(V_{\phi_2}(V_{\phi_1}))F_f(\mu)\\
       &=\left.\frac{d}{dt}\right|_{t=0}(V_{\phi_2}F_f)((\phi^1_t)_*\mu)-\left.\frac{d}{dt}\right|_{t=0}(V_{\phi_1}F_f)((\phi^2_t)_*\mu)\\
       & =\left.\frac{d}{dt}\right|_{t=0}\int_M\langle\nabla f(x),\nabla\phi_2(x)\rangle ((\phi^1_t)_*\mu)(dx)\\
       & -\left.\frac{d}{dt}\right|_{t=0}\int_M\langle\nabla f(x),\nabla\phi_1(x)\rangle ((\phi^2_t)_*\mu)(dx)\\
       & =-\int_M\langle \nabla f,\nabla\phi_2\rangle\,\Delta_\mu \phi_1\, d\mu+\int_M\langle \nabla f,\nabla\phi_1\rangle\,\Delta_\mu \phi_2\, d\mu\\
	   &=\int_M\langle\nabla f,\nabla\phi_1\Delta_\mu\phi_2-\nabla\phi_2\Delta_\mu\phi_1\rangle\, d\mu.
\end{align*}
        By \cite[Proposition 3.2]{ding2021geometry}, the projection of $\mathcal{C}_{\phi_1,\phi_2}(\mu)=\nabla\phi_1\Delta_\mu\phi_2-\nabla\phi_2\Delta_\mu\phi_1$ onto $T_\mu P^\infty(M)$ is $V_{\theta_\mu}$ with
    $$\theta_\mu=\Delta_\mu^{-1}\operatorname{div}_\mu(\nabla\phi_1\Delta_\mu\phi_2-\nabla\phi_2\Delta_\mu\phi_1).$$
Thus,
    $$[V_{\phi_1},V_{\phi_2}]F_f(\mu)=V_{\theta_\mu}F_f(\mu).$$
    This formula can be extended to polynomials and by continuity to smooth functions so that we have the identification
    $$[V_{\phi_1},V_{\phi_2}]=V_{\theta_\mu}.$$

    Note that $\Delta_\mu^{-1}\operatorname{div}_\mu=G_\mu d^*_\mu\flat$, since $\Delta_\mu=-d^*_\mu d$ and $d^*_\mu\flat(\xi)=-\operatorname{div}_\mu(\xi)$, so that the two signs cancel. The identity
\begin{equation}\label{eq:bracket-potential}
	\theta_\mu=G_\mu d_\mu^*\flat(\nabla^2\phi_2(\nabla\phi_1)-\nabla^2\phi_1(\nabla\phi_2))
\end{equation}
is then \cite[Equation 5.11]{lott2006some}.
\end{proof}

This way,
\begin{equation}\label{bracket}
	\langle V_{\phi_1},[V_{\phi_2},V_{\phi_3}] \rangle_\mu=\int_M\langle\nabla \phi_2\,\Delta_\mu \phi_3-\nabla \phi_3\,\Delta_\mu\phi_2,\nabla\phi_1\rangle\, d\mu.
\end{equation}
On the other hand,
\begin{equation}\label{product}
	V_{\phi_1}\langle V_{\phi_2},V_{\phi_3}\rangle_\mu=\int_M\langle \nabla\phi_1,\nabla\langle\nabla \phi_2,\nabla\phi_3\rangle\rangle\, d\mu= -\int_M\langle \nabla\phi_2,\nabla\phi_3\rangle\,\Delta_\mu \phi_1\, d\mu.
\end{equation}

Let us denote the Levi-Civita connection of $P^{\infty}(M)$ by $\overline{\nabla}_{(\cdot)}(\cdot)$. By combining the equations (\ref{bracket}) and (\ref{product}) with the Koszul formula:
\begin{align*}
    2\langle\overline{\nabla}_{V_{\phi_1}}V_{\phi_2},V_{\phi_3}\rangle_\mu= 
    & V_{\phi_1}\langle V_{\phi_2},V_{\phi_3}\rangle_\mu+ V_{\phi_2}\langle V_{\phi_3},V_{\phi_1}\rangle_\mu- V_{\phi_3}\langle V_{\phi_1},V_{\phi_2}\rangle_\mu\\
    &+\langle V_{\phi_3},[V_{\phi_1},V_{\phi_2}] \rangle_\mu -\langle V_{\phi_2},[V_{\phi_1},V_{\phi_3}] \rangle_\mu -\langle V_{\phi_1},[V_{\phi_2},V_{\phi_3}] \rangle_\mu,
\end{align*}
one gets that, at $\mu=\rho\,d\vol$,
\begin{equation}\label{eqding}
    \langle\overline{\nabla}_{V_{\phi_1}}V_{\phi_2},V_{\phi_3}\rangle_\mu=\frac{1}{2}\left(\int_M\langle\nabla\langle\nabla\phi_1,\nabla\phi_2\rangle,\nabla\phi_3\rangle d\mu+\int_M\langle\Delta_\mu\phi_2\nabla\phi_1-\Delta_\mu\phi_1\nabla\phi_2,\nabla\phi_3 \rangle d\mu\right).
\end{equation}
Equivalently, by \cite[Lemma 3]{lott2006some},
\begin{equation}\label{prodconnect}
	\langle\overline{\nabla}_{V_{\phi_1}}V_{\phi_2},V_{\phi_3}\rangle_\mu=\int_M \langle\nabla \phi_1,\nabla^2\phi_2(\nabla\phi_3) \rangle\, d\mu,\end{equation}
which in local coordinates reads $\int_M\nabla_i\phi_1\,\nabla_j\phi_3\,\nabla^i\nabla^j\phi_2\, \rho\, d\vol$.

\begin{lemma}\label{Lemma4}
    At $\mu=\rho\,d\vol$, we have $\overline{\nabla}_{V_{\phi_1}}V_{\phi_2}= V_\phi$, where
$$\phi= G_\mu d_\mu^*\flat(\nabla^2\phi_2(\nabla \phi_1)).$$
\end{lemma}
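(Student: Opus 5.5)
The plan is to identify $\overline{\nabla}_{V_{\phi_1}}V_{\phi_2}$ through its inner products with the constant fields $V_{\phi_3}$, using formula (\ref{prodconnect}). Set $\xi:=\nabla^2\phi_2(\nabla\phi_1)$, which is a smooth vector field on $M$. Since $\nabla^2\phi_2$ is symmetric, $\langle\nabla\phi_1,\nabla^2\phi_2(\nabla\phi_3)\rangle=\langle\xi,\nabla\phi_3\rangle$. Formula (\ref{prodconnect}) therefore reads
$$\langle\overline{\nabla}_{V_{\phi_1}}V_{\phi_2},V_{\phi_3}\rangle_\mu=\int_M\langle\xi,\nabla\phi_3\rangle\,d\mu\qquad\text{for all }\phi_3\in C^\infty(M).$$

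The next step is to replace $\xi$ by a gradient with the same pairings. Put $\phi:=G_\mu d_\mu^*\flat(\xi)$. Since $d_\mu^*$ is the formal $L^2(\mu)$-adjoint of $d$, we have $\int_M d_\mu^*\flat(\xi)\,d\mu=\int_M\langle\xi,\nabla 1\rangle\,d\mu=0$, so $f:=d_\mu^*\flat(\xi)$ has $\mu$-mean zero. Hence $\phi$ is well defined and smooth, by elliptic regularity for $\Delta_\mu$, which has smooth coefficients because $\rho>0$ is smooth. By definition of the Green operator, $d_\mu^*d\phi=-\Delta_\mu\phi=f$. Then, for every $\phi_3$,
$$\int_M\langle\nabla\phi,\nabla\phi_3\rangle\,d\mu=\int_M\phi_3\,d_\mu^*d\phi\,d\mu=\int_M\phi_3\,d_\mu^*\flat(\xi)\,d\mu=\int_M\langle\xi,\nabla\phi_3\rangle\,d\mu.$$
In other words, $\nabla\phi$ is the $L^2(\mu)$-orthogonal projection of $\xi$ onto gradients, which is the Hodge decomposition mentioned in the preliminaries. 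By (\ref{ottopotential}), this gives $\langle V_\phi,V_{\phi_3}\rangle_\mu=\langle\overline{\nabla}_{V_{\phi_1}}V_{\phi_2},V_{\phi_3}\rangle_\mu$ for all $\phi_3$.

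Finally, we conclude by nondegeneracy. The difference $\overline{\nabla}_{V_{\phi_1}}V_{\phi_2}-V_\phi$ lies in $T_\mu P^\infty(M)=\{V_\psi\}$, so it can be taken as a test vector $V_{\phi_3}$. Its Otto norm is then zero, hence $\nabla\psi=0$ $\mu$-a.e., and since $\rho>0$ the vector vanishes. The main obstacle is this last point: we must know a priori that $\overline{\nabla}_{V_{\phi_1}}V_{\phi_2}$ is represented by a smooth potential, i.e.\ that it lies in $T_\mu P^\infty(M)$ and not merely in its $L^2_0(\mu)$-closure. If that is not available, I would argue in the completion $L^2_0(\mu)$ instead. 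There the Koszul formula determines the connection as the Riesz representative of the functional $\phi_3\mapsto\int_M\langle\xi,\nabla\phi_3\rangle\,d\mu$, and density of $\{\nabla\phi_3\}$ gives uniqueness. The computation above then shows that this representative is the smooth field $\nabla\phi$, so the connection does in fact take values in $T_\mu P^\infty(M)$.
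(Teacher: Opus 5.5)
Your proposal is correct and follows the paper's proof. Both pair $V_\phi$ against an arbitrary $V_{\phi_3}$, use that $dG_\mu d^*_\mu$ is the $L^2(\mu)$-orthogonal projection onto exact forms (which you verify directly by integrating by parts against the defining equation of $G_\mu$), invoke the symmetry of $\nabla^2\phi_2$ together with (\ref{prodconnect}), and conclude by nondegeneracy of the metric on $T_\mu P^\infty(M)$. Your added checks, that $d^*_\mu\flat(\xi)$ has $\mu$-mean zero and that $\phi$ is smooth, and your remark that the Koszul functional is in fact represented by a smooth potential, are details the paper leaves implicit.
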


    \begin{proof}
    	The proof can be found in \cite[Lemma 4]{lott2006some} in local coordinates. We present it here adapted to our intrinsic language. Let $\Pi_\mu$ denote the orthogonal projection onto $\overline{\operatorname{Im}(d)}\subset \Omega^1_{L^2}(M,\mu)$, so that $dG_\mu d^*_\mu=\Pi_\mu$ by the definition of $G_\mu$. Given $\phi_3\in C^\infty(M)$, and using that $d\phi_3\in\operatorname{Im}(d)$ to drop $\Pi_\mu$ in the third equality,
    \begin{align*}
        \langle V_{\phi_3},V_\phi\rangle_\mu & =\int_M\langle d\phi_3,dG_\mu d_\mu^*\flat(\nabla^2\phi_2(\nabla \phi_1))\rangle\, d\mu\\
        & =\int_M\langle d\phi_3,\Pi_\mu\flat(\nabla^2\phi_2(\nabla \phi_1))\rangle\, d\mu= \int_M\langle d\phi_3,\flat(\nabla^2\phi_2(\nabla \phi_1))\rangle\, d\mu\\
    	&=\int_M\langle \nabla\phi_3,\nabla^2\phi_2(\nabla \phi_1)\rangle\, d\mu=\int_M\langle \nabla\phi_1,\nabla^2\phi_2(\nabla \phi_3)\rangle\, d\mu=\langle V_{\phi_3},\overline{\nabla}_{V_{\phi_1}}V_{\phi_2}\rangle_\mu,
	\end{align*}
    where the second to last equality uses the symmetry of $\nabla^2\phi_2$ and the last one is (\ref{prodconnect}). Since this holds for every $\phi_3\in C^\infty(M)$ and the metric is non-degenerate on $T_\mu P^\infty(M)$, we conclude that $V_\phi=\overline{\nabla}_{V_{\phi_1}}V_{\phi_2}$.
\end{proof}

Another useful relation that arises from (\ref{eqding}) is given by \cite[Theorem 3.3]{ding2021geometry}:
\begin{equation}\label{theo3.3}
	\overline{\nabla}_{V_{\phi_1}}V_{\phi_2}=\frac{1}{2}V_{\langle\nabla\phi_1,\nabla\phi_2\rangle}+\frac{1}{2}[V_{\phi_1},V_{\phi_2}].
\end{equation}
It is this last identity, together with (\ref{prodconnect}), that the parallelization of the next section turns into explicit Christoffel symbols.

%%%%%%%%%%%%%%%%%%%%%%%%%%%%%%%%%%%%%%%%%%%%%%%%%%%%%%%%%%%%%%%%%%%%%%%%%%%%%%%%%%%%%%%%%%%%%%%%%%%%%%%%%%%%%%%%%%%%%%%%%%%%%%%%%%%%%%%%%%%%%%%%%%%%%%%%%%%%%%%%%%%%%%%%%%%%%%%%%%%%%%%%%%%%%%%%%%%%%%%%%%%%%%%%%%%%%%%%%%%%%%%%%%%%%%%%%%%%%%%%%%%%%%%%%%%%%%%%%%%%%%%%%%%%%%%%%%%%%%%%%%%%%%%%%%%%%%%%%%%%%%%%%%%%%%%%%%%%%%%%%%%%%%%%%%%%%%%%%%%%%%%%%%%%%%%%%%%%%%%%%%%%%%%%%%%%%%%%%%%%%%%%%%%%%%%%%%%%%%%%

\section{A Parallelization of \texorpdfstring{$P^\infty(M)$}{P(M)}}\label{secparallelization}

In this section we restrict the previous constructions to the subspace $P^\infty(M)$ and make the Riemannian structure more explicit. Our goal is to construct a global parallelization of the tangent bundle in terms of potentials.

To this end, we fix a basis of eigenfunctions of the Laplace-Beltrami operator of $M$ and consider the associated family of vector fields on $P^\infty(M)$. This allows us to represent tangent vectors by sequences of coefficients and to express the metric and the Levi-Civita connection in this basis.

In particular, we introduce the matrix coefficients of the metric and the corresponding Christoffel symbols, and we write the equations of parallel transport and of geodesics in the frame. This yields a coordinate description of the geometry of $P^\infty(M)$ compatible with the intrinsic formalism developed in the previous section and with Lott's extrinsic formalism in \cite{lott2006some}.

The existence of such a frame is stated by Lott \cite[Section 4]{lott2006some}, without proof and without specifying the topology in which the family is a basis. Theorem \ref{thm:parallelization} gives a proof for bases of eigenfunctions, with that topology made explicit. The equation of parallel transport and the curvature formulas are those of Lott \cite[Sections 4 and 5]{lott2006some}, written in the intrinsic notation of Section \ref{sec:preliminaries}. What the frame adds are the coefficients of the metric and of the connection, which Section \ref{secExample} computes in closed form on the circle.

\subsection{Global Basis}

The whole construction rests on the following observation: a basis of eigenfunctions of the Laplacian of $M$ induces, simultaneously at every measure, a basis of the tangent space. Recall from Subsection \ref{sec1} that $V_\mu$ identifies the space $L^2_0(\mu)$, the closure in $L^2(\mu)$ of the gradients of smooth functions, with $T_\mu P(M)$, and that $T_\mu P^\infty(M)$ is the dense subspace of the $V_\psi$ with $\psi\in C^\infty(M)$.

\begin{theo}\label{thm:parallelization}
	Let $\{\phi_i\}_{i\geq1}\subset C^\infty(M)$ be eigenfunctions of the Laplace-Beltrami operator, $\Delta\phi_i=-\lambda_i\phi_i$ with $\lambda_i>0$, such that $\{1\}\cup\{\phi_i\}_{i\geq1}$ is an orthogonal basis of $L^2(\vol)$. Then, for every $\mu\in P^\infty(M)$, the following hold.
	\begin{enumerate}
		\item Every $X\in L^2_0(\mu)$ has an expansion $X=\sum_ic_i\nabla\phi_i$ converging in $L^2(\mu)$, and the coefficients are unique: if $\sum_ic_i\nabla\phi_i$ converges to $0$ in $L^2(\mu)$, then $c_i=0$ for every $i$. Equivalently, $\{V_{\phi_i}\}_{i\geq1}$ is a Schauder basis of the Hilbert space $T_\mu P(M)$ with Otto's inner product (\ref{ottoprod}).
		\item If $\psi\in C^\infty(M)$, then $V_\psi=\sum_ic_iV_{\phi_i}$ with $c_i=\langle\psi,\phi_i\rangle_{L^2(\vol)}/\norm{\phi_i}^2_{L^2(\vol)}$, and $\sum_ic_i\phi_i$ converges to $\psi-\int_M\psi\,d\vol$ in every $C^k(M)$.
	\end{enumerate}
	Since the family does not depend on $\mu$, $\{V_{\phi_i}\}_{i\geq1}$ is a global frame for $TP^\infty(M)$.
\end{theo}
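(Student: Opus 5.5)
The argument rests on one observation. For $\mu=\rho\,\vol\in P^\infty(M)$ the density $\rho$ is smooth and positive on the compact manifold $M$, so $0<m\le\rho\le m'$ for some constants $m,m'$. Hence for every measurable vector field $X$,
$$m\int_M|X|^2\,d\vol\le\int_M|X|^2\,d\mu\le m'\int_M|X|^2\,d\vol,$$
and the norms of $L^2(\mu)$ and $L^2(\vol)$ on vector fields are equivalent. In particular the two closures of $\{\nabla\psi:\psi\in C^\infty(M)\}$ coincide, so $L^2_0(\mu)=L^2_0(\vol)$ as sets. The same holds for convergence of series. I therefore plan to prove everything for $\vol$, where the Laplacian is diagonal, and transfer it to $\mu$ at the end.

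\emph{Orthogonality and completeness for $\vol$.} Integrating by parts and using $\Delta\phi_j=-\lambda_j\phi_j$ gives
$$\int_M\langle\nabla\phi_i,\nabla\phi_j\rangle\,d\vol=\lambda_j\int_M\phi_i\phi_j\,d\vol=\lambda_j\norm{\phi_j}^2_{L^2(\vol)}\,\delta_{ij}.$$
So $\{\nabla\phi_i\}$ is an orthogonal family in $L^2(\vol)$ with nonzero norms. For completeness, take $\psi\in C^\infty(M)$ with partial sums $S_N=\sum_{i\le N}c_i\phi_i$, where $c_i=\langle\psi,\phi_i\rangle/\norm{\phi_i}^2$. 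Then
$$\norm{\nabla(\psi-S_N)}^2_{L^2(\vol)}=\sum_{i>N}\lambda_i c_i^2\norm{\phi_i}^2,$$
and this tail tends to $0$ because $\sum_i\lambda_ic_i^2\norm{\phi_i}^2=\norm{\nabla\psi}^2<\infty$ by Parseval applied to $\langle-\Delta\psi,\psi\rangle$. Since gradients of smooth functions are dense in $L^2_0$ by definition, $\{\nabla\phi_i\}$ is an orthogonal basis of $L^2_0(\vol)$. Every $X$ then has the expansion $X=\sum_ic_i\nabla\phi_i$ in $L^2(\vol)$, with $c_i=\langle X,\nabla\phi_i\rangle_{L^2(\vol)}/(\lambda_i\norm{\phi_i}^2)$.

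\emph{Transfer to $\mu$.} The expansion converges in $L^2(\mu)$ by the norm equivalence. For uniqueness, suppose $\sum_ic_i\nabla\phi_i\to0$ in $L^2(\mu)$. Then it converges to $0$ in $L^2(\vol)$, and pairing with $\nabla\phi_j$, which is continuous in that topology, gives $c_j\lambda_j\norm{\phi_j}^2=0$, so $c_j=0$. In fact an orthogonal basis for one inner product is a Riesz basis for any equivalent one, so the expansion is unconditional in $L^2(\mu)$. Finally $V_\mu$ is an isometry $L^2_0(\mu)\to T_\mu P(M)$ for Otto's metric (\ref{ottoprod}), so the statement about $\{V_{\phi_i}\}$ being a Schauder basis follows.

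\emph{Part (2) and the main obstacle.} The coefficients of $V_\psi$ are the $c_i$ above, which is why they do not depend on $\mu$. For $C^k$ convergence I use smoothness: applying $\Delta$ repeatedly gives $\langle\psi,\phi_i\rangle=(-\lambda_i)^{-s}\langle\Delta^s\psi,\phi_i\rangle$, so $\sum_i\lambda_i^{2s}c_i^2\norm{\phi_i}^2<\infty$ for every $s$. Thus $S_N\to\psi-\int_M\psi\,d\vol$ in every Sobolev space $H^{2s}$, with norm $\norm{(1-\Delta)^sf}_{L^2}$, by elliptic regularity. The Sobolev embedding $H^{2s}\hookrightarrow C^k$ for $2s>k+\dim M/2$ then gives convergence in $C^k$. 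The step needing most care is this $C^k$ claim, since the eigenfunctions are not uniformly bounded. I avoid any pointwise bounds on $\phi_i$ by working entirely in the Sobolev scale, where the estimates are diagonal in the eigenbasis.
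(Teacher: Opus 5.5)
Your proof is correct and follows the paper's skeleton: the equivalence of the $L^2(\mu)$ and $L^2(\vol)$ norms, orthogonality of the $\nabla\phi_i$ in $L^2(\vol)$, transfer of the expansion to $L^2(\mu)$, and uniqueness by pairing with $\nabla\phi_j$. It departs from the paper in the order of the steps and in the analytic input.

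The paper proves (2) first. It combines the rapid decay of the $c_i$ with Weyl's law and the bounds $\norm{\phi_i}_{C^k}\leq C_k(1+\lambda_i)^{N_k}\norm{\phi_i}_{L^2(\vol)}$ to get absolute convergence of $\sum_ic_i\phi_i$ in every $C^k$. It then deduces the density of $\operatorname{span}\{\nabla\phi_i\}$ in $L^2_0(\vol)$ from the resulting uniform convergence $\nabla S_N\to\nabla\psi$.

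You obtain that density directly from $\norm{\nabla(\psi-S_N)}^2_{L^2(\vol)}=\sum_{i>N}\lambda_ic_i^2\norm{\phi_i}^2$. This identity is Pythagoras, since $\nabla(\psi-S_N)$ is orthogonal to $\nabla\phi_j$ for $j\leq N$. As a result, part (1) is purely Hilbertian and uses no regularity theory at all.

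For the $C^k$ claim you work in the Sobolev scale, where $\norm{(1-\Delta)^s\,\cdot\,}_{L^2(\vol)}$ is diagonal in the eigenbasis. You therefore need neither Weyl's law nor sup-norm bounds on eigenfunctions. The only inputs are the equivalence of that norm with the $H^{2s}$ norm and the Sobolev embedding.

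What each route buys:
\begin{itemize}
\item The paper's route gives absolute convergence in $C^k$, which is marginally stronger.
\item Your route is more economical, decouples (1) from (2), and adds the correct observation that the frame is a Riesz basis of $T_\mu P(M)$, so the expansions in (1) converge unconditionally.
\end{itemize}
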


\begin{proof}
	Write $\mu=\rho\,\vol$ with $\rho\in C^\infty(M)$ positive. Since $M$ is compact and $\rho$ is continuous and positive, $m:=\inf_M\rho>0$ and $K:=\sup_M\rho<\infty$, so that
	$$m\norm{\xi}^2_{L^2(\vol)}\leq\norm{\xi}^2_{L^2(\mu)}\leq K\norm{\xi}^2_{L^2(\vol)}$$
	for every $\xi\in L^2(\vol)$. The norms $L^2(\vol)$ and $L^2(\mu)$ are therefore equivalent, both on functions and on vector fields, and in particular $L^2_0(\mu)=L^2_0(\vol)$ as sets.

	We prove (2) first. For $\psi\in C^\infty(M)$ and $k\geq0$, integrating by parts $k$ times gives $\langle\psi,\phi_i\rangle_{L^2(\vol)}=(-\lambda_i)^{-k}\langle\Delta^k\psi,\phi_i\rangle_{L^2(\vol)}$, so the coefficients $c_i$ decay faster than any power of $\lambda_i$. By Weyl's law and the bounds $\norm{\phi_i}_{C^k}\leq C_k(1+\lambda_i)^{N_k}\norm{\phi_i}_{L^2(\vol)}$, which follow from elliptic regularity and the Sobolev embedding, the series $\sum_ic_i\phi_i$ converges in every $C^k(M)$. Its limit is $\psi-\int_M\psi\,d\vol$, because this is its limit in $L^2(\vol)$. In particular $\nabla\psi=\sum_ic_i\nabla\phi_i$ uniformly on $M$, hence in $L^2(\mu)$, and $V_\psi=\sum_ic_iV_{\phi_i}$ since $V_\mu$ is an isometry by (\ref{ottoprod}).

	For (1), integration by parts gives $\langle\nabla\phi_i,\nabla\phi_j\rangle_{L^2(\vol)}=\lambda_j\langle\phi_i,\phi_j\rangle_{L^2(\vol)}$, so the gradients $\nabla\phi_i$ are orthogonal in $L^2(\vol)$, with $\norm{\nabla\phi_i}^2_{L^2(\vol)}=\lambda_i\norm{\phi_i}^2_{L^2(\vol)}$. By (2) their closed span in $L^2(\vol)$ contains every gradient of a smooth function, so it is $L^2_0(\vol)$, and the normalized gradients form an orthonormal basis of $L^2_0(\vol)$. Every $X\in L^2_0(\mu)=L^2_0(\vol)$ is then the sum of its expansion in $L^2(\vol)$, hence in $L^2(\mu)$ by the equivalence of norms. If $\sum_ic_i\nabla\phi_i$ converges to $0$ in $L^2(\mu)$, it converges to $0$ in $L^2(\vol)$, and taking the inner product with $\nabla\phi_j$ gives $c_j\lambda_j\norm{\phi_j}^2_{L^2(\vol)}=0$, so $c_j=0$. The statement about $T_\mu P(M)$ follows because $V_\mu$ is an isometry.
\end{proof}

Theorem \ref{thm:parallelization} is a proof, for bases of eigenfunctions, of the observation of Lott recalled above, and some care in the choice of basis is needed. The constant function has to be excluded, since $V_1=0$. For a general orthonormal basis of $L^2(\vol)$ whose expansions converge in $C^1$, the argument above still gives expansions of the $V_\psi$ with $\psi$ smooth, but not the uniqueness of the coefficients, because the gradients need not be orthogonal. On a compact Lie group with a bi-invariant metric the Laplacian acts on the span of the matrix coefficients of each irreducible representation by a scalar, so by the Peter-Weyl theorem an orthogonal basis of real eigenfunctions can be extracted from the real and imaginary parts of the matrix coefficients. On the circle this is the Fourier basis used in Section \ref{secExample}.

We stress that $\{V_{\phi_i}\}$ is orthogonal at $\mu=\vol$ but not at other measures: the inner products $\langle V_{\phi_i},V_{\phi_j}\rangle_\mu$ vary with $\mu$, and it is precisely this dependence that gives content to the Christoffel symbols computed below.

\subsection{Metric Coefficients, Christoffel Symbols and Geodesics}

The global frame given by Theorem \ref{thm:parallelization} allows us to define matrix coefficients for the Riemannian metric of $P^\infty(M)$ and the associated Christoffel symbols, and thereby to write the geodesic equation as a system of ordinary differential equations. 

We start by defining the matrix coefficients of the Riemannian metric $\overline{g}$ of $P^\infty(M)$ by
\begin{equation}
	\label{matrixmetric}\overline{g}_{ij}(\mu)=\langle V_{\phi_i},V_{\phi_j}\rangle_\mu=\int_M\langle\nabla\phi_i,\nabla\phi_j\rangle\,d\mu,
\end{equation}
and its associated Christoffel symbols, $\Gamma^k_{ij}$ via
\begin{equation}\label{Christoffel}
    \overline{\nabla}_{V_{\phi_i}}V_{\phi_j}=\sum_k \Gamma^k_{ij} V_{\phi_k}.
\end{equation}
By Lemma \ref{Lemma4}, $\overline{\nabla}_{V_{\phi_i}}V_{\phi_j}=V_{\chi_{ij}}$ with $\chi_{ij}=G_\mu d^*_\mu\flat(\nabla^2\phi_j(\nabla\phi_i))$, a smooth function by elliptic regularity. Theorem \ref{thm:parallelization} therefore shows that the $\Gamma^k_{ij}$ are well defined, and that
\begin{equation}\label{eq:christoffel-explicit}
	\Gamma^k_{ij}(\mu)=\frac{\big\langle G_\mu d^*_\mu\flat\big(\nabla^2\phi_j(\nabla\phi_i)\big),\phi_k\big\rangle_{L^2(\vol)}}{\norm{\phi_k}^2_{L^2(\vol)}},
\end{equation}
an expression which does not require inverting the matrix $\overline{g}(\mu)$. The symmetric part of the Christoffel symbols does not depend on $\mu$. Indeed, (\ref{theo3.3}) gives $\overline{\nabla}_{V_{\phi_i}}V_{\phi_j}+\overline{\nabla}_{V_{\phi_j}}V_{\phi_i}=V_{\langle\nabla\phi_i,\nabla\phi_j\rangle}$, so that
\begin{equation}\label{eq:christoffel-sym}
	\Gamma^k_{ij}+\Gamma^k_{ji}=\frac{\big\langle\langle\nabla\phi_i,\nabla\phi_j\rangle,\phi_k\big\rangle_{L^2(\vol)}}{\norm{\phi_k}^2_{L^2(\vol)}}.
\end{equation}
The dependence on $\mu$ is carried by the antisymmetric part, which is given by the Lie bracket $[V_{\phi_i},V_{\phi_j}]$ of Proposition \ref{prop:lie-bracket}.

Now, fix $V_X=\sum_{i} x_i V_{\phi_i}$ and $V_Y=\sum_{i} y_i V_{\phi_i}$ two smooth vector fields of $P^{\infty}(M)$ with $x_i,y_i\in C^\infty( P^{\infty}(M))$ for every $i\in\mathbb{N}$. Then,
\begin{align*}
    [V_X,V_Y]&= \sum_j [V_X,y_j V_{\phi_j}]=\sum_jV_X(y_j)V_{\phi_j}+y_j[V_X,V_{\phi_j}]\\
    &=\sum_{i,j} x_i V_{\phi_i}(y_j)V_{\phi_j}-y_j V_{\phi_j}(x_i)V_{\phi_i}+x_iy_j[V_{\phi_i},V_{\phi_j}].
\end{align*}

So, by Equation (\ref{theo3.3}),
\begin{align*}
   \overline{\nabla}_{V_X}V_Y & =\sum_{i,j}x_i\left(y_j\left(\overline{\nabla}_{V_{\phi_i}}V_{\phi_j}\right)+V_{\phi_i}(y_j)V_{\phi_j}\right)\\
    &=\sum_{i,j}\left(x_iy_j\left(\frac{1}{2}V_{\langle \nabla{\phi_i},\nabla{\phi_j}\rangle}+\frac{1}{2}[V_{\phi_i},V_{\phi_j}]\right)+x_iV_{\phi_i}(y_j)V_{\phi_j}\right)\\
    &=\frac{1}{2}\sum_{i,j}x_iy_j\left(V_{\langle \nabla{\phi_i},\nabla{\phi_j}\rangle}+[V_{\phi_i},V_{\phi_j}]\right)+\sum_j V_X(y_j)V_{\phi_j}. 
    \end{align*}

We finally have the necessary framework to study the notion of parallelism of a vector field along a curve and hence to study the geodesics of our spaces.

%%%%%%%%%%%%%%%%%%%%%%%%%%%%%%%%%%%%%%%%%%%%%

    \begin{prop}\label{prop:parallel}
	Let $\mu_t=\rho_t\,d\vol$ be a smooth curve in $P^\infty(M)$ with $\dot{\mu}_t=V_{\psi_t}$, and let $\eta_t\in C^\infty(M)$ depend smoothly on $t$. The vector field $V_{\eta_t}$ along $\mu_t$ is parallel if and only if one of the following equivalent conditions holds for every $t$:
        \begin{enumerate}
            \item $\dot{\eta}_t+G_{\mu_t}d^*_{\mu_t}\flat\big(\nabla^2\eta_t(\nabla\psi_t)\big)$ is constant on $M$,
            \item $2\dot{\eta}_t+\langle\nabla\eta_t,\nabla\psi_t\rangle+\theta_t$ is constant on $M$,
        \end{enumerate}
    where $\theta_t=\Delta_{\mu_t}^{-1}\operatorname{div}_{\mu_t}(\nabla \psi_t\Delta_{\mu_t}\eta_t-\nabla\eta_t\Delta_{\mu_t} \psi_t)$ is the potential of $[V_{\psi_t},V_{\eta_t}]$ given by Proposition \ref{prop:lie-bracket}.
    \end{prop}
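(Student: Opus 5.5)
The plan is to compute the covariant derivative $D_tV_{\eta_t}$ of the field $V_{\eta_t}$ along $\mu_t$ with the Leibniz rule, using the constant fields as a reference. At each time, extend $V_{\eta_t}$ to the vector field $\mu\mapsto V_{\eta_t}|_\mu$ on $P^\infty(M)$; for fixed $t$ this is the constant field with potential $\eta_t$. The time dependence of the potential contributes a term $V_{\dot\eta_t}$, since $V_\mu$ is linear in the potential. We therefore expect
$$D_tV_{\eta_t}=V_{\dot\eta_t}+\overline{\nabla}_{V_{\psi_t}}V_{\eta_t}.$$
To justify this, expand $\eta_t=\sum_i c_i(t)\phi_i$ by Theorem \ref{thm:parallelization}(2), which converges in every $C^k$. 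Then $V_{\eta_t}=\sum_ic_i(t)V_{\phi_i}$ along the curve, and the Christoffel formula (\ref{Christoffel}) gives
$$D_tV_{\eta_t}=\sum_k\Big(\dot c_k+\sum_{i,j}\psi_i c_j\Gamma^k_{ij}\Big)V_{\phi_k},$$
where $\psi_t=\sum_i\psi_i\phi_i$. Resumming this uses the linearity and continuity of $(\phi_1,\phi_2)\mapsto\overline{\nabla}_{V_{\phi_1}}V_{\phi_2}=V_{G_\mu d^*_\mu\flat(\nabla^2\phi_2(\nabla\phi_1))}$ under $C^2$ convergence. That continuity follows from Lemma \ref{Lemma4} and the $L^2$ boundedness of $dG_\mu d^*_\mu=\Pi_\mu$.

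For condition (1), apply Lemma \ref{Lemma4} to get $D_tV_{\eta_t}=V_{\dot\eta_t+G_{\mu_t}d^*_{\mu_t}\flat(\nabla^2\eta_t(\nabla\psi_t))}$. Parallelism means this vanishes. By Theorem \ref{thm:parallelization}, equivalently by the isomorphism $C^\infty(M)/\mathbb{R}\cong T_\mu P^\infty(M)$, we have $V_f=0$ if and only if $f$ is constant. This gives (1).

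For condition (2), use (\ref{theo3.3}) instead:
$$\overline{\nabla}_{V_{\psi_t}}V_{\eta_t}=\tfrac12V_{\langle\nabla\psi_t,\nabla\eta_t\rangle}+\tfrac12[V_{\psi_t},V_{\eta_t}],$$
and by Proposition \ref{prop:lie-bracket} the bracket equals $V_{\theta_t}$, with $\theta_t$ exactly as in the statement. Hence
$$2D_tV_{\eta_t}=V_{2\dot\eta_t+\langle\nabla\eta_t,\nabla\psi_t\rangle+\theta_t},$$
and the same kernel argument yields (2). The equivalence of (1) and (2) comes for free, since both express the same vector $D_tV_{\eta_t}$. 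As a consistency check, one can also verify directly that the two potentials differ by a constant, using (\ref{eqding}) and (\ref{prodconnect}).

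The main obstacle is the first step: justifying the Leibniz formula $D_tV_{\eta_t}=V_{\dot\eta_t}+\overline{\nabla}_{V_{\psi_t}}V_{\eta_t}$ in this formal infinite-dimensional setting. In particular, term-by-term differentiation of the frame expansion must be controlled. I would handle this with smooth $t$-dependence of the coefficients $c_i(t)$, which decay rapidly uniformly on compact time intervals by the integration-by-parts bound in the proof of Theorem \ref{thm:parallelization}, and the $L^2(\mu_t)$ continuity of the connection noted above. Alternatively, one could take this formula as the definition of the covariant derivative along curves induced by $\overline{\nabla}$, which is how Lott proceeds.
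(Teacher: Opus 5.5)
Your proposal is correct and follows essentially the paper's proof. The Leibniz rule gives $D_tV_{\eta_t}=V_{\dot\eta_t}+\overline{\nabla}_{V_{\psi_t}}V_{\eta_t}$ with $\eta_t$ frozen in the second term; Lemma \ref{Lemma4} then yields (1), (\ref{theo3.3}) with Proposition \ref{prop:lie-bracket} yields (2), and both conclude because $V_\chi=0$ exactly when $\chi$ is constant. The only difference is that you sketch a justification of the Leibniz step through the frame expansion and the $L^2$-boundedness of $\Pi_\mu$, whereas the paper simply invokes the Leibniz rule for the covariant derivative along a curve.
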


    \begin{proof}
	For fixed $s$, $V_{\eta_s}$ is a vector field on $P^\infty(M)$, and the field along the curve is $t\mapsto V_{\eta_t}|_{\mu_t}$. By the Leibniz rule for the covariant derivative along a curve, $D_tV_{\eta_t}=\partial_sV_{\eta_s}|_{s=t}+\overline{\nabla}_{\dot{\mu}_t}V_{\eta_t}$, where in the second term $\eta_t$ is frozen. Since $\eta\mapsto V_\eta$ is linear, the first term is $V_{\dot{\eta}_t}$. Lemma \ref{Lemma4} with $\phi_1=\psi_t$ and $\phi_2=\eta_t$ then gives
	$$D_tV_{\eta_t}=V_{\dot{\eta}_t+G_{\mu_t}d^*_{\mu_t}\flat(\nabla^2\eta_t(\nabla\psi_t))}.$$
	Since $\rho_t>0$ and $M$ is connected, $V_\chi=0$ exactly when $\chi$ is constant, and this gives (1). By (\ref{theo3.3}) and Proposition \ref{prop:lie-bracket}, $\overline{\nabla}_{V_{\psi_t}}V_{\eta_t}=\frac12V_{\langle\nabla\psi_t,\nabla\eta_t\rangle}+\frac12V_{\theta_t}$, and this gives (2).
    \end{proof}

Condition (1) is the equation of parallel transport of Lott \cite[Section 4]{lott2006some}, written in intrinsic form. If we take $\eta_t=\psi_t$, the bracket vanishes and (2) becomes $2\dot{\psi}_t+|\nabla\psi_t|^2=\mathrm{const}$. A potential is defined only up to an additive constant, which may depend on $t$, so this constant can be absorbed into $\psi_t$, and we recover \cite[Proposition 4]{lott2006some} on the geodesics of $P^\infty(M)$.
    
    \begin{corollary}\label{corollarygeodesic}
    	The curve $\mu_t$ is a geodesic if and only if, after normalizing the additive constants of the $\psi_t$,
    	\begin{equation}\label{edogeodesica}
    		\frac{\partial\psi_t}{\partial t}+\frac{1}{2}|\nabla\psi_t|^2=0.
    	\end{equation}
    \end{corollary}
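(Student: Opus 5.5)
A curve is a geodesic exactly when its velocity field is parallel along it, so the plan is to apply Proposition \ref{prop:parallel} with $\eta_t=\psi_t$ and then normalize the potentials.

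First I will check that the bracket term drops out. Taking $\phi_1=\phi_2=\psi_t$ in Proposition \ref{prop:lie-bracket}, the vector field $\nabla\psi_t\Delta_{\mu_t}\psi_t-\nabla\psi_t\Delta_{\mu_t}\psi_t$ vanishes identically, so $\theta_t=0$. Condition (2) of Proposition \ref{prop:parallel} then says that $\mu_t$ is a geodesic if and only if, for every $t$,
$$2\dot\psi_t+|\nabla\psi_t|^2=c(t)$$
for some function $c(t)$ that is constant on $M$.

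Next comes the normalization. For a smooth function $a(t)$, replacing $\psi_t$ by $\tilde\psi_t=\psi_t-a(t)$ does not change $\nabla\psi_t$. Hence $V_{\tilde\psi_t}=V_{\psi_t}=\dot\mu_t$, and $\tilde\psi_t$ is still a valid potential for the velocity. The replacement changes $2\dot\psi_t+|\nabla\psi_t|^2$ by $-2\dot a(t)$. Choosing $a(t)=\tfrac12\int_0^t c(s)\,ds$ gives $\tfrac{\partial\tilde\psi_t}{\partial t}+\tfrac12|\nabla\tilde\psi_t|^2=0$. Conversely, if (\ref{edogeodesica}) holds for some choice of potentials, condition (2) holds with $c\equiv0$, so the velocity is parallel and $\mu_t$ is a geodesic.

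The only points needing care are smoothness and the direction of the equivalence. The function $c(t)=2\dot\psi_t+|\nabla\psi_t|^2$ is smooth in $t$, because $\psi_t$ depends smoothly on $t$; this makes $a(t)$ smooth. The normalization is in general unavoidable: the potential of a tangent vector is defined only modulo constants, so (\ref{edogeodesica}) can hold only after fixing those constants, and the statement is phrased accordingly. No analytic obstacle is expected beyond what Proposition \ref{prop:parallel} already provides.
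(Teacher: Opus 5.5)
Your proposal is correct and is the paper's argument: setting $\eta_t=\psi_t$ in Proposition~\ref{prop:parallel} makes the bracket potential vanish, so condition (2) reads $2\dot\psi_t+|\nabla\psi_t|^2=c(t)$, and the time-dependent constant is then absorbed into the potential. You spell out the normalization $\tilde\psi_t=\psi_t-\tfrac12\int_0^t c(s)\,ds$ and the converse direction explicitly, where the paper states them in a one-line remark.
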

    
    It is a classical fact in optimal transport theory that (\ref{edogeodesica}) holds for the velocity fields of displacement interpolations, see \cite[Equation 13.16]{villani2009optimal}. In general its solutions are smooth only up to a finite time, at which the characteristics $x\mapsto\exp_x(t\nabla\psi_0(x))$ start to cross, as we make explicit on the circle in Proposition \ref{prop:geodesic-circle}.

In the frame of Theorem \ref{thm:parallelization}, write $\psi_t=\sum_k\psi_k(t)\phi_k$. Only the symmetric part of the Christoffel symbols enters the geodesic equation, and by (\ref{eq:christoffel-sym}) it does not depend on the measure.

    \begin{corollary}\label{cor:geochristoffel}
        A smooth curve $\mu_t$ in $P^\infty(M)$ whose velocity field is $V_{\psi_t}$ is a geodesic if and only if, for every $k\geq1$,
        \begin{equation}\label{geochristoffel}
        \psi'_k+\sum_{i,j}\psi_i\psi_j\Gamma_{ij}^k(\mu_t)=0.
        \end{equation}
	Equivalently,
	$$\psi'_k=-\frac{1}{2\norm{\phi_k}^2_{L^2(\vol)}}\sum_{i,j}\psi_i\psi_j\big\langle\langle\nabla\phi_i,\nabla\phi_j\rangle,\phi_k\big\rangle_{L^2(\vol)},\qquad k\geq1,$$
	which is a closed quadratic system for the coefficients $\psi_k$ alone.
    \end{corollary}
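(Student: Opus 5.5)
The proof will run through Corollary \ref{corollarygeodesic}: $\mu_t$ is a geodesic if and only if, after normalizing the additive constants, $\dot\psi_t+\frac12|\nabla\psi_t|^2=0$. The first step is to expand both terms of this equation in the frame of Theorem \ref{thm:parallelization}. Since $\psi_t$ is smooth in $x$ and $t$, part (2) of that theorem gives $\psi_t=\sum_k\psi_k(t)\phi_k$ with convergence in every $C^k(M)$. The same holds for $\dot\psi_t$, with coefficients $\psi_k'(t)$; to justify the termwise derivative I use that $\psi_k(t)=\langle\psi_t,\phi_k\rangle_{L^2(\vol)}/\norm{\phi_k}^2_{L^2(\vol)}$ and differentiate under the integral. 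The smooth function $|\nabla\psi_t|^2$ has its own expansion. Uniform $C^1$ convergence of $\sum\psi_i\nabla\phi_i$ gives $|\nabla\psi_t|^2=\sum_{i,j}\psi_i\psi_j\langle\nabla\phi_i,\nabla\phi_j\rangle$ uniformly. Pairing with $\phi_k$ in $L^2(\vol)$ then yields the coefficient $\sum_{i,j}\psi_i\psi_j\langle\langle\nabla\phi_i,\nabla\phi_j\rangle,\phi_k\rangle_{L^2(\vol)}/\norm{\phi_k}^2_{L^2(\vol)}$.

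The second step is to compare coefficients. The geodesic equation holds modulo constants, and the constant mode is excluded from the frame. So by the uniqueness in Theorem \ref{thm:parallelization}, the equation is equivalent to the vanishing of the coefficient of each $\phi_k$, $k\geq1$. Concretely, a smooth function $f$ is constant if and only if $\langle f,\phi_k\rangle_{L^2(\vol)}=0$ for all $k\ge1$, by orthogonality and completeness of $\{1\}\cup\{\phi_k\}$. This produces exactly the second displayed system
$$\psi_k'=-\frac{1}{2\norm{\phi_k}^2_{L^2(\vol)}}\sum_{i,j}\psi_i\psi_j\big\langle\langle\nabla\phi_i,\nabla\phi_j\rangle,\phi_k\big\rangle_{L^2(\vol)}.$$

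The third step identifies this system with (\ref{geochristoffel}). The sum $\sum_{i,j}\psi_i\psi_j\Gamma^k_{ij}$ is symmetric in $(i,j)$, so it equals $\frac12\sum_{i,j}\psi_i\psi_j(\Gamma^k_{ij}+\Gamma^k_{ji})$. By (\ref{eq:christoffel-sym}) this is the right-hand side above with the sign reversed, and it is independent of $\mu_t$. The symmetrization requires absolute convergence of the double series. This follows from the rapid decay of the $\psi_i$ combined with the polynomial bounds $\norm{\phi_i}_{C^1}\le C(1+\lambda_i)^{N}\norm{\phi_i}_{L^2}$ already used in the proof of Theorem \ref{thm:parallelization}.

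The main obstacle is the interchange of limits: the termwise time derivative, the double-series expansion of $|\nabla\psi_t|^2$, and the reordering in the symmetrization. I will handle all three with the single estimate of uniform rapid decay of the $\psi_k(t)$ on compact time intervals, which comes from $\psi_k=(-\lambda_k)^{-m}\langle\Delta^m\psi_t,\phi_k\rangle/\norm{\phi_k}^2$ and the smoothness of $\psi_t$ in $t$.
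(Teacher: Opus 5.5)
Your argument is correct, but it runs in the opposite direction from the paper's.

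\textbf{The paper's route.} The paper starts from the proof of Proposition \ref{prop:parallel} with $\eta_t=\psi_t$. There the curve is a geodesic if and only if $V_{\dot\psi_t}+\overline{\nabla}_{V_{\psi_t}}V_{\psi_t}=0$. It expands the covariant derivative as $\sum_k\big(\sum_{i,j}\psi_i\psi_j\Gamma^k_{ij}\big)V_{\phi_k}$ and reads off (\ref{geochristoffel}) from the uniqueness of coefficients in Theorem \ref{thm:parallelization}. Only then does it symmetrize with (\ref{eq:christoffel-sym}) to reach the closed system. It notes afterwards, as a remark, that this system is the Hamilton--Jacobi equation written in coefficients.

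\textbf{Your route.} You start from Corollary \ref{corollarygeodesic}. You get the closed system by pairing $\dot\psi_t+\frac12|\nabla\psi_t|^2$ with each $\phi_k$ in $L^2(\vol)$, and you recover (\ref{geochristoffel}) by symmetrizing backwards.

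\textbf{What each buys.} Your route proves the substantive half with lighter tools. It uses only scalar orthogonality and completeness of $\{1\}\cup\{\phi_k\}$ and $C^1$ bounds on eigenfunctions. It needs no control of the nonlocal $\Gamma^k_{ij}$ and no use of the vector-field basis, and it turns the paper's closing remark into the proof. The paper's route instead presents (\ref{geochristoffel}) as the frame form of $D_t\dot\mu_t=0$. This keeps it in line with the parallel transport equation, where the antisymmetric, $\mu$-dependent part of $\Gamma$ does matter.

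\textbf{One correction to your third step.} The $C^1$ bound on the $\phi_i$ does not by itself control $\sum_{i,j}\psi_i\psi_j\Gamma^k_{ij}$. By (\ref{eq:christoffel-explicit}) you also need $C^2$ bounds on $\phi_j$, together with $|\langle G_\mu d^*_\mu\alpha,\phi_k\rangle_{L^2(\vol)}|\leq C_\mu\norm{\alpha}_{L^2(\mu)}\norm{\phi_k}_{L^2(\vol)}$. This estimate follows from $dG_\mu d^*_\mu=\Pi_\mu$, a Poincar\'e inequality, and $\phi_k\perp1$ in $L^2(\vol)$. It is the same polynomial growth of $\Gamma^k_{ij}$ that the paper invokes. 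In your argument it is needed only at the last step.
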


    \begin{proof}
	By the proof of Proposition \ref{prop:parallel} with $\eta_t=\psi_t$, the curve is a geodesic if and only if $V_{\dot{\psi}_t}+\overline{\nabla}_{V_{\psi_t}}V_{\psi_t}=0$, with $\psi_t$ frozen in the second term. Expanding $\psi_t$ in the frame, $\overline{\nabla}_{V_{\psi_t}}V_{\psi_t}=\sum_k\big(\sum_{i,j}\psi_i\psi_j\Gamma^k_{ij}\big)V_{\phi_k}$, where the double series converges absolutely because the coefficients of a smooth potential decay faster than any power of $\lambda_i$, while the $\Gamma^k_{ij}$ grow at most polynomially. The uniqueness in Theorem \ref{thm:parallelization} gives (\ref{geochristoffel}), and the second form follows from (\ref{eq:christoffel-sym}), since $\psi_i\psi_j$ is symmetric in $i$ and $j$.
    \end{proof}

The second form of the system is the Hamilton-Jacobi equation (\ref{edogeodesica}) written in the coefficients of the frame, since its right hand side is the $k$-th coefficient of $-\frac12|\nabla\psi_t|^2$. It is therefore not a new description of the geodesics, and it inherits the limitations recorded after Corollary \ref{corollarygeodesic}. The measure enters the Christoffel symbols only through their antisymmetric part, which is invisible to geodesics but not to parallel transport, as Proposition \ref{prop:parallel} shows.

%%%%%%%%%%%%%%%%%%%%%%%%%%%%%%%%%%%%%%%%%%%%%%%%%%%%%%%%%%%%%%%%%%%%%%%%%%%%%%%%%%%%%%%%%%%%%%%%%%%%%%%%%%%%%%%%%%%%%%%%%%%%%%%%%%%%%%%%%%%%%%%%%%%%%%

\subsection{Curvature}

In \cite[Section 5]{lott2006some} Lott computes the Ricci and sectional curvatures of $P^\infty(M)$ in terms of the vector fields $V_\phi$. Here we present these results in our coordinate-free setting.

Given $\phi,\psi\in C^\infty(M)$, define $T_{\phi\psi}\in\Omega^1_{L^2}(M,\mu)$ by
\begin{equation}\label{defT}
    T_{\phi\psi}=(I-\Pi_\mu)(\flat(\nabla^2\psi(\nabla\phi))).
\end{equation}
Remember that  $\Pi_\mu$ is the orthogonal projection onto $\overline{\operatorname{Im}(d)}\subset \Omega^1_{L^2}(M,\mu)$.

The tensor $T$ has a simple meaning. A one-form $\alpha\in\Omega^1_{L^2}(M,\mu)$ is orthogonal to $\operatorname{Im}(d)$ exactly when $\int_M\langle d\varphi,\alpha\rangle\,d\mu=0$ for every $\varphi\in C^\infty(M)$, that is, when the vector field $Z$ with $\flat(Z)=\alpha$ satisfies $\nabla\cdot(\mu Z)=0$ in the sense of distributions. Reading $Z$ as a velocity field, the continuity equation (\ref{conservation}) says that the corresponding flow of mass leaves $\mu$ unchanged to first order. Thus $\operatorname{Im}(d)^\perp$ consists of the infinitesimal motions of $M$ that do not move $\mu$. These are the directions quotiented out in the definition of $T_\mu P^\infty(M)$, and only the component $\Pi_\mu\alpha$ of a one-form is seen by the geometry of $P^\infty(M)$. The tensor $T_{\phi\psi}$ is precisely the invisible component of $\flat(\nabla^2\psi(\nabla\phi))$, and in the formula for the sectional curvature below it enters through the term $3|T_{\phi_1\phi_2}|^2_\mu$, which plays the role of the O'Neill term $3|A_XY|^2$ of a Riemannian submersion.

\begin{lemma}\label{lem:T-antisym}
	$T_{\phi\psi}+T_{\psi\phi}=0$.
\end{lemma}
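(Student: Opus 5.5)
Since $I-\Pi_\mu$ is linear, the claim $T_{\phi\psi}+T_{\psi\phi}=0$ is equivalent to showing that the one-form
$$\flat\big(\nabla^2\psi(\nabla\phi)\big)+\flat\big(\nabla^2\phi(\nabla\psi)\big)$$
lies in $\overline{\operatorname{Im}(d)}\subset\Omega^1_{L^2}(M,\mu)$. If it does, $\Pi_\mu$ fixes it and $I-\Pi_\mu$ annihilates it. So the plan is to show this sum is exact.

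The key step is the pointwise identity
$$d\langle\nabla\phi,\nabla\psi\rangle=\flat\big(\nabla^2\phi(\nabla\psi)\big)+\flat\big(\nabla^2\psi(\nabla\phi)\big).$$
To prove it, take any vector field $Z$ and use metric compatibility of the Levi-Civita connection of $M$:
$$Z\langle\nabla\phi,\nabla\psi\rangle=\langle\nabla_Z\nabla\phi,\nabla\psi\rangle+\langle\nabla\phi,\nabla_Z\nabla\psi\rangle=\nabla^2\phi(Z,\nabla\psi)+\nabla^2\psi(Z,\nabla\phi).$$
Then use the symmetry of the Hessian, already invoked in the proof of Lemma \ref{Lemma4}, to rewrite this as
$$\langle Z,\nabla^2\phi(\nabla\psi)\rangle+\langle Z,\nabla^2\psi(\nabla\phi)\rangle.$$
Since $Z$ is arbitrary, the identity follows. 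This is the same computation that underlies the symmetric part (\ref{eq:christoffel-sym}), or equivalently (\ref{theo3.3}) together with Lemma \ref{Lemma4}.

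Because $\phi,\psi\in C^\infty(M)$, the function $\langle\nabla\phi,\nabla\psi\rangle$ is smooth, so the sum is $d$ of a smooth function and lies in $\operatorname{Im}(d)$. It is bounded because $M$ is compact, so it belongs to $\Omega^1_{L^2}(M,\mu)$. Applying $I-\Pi_\mu$ and using linearity then gives the lemma.

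There is no real obstacle here. The only points needing care are that $\Pi_\mu$ is the orthogonal projection in the weighted space $L^2(\mu)$, which does not affect the argument since exact forms are fixed by it for any $\mu$, and that the symmetry of $\nabla^2$ is used correctly when converting $\nabla^2\phi(Z,\nabla\psi)$ into $\langle Z,\nabla^2\phi(\nabla\psi)\rangle$.
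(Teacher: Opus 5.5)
Your proof is correct and is the same as the paper's. Both rest on the identity $\flat\big(\nabla^2\psi(\nabla\phi)+\nabla^2\phi(\nabla\psi)\big)=d\langle\nabla\phi,\nabla\psi\rangle$ together with the fact that $I-\Pi_\mu$ annihilates $\operatorname{Im}(d)$; you also supply the metric-compatibility and Hessian-symmetry computation behind that identity, which the paper states without derivation.
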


\begin{proof}
    It follows from the facts that $I-\Pi_\mu$ projects away from $\operatorname{Im}(d)$ and that
    $$\flat(\nabla^2\psi(\nabla\phi)+\nabla^2\phi(\nabla\psi))=d\langle\nabla\phi,\nabla\psi\rangle.$$
\end{proof}

\begin{theo}\label{riemanniancurvature}
    Given $\phi_1,\phi_2,\phi_3,\phi_4\in C^\infty(M)$, the Riemannian curvature operator $\overline{R}$ of $P^\infty(M)$ is given by 
    \begin{align*}
        \langle \overline{R}(V_{\phi_1},V_{\phi_2})V_{\phi_3},V_{\phi_4}\rangle_\mu & = \int_M \langle R(\nabla\phi_1,\nabla\phi_2)\nabla\phi_3,\nabla\phi_4\rangle d\mu -2\langle T_{\phi_1\phi_2},T_{\phi_3\phi_4}\rangle_\mu\\
    &+\langle T_{\phi_2\phi_3},T_{\phi_1\phi_4}\rangle_\mu-\langle T_{\phi_1\phi_3},T_{\phi_2\phi_4}\rangle_\mu.
    \end{align*}
          
\end{theo}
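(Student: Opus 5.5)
We compute the curvature directly from its definition on constant vector fields,
$$\overline{R}(V_{\phi_1},V_{\phi_2})V_{\phi_3}=\overline{\nabla}_{V_{\phi_1}}\overline{\nabla}_{V_{\phi_2}}V_{\phi_3}-\overline{\nabla}_{V_{\phi_2}}\overline{\nabla}_{V_{\phi_1}}V_{\phi_3}-\overline{\nabla}_{[V_{\phi_1},V_{\phi_2}]}V_{\phi_3}.$$
We pair it with $V_{\phi_4}$ and use metric compatibility to move the outer derivative:
$$\langle\overline{\nabla}_{V_{\phi_1}}\overline{\nabla}_{V_{\phi_2}}V_{\phi_3},V_{\phi_4}\rangle_\mu=V_{\phi_1}\langle\overline{\nabla}_{V_{\phi_2}}V_{\phi_3},V_{\phi_4}\rangle_\mu-\langle\overline{\nabla}_{V_{\phi_2}}V_{\phi_3},\overline{\nabla}_{V_{\phi_1}}V_{\phi_4}\rangle_\mu .$$
This avoids differentiating the $\mu$-dependent potential $G_\mu d^*_\mu\flat(\cdot)$ of Lemma \ref{Lemma4}. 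The point is that, by (\ref{prodconnect}), $\langle\overline{\nabla}_{V_{\phi_2}}V_{\phi_3},V_{\phi_4}\rangle_\mu=F_{h}(\mu)$ with $h=\langle\nabla\phi_2,\nabla^2\phi_3(\nabla\phi_4)\rangle$ a fixed smooth function. Its derivative along $V_{\phi_1}$ is therefore $\int_M\langle\nabla\phi_1,\nabla h\rangle\,d\mu$, by the formula $V_\psi F_\varphi=F_{\langle\nabla\varphi,\nabla\psi\rangle}$.

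For the product terms we use Lemma \ref{Lemma4}: $\overline{\nabla}_{V_{\phi_i}}V_{\phi_j}$ corresponds to the one-form $\Pi_\mu\flat(\nabla^2\phi_j(\nabla\phi_i))$. Writing $A_{ij}=\flat(\nabla^2\phi_j(\nabla\phi_i))$, the product term becomes
$$\langle \Pi_\mu A_{23},\Pi_\mu A_{14}\rangle_\mu=\langle A_{23},A_{14}\rangle_\mu-\langle T_{23},T_{14}\rangle_\mu ,$$
where we write $T_{ij}$ for $T_{\phi_i\phi_j}$ and use (\ref{defT}). Similarly, for the bracket term, $[V_{\phi_1},V_{\phi_2}]=\overline{\nabla}_{V_{\phi_1}}V_{\phi_2}-\overline{\nabla}_{V_{\phi_2}}V_{\phi_1}$ has the one-form $\Pi_\mu(A_{12}-A_{21})$. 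By Lemma \ref{lem:T-antisym} and the identity $A_{12}+A_{21}=d\langle\nabla\phi_1,\nabla\phi_2\rangle$, we get $A_{12}-A_{21}=2T_{12}+(\text{exact})$. Hence the bracket has potential $\chi$ with $d\chi=\Pi_\mu(A_{12}-A_{21})$. Then $\langle\overline{\nabla}_{V_\chi}V_{\phi_3},V_{\phi_4}\rangle_\mu=\int\langle\nabla\chi,\nabla^2\phi_3(\nabla\phi_4)\rangle d\mu$ by (\ref{prodconnect}), which extends to non-constant $\chi$ by tensoriality in the lower slot. We rewrite it as $\langle \Pi_\mu(A_{12}-A_{21}),A_{43}\rangle_\mu$. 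Splitting $A_{43}=\Pi_\mu A_{43}+T_{43}$ and $A_{12}-A_{21}=\Pi_\mu(\cdot)+2T_{12}$ yields
$$\langle A_{12}-A_{21},A_{43}\rangle_\mu-2\langle T_{12},T_{43}\rangle_\mu=\langle A_{12}-A_{21},A_{43}\rangle_\mu+2\langle T_{12},T_{34}\rangle_\mu .$$

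Collecting terms, every $T$-free contribution is an integral against $\mu$ of pointwise expressions in third and second covariant derivatives of the $\phi_i$. These are
$$\int_M\langle\nabla\phi_1,\nabla\langle\nabla\phi_2,\nabla^2\phi_3(\nabla\phi_4)\rangle\rangle\,d\mu ,$$
its counterpart with $1\leftrightarrow 2$, the terms $\langle A_{23},A_{14}\rangle_\mu$ and $\langle A_{13},A_{24}\rangle_\mu$, and the bracket term. The pointwise computation, which is exactly the finite-dimensional computation of $\langle R(\nabla\phi_1,\nabla\phi_2)\nabla\phi_3,\nabla\phi_4\rangle$ for gradient fields via the Ricci identity $\nabla^3\phi_3(X,Y,\cdot)-\nabla^3\phi_3(Y,X,\cdot)=R(X,Y)\nabla\phi_3$, should collapse them to $\int_M\langle R(\nabla\phi_1,\nabla\phi_2)\nabla\phi_3,\nabla\phi_4\rangle d\mu$. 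Indeed, they are $\mu$-integrals of the same expression one gets for $\nabla$ on $M$ applied to gradient vector fields, no integration by parts against $\rho$ being needed. The $T$ terms assemble as $+\langle T_{23},T_{14}\rangle_\mu-\langle T_{13},T_{24}\rangle_\mu-2\langle T_{12},T_{34}\rangle_\mu$. For the last term we use $T_{43}=-T_{34}$, which gives $-2\langle T_{12},T_{34}\rangle_\mu$ with the sign convention of the statement. This is the asserted formula.

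The main obstacle is the bookkeeping in the pointwise third-derivative cancellation, together with the correct signs of the $T$ terms coming from the bracket. I would first check the result on the sanity case $\phi_3=\phi_1$, $\phi_4=\phi_2$, where it must reproduce the O'Neill-type term $3|T_{\phi_1\phi_2}|^2_\mu$ mentioned before the statement. A secondary point is justifying tensoriality of $\overline{\nabla}$ in its lower slot for the non-constant potential $\chi$. For this we use Theorem \ref{thm:parallelization}: $\chi$ is smooth by elliptic regularity, and its expansion converges in every $C^k$.
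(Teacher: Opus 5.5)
Your proposal is correct and follows the paper's proof essentially step for step. You use the same five-term expansion via metric compatibility, the same observation that $\langle\overline{\nabla}_{V_{\phi_2}}V_{\phi_3},V_{\phi_4}\rangle_\mu=F_h(\mu)$ for the derivative terms, and the same $\Pi_\mu$/$T$ splitting of the product and bracket terms, with matching signs. For the remaining pointwise identity, your remark already closes the argument: the five $T$-free integrands are literally the five terms of the same expansion of $\langle R(\nabla\phi_1,\nabla\phi_2)\nabla\phi_3,\nabla\phi_4\rangle$ on $M$, whereas the paper verifies this explicitly through the Leibniz rule and the Ricci identity.
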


\begin{proof}
	By (\ref{prodconnect}),
	$$V_{\phi_1}\langle\overline{\nabla}_{V_{\phi_2}}V_{\phi_3},V_{\phi_4}\rangle_\mu= V_{\phi_1}F_{\langle\nabla\phi_2,\nabla^2\phi_3(\nabla\phi_4)\rangle}(\mu)=F_{\langle \nabla\phi_1,\nabla\langle\nabla\phi_2,\nabla^2\phi_3(\nabla\phi_4)\rangle\rangle}(\mu).$$
    Analogously,
    $$V_{\phi_2}\langle\overline{\nabla}_{V_{\phi_1}}V_{\phi_3},V_{\phi_4} \rangle_\mu=F_{\langle \nabla\phi_2,\nabla\langle\nabla\phi_1,\nabla^2\phi_3(\nabla\phi_4)\rangle\rangle}(\mu).$$

        From Lemma \ref{Lemma4}, (\ref{defT}) and the definition of the metric of $P^\infty(M)$,
    \begin{align*}
    \langle\overline{\nabla}_{V_{\phi_2}}V_{\phi_3},\overline{\nabla}_{V_{\phi_1}}V_{\phi_4}\rangle_\mu &=\langle dG_\mu d_\mu^*\flat(\nabla^2\phi_3(\nabla \phi_2)), dG_\mu d_\mu^*\flat(\nabla^2\phi_4(\nabla \phi_1))\rangle_{L^2}\\
	&=\langle \Pi_\mu\flat(\nabla^2\phi_3(\nabla \phi_2)), \Pi_\mu\flat(\nabla^2\phi_4(\nabla \phi_1))\rangle_{L^2}\\
    &= \langle \flat(\nabla^2\phi_3(\nabla \phi_2) ),\flat(\nabla^2\phi_4(\nabla \phi_1))\rangle_{L^2}-\langle T_{\phi_2\phi_3},T_{\phi_1\phi_4}\rangle\\
    & =\int_M\langle \nabla^2\phi_3(\nabla \phi_2),\nabla^2\phi_4(\nabla \phi_1)\rangle d\mu-\langle T_{\phi_2\phi_3},T_{\phi_1\phi_4}\rangle \\
    &=F_{\langle \nabla^2\phi_3(\nabla \phi_2),\nabla^2\phi_4(\nabla \phi_1)\rangle}(\mu)-\langle T_{\phi_2\phi_3},T_{\phi_1\phi_4}\rangle.
    \end{align*}

    Similarly,
    $$\langle\overline{\nabla}_{V_{\phi_1}}V_{\phi_3},\overline{\nabla}_{V_{\phi_2}}V_{\phi_4}\rangle_\mu=F_{\langle \nabla^2\phi_3(\nabla \phi_1),\nabla^2\phi_4(\nabla \phi_2)\rangle}(\mu)-\langle T_{\phi_1\phi_3},T_{\phi_2\phi_4}\rangle$$

    Let us now calculate $\langle \overline{\nabla}_{[V_{\phi_1},V_{\phi_2}]}V_{\phi_3},V_{\phi_4}\rangle_\mu$. By (\ref{eq:bracket-potential}) we may write $[V_{\phi_1},V_{\phi_2}]=V_{\theta_\mu}$ with $$\theta_\mu=G_\mu d_\mu^*\flat(\nabla^2\phi_2(\nabla\phi_1)-\nabla^2\phi_1(\nabla\phi_2)).$$
    Thus, by (\ref{prodconnect}),
    \begin{align*}
        \langle \overline{\nabla}_{[V_{\phi_1},V_{\phi_2}]}V_{\phi_3},V_{\phi_4}\rangle_\mu & =\int_M\langle \nabla\theta_\mu,\nabla^2\phi_3(\nabla\phi_4)\rangle d\mu=\langle d\theta_\mu,\flat(\nabla^2\phi_3(\nabla\phi_4))\rangle_{L^2}\\
    &=\langle dG_\mu d_\mu^*\flat(\nabla^2\phi_2(\nabla\phi_1)-\nabla^2\phi_1(\nabla\phi_2)),\flat(\nabla^2\phi_3(\nabla\phi_4))\rangle_{L^2}\\
    &=\langle \Pi_\mu\flat(\nabla^2\phi_2(\nabla\phi_1)-\nabla^2\phi_1(\nabla\phi_2)),\Pi_\mu\flat(\nabla^2\phi_3(\nabla\phi_4))\rangle_{L^2}\\
        &=\int_M\langle\nabla^2\phi_2(\nabla\phi_1)-\nabla^2\phi_1(\nabla\phi_2),\nabla^2\phi_3(\nabla\phi_4)\rangle d\mu-\langle T_{\phi_1\phi_2},T_{\phi_4\phi_3}\rangle\\
    &+\langle T_{\phi_2\phi_1},T_{\phi_4\phi_3}\rangle\\
    &=\int_M\langle\nabla^2\phi_2(\nabla\phi_1)-\nabla^2\phi_1(\nabla\phi_2),\nabla^2\phi_3(\nabla\phi_4)\rangle d\mu+2\langle T_{\phi_1\phi_2},T_{\phi_3\phi_4}\rangle.
    \end{align*}

    The theorem then follows from the formula
    \begin{align*}
        \langle\overline{R}(V_{\phi_1},V_{\phi_2})V_{\phi_3},V_{\phi_4}\rangle_\mu
    &=V_{\phi_1}\langle\overline{\nabla}_{V_{\phi_2}}V_{\phi_3},V_{\phi_4}\rangle_\mu
    -\langle\overline{\nabla}_{V_{\phi_2}}V_{\phi_3},\overline{\nabla}_{V_{\phi_1}}V_{\phi_4}\rangle_\mu
    \\
    &-V_{\phi_2}\langle\overline{\nabla}_{V_{\phi_1}}V_{\phi_3},V_{\phi_4}\rangle_\mu+\langle\overline{\nabla}_{V_{\phi_1}}V_{\phi_3},\overline{\nabla}_{V_{\phi_2}}V_{\phi_4}\rangle_\mu\\
    &-\langle\overline{\nabla}_{[V_{\phi_1},V_{\phi_2}]}V_{\phi_3},V_{\phi_4}\rangle_\mu,
    \end{align*}
    with the convention $R(X,Y)=\nabla_X\nabla_Y-\nabla_Y\nabla_X-\nabla_{[X,Y]}$, both on $M$ and on $P^\infty(M)$. Substituting the five expressions above, the terms in $T$ give the last three terms of the statement, and it remains to show that the remaining integrands add up to $\langle R(\nabla\phi_1,\nabla\phi_2)\nabla\phi_3,\nabla\phi_4\rangle$. Write $X_i=\nabla\phi_i$ and let $H_i=\nabla^2\phi_i$, viewed as a symmetric endomorphism, so that $\nabla_YX_i=H_iY$. By the Leibniz rule,
    $$\langle X_1,\nabla\langle X_2,H_3X_4\rangle\rangle=\langle H_2X_1,H_3X_4\rangle+\langle(\nabla_{X_1}H_3)X_4,X_2\rangle+\langle H_3X_2,H_4X_1\rangle,$$
    and similarly with $1$ and $2$ exchanged. The terms $\langle H_3X_2,H_4X_1\rangle$ and $\langle H_3X_1,H_4X_2\rangle$ cancel against the second and the fourth terms of the formula, and $\langle H_2X_1-H_1X_2,H_3X_4\rangle$ cancels against the fifth. What remains is
    $$\langle(\nabla_{X_1}H_3)X_4,X_2\rangle-\langle(\nabla_{X_2}H_3)X_4,X_1\rangle=(\nabla_{X_1}\nabla^2\phi_3)(X_2,X_4)-(\nabla_{X_2}\nabla^2\phi_3)(X_1,X_4),$$
    by the symmetry of $\nabla^2\phi_3$. By the Ricci identity for the one-form $d\phi_3$, this equals
    $$-d\phi_3\big(R(X_1,X_2)X_4\big)=\langle R(X_1,X_2)X_3,X_4\rangle .$$

\end{proof}

\begin{corollary}
Fix $\mu\in P^\infty(M)$ and let $\phi_1,\phi_2\in C^\infty(M)$ be such that $\langle V_{\phi_i},V_{\phi_j}\rangle_\mu=\delta_{ij}$ for $i,j=1,2$. Then the sectional curvature of $P^\infty(M)$ at $\mu$ of the plane spanned by $V_{\phi_1},V_{\phi_2}$ is
$$\overline{K}(V_{\phi_1},V_{\phi_2})=\int_M K(\nabla\phi_1,\nabla\phi_2)\left(|\nabla\phi_1|^2|\nabla\phi_2|^2-\langle\nabla\phi_1,\nabla\phi_2\rangle^2\right)d\mu+3|T_{\phi_1\phi_2}|^2_\mu,$$
in which $K$ denotes the sectional curvature of $M$.
\end{corollary}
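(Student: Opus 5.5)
The plan is to specialize Theorem \ref{riemanniancurvature} to $\phi_3=\phi_2$, $\phi_4=\phi_1$. Since the frame is orthonormal at $\mu$, the sectional curvature is $\overline{K}(V_{\phi_1},V_{\phi_2})=\langle \overline{R}(V_{\phi_1},V_{\phi_2})V_{\phi_2},V_{\phi_1}\rangle_\mu$ with the convention $R(X,Y)=\nabla_X\nabla_Y-\nabla_Y\nabla_X-\nabla_{[X,Y]}$ used in the proof of that theorem. The denominator $|V_{\phi_1}|^2|V_{\phi_2}|^2-\langle V_{\phi_1},V_{\phi_2}\rangle^2$ equals $1$, so no normalization is needed.

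First I treat the base term. With the same convention on $M$, the integrand $\langle R(\nabla\phi_1,\nabla\phi_2)\nabla\phi_2,\nabla\phi_1\rangle$ equals $K(\nabla\phi_1,\nabla\phi_2)\big(|\nabla\phi_1|^2|\nabla\phi_2|^2-\langle\nabla\phi_1,\nabla\phi_2\rangle^2\big)$ pointwise, by the definition of sectional curvature. Where the two gradients are linearly dependent, both sides vanish, whatever value is assigned to $K$ there. Integrating against $\mu$ gives the first term of the statement.

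Next I collect the $T$ terms with indices $(1,2,2,1)$:
$$-2\langle T_{\phi_1\phi_2},T_{\phi_2\phi_1}\rangle_\mu+\langle T_{\phi_2\phi_2},T_{\phi_1\phi_1}\rangle_\mu-\langle T_{\phi_1\phi_2},T_{\phi_2\phi_1}\rangle_\mu .$$
By Lemma \ref{lem:T-antisym}, $T_{\phi\phi}=0$, so the middle term vanishes. Also $T_{\phi_2\phi_1}=-T_{\phi_1\phi_2}$, so the remaining terms become $2|T_{\phi_1\phi_2}|^2_\mu+|T_{\phi_1\phi_2}|^2_\mu=3|T_{\phi_1\phi_2}|^2_\mu$, which gives the stated formula.

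The main obstacle is sign bookkeeping, not analysis. I have to check that the ordering $\langle \overline R(X,Y)Y,X\rangle$ gives the positive sectional curvature under the stated convention, both on $P^\infty(M)$ and on $M$. The proof of the theorem uses the same convention on both, and it matches the form $\langle R(X_1,X_2)X_3,X_4\rangle$ obtained at its end. Once the convention is fixed consistently, the signs of the $T$ terms follow from Lemma \ref{lem:T-antisym} alone.
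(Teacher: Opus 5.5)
Your proposal is correct and is exactly the intended argument: the paper states this corollary without a separate proof, as the specialization $\phi_3=\phi_2$, $\phi_4=\phi_1$ of Theorem \ref{riemanniancurvature}, with Lemma \ref{lem:T-antisym} giving $T_{\phi\phi}=0$ and $T_{\phi_2\phi_1}=-T_{\phi_1\phi_2}$ so that the three $T$ terms collapse to $3|T_{\phi_1\phi_2}|^2_\mu$. Your sign convention, with sectional curvature read off as $\langle R(X,Y)Y,X\rangle$ under $R(X,Y)=\nabla_X\nabla_Y-\nabla_Y\nabla_X-\nabla_{[X,Y]}$, is also the one the paper uses later in Proposition \ref{prop:metric-curvature}.
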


\begin{corollary}
	If $M$ has nonnegative sectional curvature, then so does $P^\infty(M)$.
\end{corollary}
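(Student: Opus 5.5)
The statement follows from the sectional curvature formula of the preceding corollary, applied after reducing an arbitrary plane to an orthonormal pair of smooth potentials.

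Fix $\mu\in P^\infty(M)$ and a plane in $T_\mu P^\infty(M)$ spanned by $V_{\psi_1},V_{\psi_2}$, with $\psi_1,\psi_2\in C^\infty(M)$ and the two vectors linearly independent. First I apply Gram--Schmidt with respect to Otto's inner product (\ref{ottoprod}). This produces $\phi_1,\phi_2$, linear combinations of $\psi_1,\psi_2$ with real coefficients, hence still smooth, and it is the only preparation needed. Then $\langle V_{\phi_i},V_{\phi_j}\rangle_\mu=\delta_{ij}$, and since $\eta\mapsto V_\eta$ is linear, $V_{\phi_1},V_{\phi_2}$ span the same plane. The sectional curvature depends only on the plane, so the previous corollary applies to the pair $\phi_1,\phi_2$.

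The formula of that corollary has two terms, and I bound each from below by zero.

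\textbf{The integral term.} The factor
$$|\nabla\phi_1|^2|\nabla\phi_2|^2-\langle\nabla\phi_1,\nabla\phi_2\rangle^2$$
is pointwise nonnegative by Cauchy--Schwarz. The factor $K(\nabla\phi_1,\nabla\phi_2)$ is nonnegative by hypothesis wherever the gradients span a plane. Where they are linearly dependent, the Gram factor vanishes, so the product is to be read as $\langle R(\nabla\phi_1,\nabla\phi_2)\nabla\phi_2,\nabla\phi_1\rangle$, which is $0$ there. To avoid any ambiguity, I will write the integrand as this curvature expression directly, as it appears in Theorem \ref{riemanniancurvature} with $\phi_3=\phi_2$, $\phi_4=\phi_1$ and the sign convention fixed there. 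That expression is pointwise nonnegative when $M$ has nonnegative sectional curvature. Integrating against the positive measure $\mu$ gives a nonnegative number.

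\textbf{The O'Neill-type term.} The term $3|T_{\phi_1\phi_2}|^2_\mu$ is a squared $L^2(\mu)$ norm of a one-form, so it is nonnegative.

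Adding the two bounds gives $\overline K\ge 0$ for every plane at every $\mu$.

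I expect no real obstacle. The one point needing care is the sign bookkeeping between the convention $R(X,Y)=\nabla_X\nabla_Y-\nabla_Y\nabla_X-\nabla_{[X,Y]}$ and the order of the entries in the four-tensor. I will check that the corollary's integrand, written with that convention, is exactly $K(\nabla\phi_1,\nabla\phi_2)$ times the Gram determinant, and not its negative.
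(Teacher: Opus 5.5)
Your proposal is correct and follows the paper's argument. The paper also reads the result off the preceding corollary, using Cauchy--Schwarz for the Gram factor, the hypothesis $K\geq0$, and the nonnegativity of $3|T_{\phi_1\phi_2}|^2_\mu$. Your Gram--Schmidt reduction to an orthonormal pair, and your reading of the integrand as $\langle R(\nabla\phi_1,\nabla\phi_2)\nabla\phi_2,\nabla\phi_1\rangle$ where the gradients are dependent, make explicit two steps the paper leaves implicit.
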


\begin{proof}
	Immediate from the previous corollary, since $|\nabla\phi_1|^2|\nabla\phi_2|^2-\langle\nabla\phi_1,\nabla\phi_2\rangle^2\geq0$ by Cauchy-Schwarz and $|T_{\phi_1\phi_2}|^2_\mu\geq0$.
\end{proof}

%%%%%%%%%%%%%%%%%%%%%%%%%%%%%%%%%%%%%%%%%%%%%%%%%%%%%%%%%%%%%%%%%%%%%%%%%%%%%%%%%%%%%%%%%%%%%%%%%%%%%%%%%%%%%%%%%%%%%%%%%%%%%%%%%%%%%%%%%%%%%%%%%%%%%%%%%%%%%%%%%%%%%%%%%%%%%%%%%%%%%%%%%%%%%%%%%%%%%%%%%%%%%%%%%%%%%%%

\section{The Riemannian Geometry of \texorpdfstring{$P^\infty(S^1)$}{P(S1)}}\label{secExample}

We now apply Theorem \ref{thm:parallelization} to $M=S^1$, where the Fourier basis provides the global frame. We compute the matrix coefficients of the metric, the connection coefficients, the geodesic equation and the curvature, and we show that $P^\infty(S^1)$ is not flat.

\subsection{Riemannian Metric and its Matrix Coefficients}

Here, we specialize the constructions of the previous sections to $M=S^1$ with the round metric, writing $\mu=\rho\,d\theta$ with $\int_0^{2\pi}\rho\,d\theta=1$.

For the circle, the basis of $C^\infty(S^1)$ given by the Peter-Weyl theorem is the Fourier basis $\{c_n,s_n:n=1,2,\cdots\}$, with $c_n(\theta):=\cos(n\theta)$ and $s_n(\theta):=\sin(n\theta)$, together with the constants, which are quotiented out in $T_\mu P^\infty(S^1)$. It consists of eigenfunctions of the Laplacian, $c_n''=-n^2c_n$ and $s_n''=-n^2s_n$, and together with the constants it is an orthogonal basis of $L^2(d\theta)$, so it satisfies the hypothesis of Theorem \ref{thm:parallelization}. Moreover, if $\psi\in C^\infty(S^1)$ has Fourier coefficients $\widehat{\psi}(n)$, repeated integration by parts gives $|\widehat{\psi}(n)|=O(n^{-k})$ for every $k$, so that the Fourier series of $\psi$ converges in every $C^k$ norm. Hence $\{V_{c_n},V_{s_n}\}_{n\geq1}$ is a global frame for $TP^\infty(S^1)$, and it is direct to compute the coefficients of the Riemannian metric in it.

Throughout this section every tangent vector is written as $V_\psi$ with $\psi\in C^\infty(S^1)$, and the rapid decay just recorded is what makes the series below meaningful. The coefficients appearing in the geodesic equation and in the curvature formula grow polynomially in the frequencies, so that all the series in question converge absolutely for smooth potentials. None of them is continuous for the $L^2(\mu)$ norm, and accordingly the formulas that follow are statements about smooth potentials rather than about the completion of $T_\mu P^\infty(S^1)$.

\begin{prop}\label{matrixcoefficients}
	Fix $\mu\in P^\infty(S^1)$ with density $\rho\in C^\infty(S^1)$, and write its Fourier series as
	\begin{equation}\label{Fourierrho}
		\rho(\theta)=\frac{1}{2\pi}+\sum_{k=1}^\infty\big(a_kc_k(\theta)+b_ks_k(\theta)\big).
	\end{equation}
	Set $A_0:=1$, $B_0:=0$ and, for $j\geq1$, $A_j:=\int_0^{2\pi}c_j\,d\mu=\pi a_j$ and $B_j:=\int_0^{2\pi}s_j\,d\mu=\pi b_j$. Then the matrix coefficients of the Riemannian metric of $P^\infty(S^1)$ at $\mu$ are
	\begin{align}
		\label{metric}
		\begin{split}
			\langle V_{c_n},V_{c_{m}}\rangle_\mu &= \frac{nm}{2}\big(A_{|n-m|}-A_{n+m}\big),
			\\
			\langle V_{s_n},V_{s_m}\rangle_\mu &= \frac{nm}{2}\big(A_{|n-m|}+A_{n+m}\big),\\
			\langle V_{c_n},V_{s_m}\rangle_\mu &= -\frac{nm}{2}\big(B_{n+m}+\operatorname{sgn}(n-m)B_{|n-m|}\big).
		\end{split}
	\end{align}
	In particular the matrix is diagonal only at the uniform measure $\vol$, where it reduces to $\delta_{nm}n^2/2$, with $\delta_{nm}$ the Kronecker delta.
\end{prop}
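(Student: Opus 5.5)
The plan is to compute the three integrals in (\ref{matrixmetric}) directly. On $S^1$ the gradient of a function is its derivative, so $\langle V_f,V_g\rangle_\mu=\int_0^{2\pi}f'g'\,\rho\,d\theta$. Here $c_n'=-n s_n$ and $s_n'=n c_n$, which gives
$$\langle V_{c_n},V_{c_m}\rangle_\mu=nm\int s_ns_m\,d\mu,\qquad \langle V_{s_n},V_{s_m}\rangle_\mu=nm\int c_nc_m\,d\mu,\qquad \langle V_{c_n},V_{s_m}\rangle_\mu=-nm\int s_nc_m\,d\mu.$$

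Next I apply the product-to-sum identities:
$$s_ns_m=\tfrac12(c_{n-m}-c_{n+m}),\qquad c_nc_m=\tfrac12(c_{n-m}+c_{n+m}),\qquad s_nc_m=\tfrac12(s_{n+m}+s_{n-m}).$$
Each integral then reduces to moments of $\mu$. Since $c$ is even and $s$ is odd, $\int c_{n-m}\,d\mu=A_{|n-m|}$ and $\int s_{n-m}\,d\mu=\operatorname{sgn}(n-m)B_{|n-m|}$. When $n=m$ the conventions $A_0=\int d\mu=1$ and $B_0=0$ give the right values, because $c_0=1$ and $s_0=0$.

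It remains to identify $A_j=\pi a_j$ and $B_j=\pi b_j$. I plug in the Fourier series (\ref{Fourierrho}) and use the orthogonality relations $\int_0^{2\pi}c_jc_k\,d\theta=\pi\delta_{jk}$ and $\int_0^{2\pi}c_js_k\,d\theta=0$ for $j,k\geq1$. The integral of $c_j$ against the constant term vanishes. Interchanging sum and integral is justified by the uniform convergence of the Fourier series of smooth $\rho$. Substituting these moments yields (\ref{metric}).

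For the final claim, at $\rho=1/(2\pi)$ all $A_j,B_j$ with $j\ge1$ vanish. The formulas then reduce to $\frac{nm}{2}\delta_{nm}$ for the $cc$ and $ss$ entries, and the mixed entries vanish, giving $\delta_{nm}n^2/2$.

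Conversely, suppose the matrix is diagonal. The off-diagonal entry $\langle V_{c_n},V_{c_m}\rangle_\mu$ with $n\neq m$ gives $A_{|n-m|}=A_{n+m}$. Similarly, the entry $\langle V_{s_n},V_{s_m}\rangle_\mu$ gives $A_{|n-m|}=-A_{n+m}$. Together these force $A_{|n-m|}=0$, and every $j\ge1$ arises as $|n-m|$.

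The main obstacle is the $B_j$, which I expect to be the only delicate bookkeeping step. Diagonality includes $\langle V_{c_n},V_{s_m}\rangle_\mu=0$ for all $n,m$, including $n=m$, where the entry is $-\frac{n^2}{2}B_{2n}$. Taking $m=n+j$ gives $B_{2n+j}=B_j$ for all $n\ge1$. Since $\rho$ is smooth, $B_k\to0$, so $B_j=0$ for all $j$. Hence $\rho$ is constant, and $\mu=\vol$.
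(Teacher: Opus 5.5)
Your proposal is correct and follows the paper's proof: you reduce each entry to $nm\int s_ns_m\,d\mu$, $nm\int c_nc_m\,d\mu$ or $-nm\int s_nc_m\,d\mu$ using $c_n'=-ns_n$ and $s_n'=nc_n$, and then integrate the product-to-sum identities against $\mu$. Your argument for the ``diagonal only at $\vol$'' claim is also correct, and it supplies a step the paper states without proof. The $cc$ and $ss$ off-diagonal entries together kill every $A_j$ with $j\geq1$, and the relation $B_{2n+j}=B_j$, combined with the decay of the Fourier coefficients of $\rho$, kills every $B_j$.
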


\begin{proof}
	Since $\nabla c_n=-n\,s_n$ and $\nabla s_n=n\,c_n$, we have $\langle V_{c_n},V_{c_m}\rangle_\mu=nm\int_0^{2\pi}s_ns_m\,d\mu$. The product-to-sum identity $s_ns_m=\tfrac12(c_{|n-m|}-c_{n+m})$ then gives the first formula, and the other two follow in the same way from $c_nc_m=\tfrac12(c_{|n-m|}+c_{n+m})$ and $s_nc_m=\tfrac12(s_{n+m}+s_{n-m})$.
\end{proof}

%%%%%%%%%%%%%%%%%%%%%%%%%%%%%%%%%%%%%%%%%%%%%%%%%%%%%%%%%%%%%%%%%%%%%%%%%%%%%%%%%%%%%%%%%%%%%%%%%%%%%%%%%%%%%%%%%%%%%%%%%%%%%%%%%%%%%%%%%%%%%%%%%%%%%%%%%%%%%%%%%%%%%%%%%%%%%%%%%%%%%%%%%%%%%%%%%%%%%%%%%%%%%%%%%%%%%%%%%%%%%%%%%%%%%%%%%%%%%%%%%%%%%%%%%%%%%%%%%%%%%%%%%%%%%%%%%%%%%%%%%%%%%%%%%%%%%%%%%%%%%%%%%%%%%%%%%%%%%%%%%%%%

\subsection{Connection Coefficients}

We now compute, in the Fourier frame, the connection coefficients with lower index. The symbols $\Gamma^k_{ij}$ of (\ref{Christoffel}) are recovered from them by raising the index with $\overline{g}(\mu)$, which is immediate at the uniform measure and is carried out in Corollary \ref{cor:christoffel-vol}.

\begin{prop}\label{prop:connection-coefficients}
		Let $\mu=\rho\,d\theta\in P^\infty(S^1)$ and set $A_p:=\int_0^{2\pi}c_p\,d\mu$ and $B_p:=\int_0^{2\pi}s_p\,d\mu$, so that $A_0=1$, $B_0=0$ and $A_p=\pi a_p$, $B_p=\pi b_p$ for $p\geq1$. For $\varphi,\chi,\eta$ in the Fourier frame $\{c_p,s_p\}_{p\geq1}$, write $\Gamma_{\varphi\chi,\eta}:=\langle\overline{\nabla}_{V_{\varphi}}V_{\chi},V_{\eta}\rangle_\mu$. Then
	$$\Gamma_{\varphi\chi,\eta}=\int_0^{2\pi}\varphi'\,\chi''\,\eta'\,\rho\,d\theta,$$
	a finite combination of the $A_p$ and $B_p$. Explicitly, for frequencies $n$, $m$ and $l$,
	\begin{align*}
		\Gamma_{c_nc_m,c_l} &= -\frac{nlm^2}{4}\big(A_{|n-l-m|}+A_{|n-l+m|}-A_{|n+l-m|}-A_{n+l+m}\big),\\
		\Gamma_{s_nc_m,s_l} &= -\frac{nlm^2}{4}\big(A_{|n-l-m|}+A_{|n-l+m|}+A_{|n+l-m|}+A_{n+l+m}\big),\\
		\Gamma_{c_ns_m,s_l} &= \frac{nlm^2}{4}\big(A_{|n-l-m|}-A_{|n-l+m|}+A_{|n+l-m|}-A_{n+l+m}\big),\\
		\Gamma_{s_ns_m,c_l} &= \frac{nlm^2}{4}\big(A_{|l-n-m|}-A_{|l-n+m|}+A_{|n+l-m|}-A_{n+l+m}\big),
	\end{align*}
	and the four remaining families of index types are obtained in the same way, with the $B_p$ in place of the $A_p$.
\end{prop}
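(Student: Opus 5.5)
The plan is to start from the general formula (\ref{prodconnect}), which gives $\langle\overline{\nabla}_{V_{\varphi}}V_{\chi},V_{\eta}\rangle_\mu=\int_M\langle\nabla\varphi,\nabla^2\chi(\nabla\eta)\rangle\,d\mu$. On $S^1$ with the round metric, and with arclength coordinate $\theta$, three facts simplify this. The gradient of a function $f$ is $f'\,\partial_\theta$. The Hessian is the endomorphism of multiplication by $\chi''$. The metric is $d\theta^2$. Hence the integrand is $\varphi'\chi''\eta'$ and $d\mu=\rho\,d\theta$, which gives the first displayed identity at once.

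For the explicit formulas, we substitute the frame derivatives:
\begin{itemize}
\item $c_n'=-n s_n$ and $s_n'=n c_n$;
\item $c_m''=-m^2c_m$ and $s_m''=-m^2 s_m$.
\end{itemize}
Each case then reduces to $\pm nlm^2\int_0^{2\pi}u_n v_m w_l\,d\mu$, where $u,v,w\in\{c,s\}$. For instance, in $\Gamma_{c_nc_m,c_l}$ the integrand is $(-ns_n)(-m^2c_m)(-ls_l)\rho$, so
$$\Gamma_{c_nc_m,c_l}=-nlm^2\int s_nc_ms_l\,d\mu.$$
Next we expand the triple product with the product-to-sum identities. For example, $s_ns_l=\tfrac12(c_{n-l}-c_{n+l})$, and then $c_{n-l}c_m=\tfrac12(c_{n-l-m}+c_{n-l+m})$. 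This writes the integrand as $\tfrac14$ times a signed sum of four cosines or four sines, with frequencies $n\pm l\pm m$. Integrating against $\mu$ turns each $c_p$ into $A_{|p|}$, since cosine is even. Each $s_p$ becomes $\operatorname{sgn}(p)B_{|p|}$, with the conventions $A_0=1$ and $B_0=0$. This yields a finite combination of the $A_p,B_p$, and in particular the stated formula for $\Gamma_{c_nc_m,c_l}$.

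The remaining three listed cases, and the four $B$-cases, are handled in the same way. Only the combination of signs changes, coming from $c'=-s$ versus $s'=c$ and from the two product identities. The triples with an odd number of sine factors integrate to $B$'s, and those with an even number integrate to $A$'s.

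The main obstacle is bookkeeping rather than anything conceptual. We must track the signs correctly when a frequency such as $n-l-m$ is negative, which requires $\operatorname{sgn}$ factors for the sine terms but none for the cosine terms. We must also order the frequencies consistently with the displayed formulas, for instance writing $|l-n-m|$ versus $|n-l-m|$, which are equal. I would verify each case by checking a few small values of $n$, $m$, $l$ at the uniform measure, where only $A_0$ survives. I would also use the symmetry check (\ref{eq:christoffel-sym}): $\Gamma_{\varphi\chi,\eta}+\Gamma_{\chi\varphi,\eta}$ must be independent of the antisymmetric structure and must equal $\int\langle\nabla\langle\nabla\varphi,\nabla\chi\rangle,\nabla\eta\rangle\,d\mu$.
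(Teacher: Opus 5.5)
Your proposal is correct and follows the paper's proof: use (\ref{prodconnect}) to reduce to $\int_0^{2\pi}\varphi'\chi''\eta'\,d\mu$ in dimension one, substitute $\chi''=-m^2\chi$ and the frame derivatives, expand the triple product by the product-to-sum identities, and integrate term by term against $\mu$. Your $\operatorname{sgn}(p)B_{|p|}$ convention for sine terms at negative frequency is the sign bookkeeping that the paper's phrase ``with the $B_p$ in place of the $A_p$'' leaves implicit.
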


\begin{proof}
	The first identity is (\ref{prodconnect}) in dimension one, where $\nabla^2\chi(\nabla\eta)=\chi''\eta'$. Every element of the Fourier frame of frequency $m$ satisfies $\chi''=-m^2\chi$, so it remains to expand $\varphi'\eta'\chi$ by the product-to-sum formulas and integrate term by term against $\mu$. For instance, $c_n'c_l'=nl\,s_ns_l=\frac{nl}{2}(c_{n-l}-c_{n+l})$, and multiplying by $c_m$ and integrating gives the first identity. The others are analogous.
\end{proof}

\begin{corollary}\label{cor:geodesic-lower}
	Index the Fourier frame as $\{\phi_i\}_{i\geq1}$, so that each $\phi_i$ is some $c_p$ or $s_p$, and write $\overline{g}_{ij}(\mu)=\langle V_{\phi_i},V_{\phi_j}\rangle_\mu$ and $\Gamma_{ij,k}=\Gamma_{\phi_i\phi_j,\phi_k}$. A smooth curve $\mu_t\in P^\infty(S^1)$ whose velocity field is $V_{\psi_t}$ with $\psi_t=\sum_i\psi_i(t)\phi_i$ is a geodesic if and only if
	$$\sum_l\overline{g}_{kl}(\mu_t)\,\psi_l'(t)+\sum_{i,j}\psi_i(t)\psi_j(t)\,\Gamma_{ij,k}=0$$
	for every $k$, where $\overline{g}_{kl}$ is given by Proposition \ref{matrixcoefficients} and $\Gamma_{ij,k}$ by Proposition \ref{prop:connection-coefficients}.
\end{corollary}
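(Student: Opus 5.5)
The plan is to start from the intrinsic geodesic criterion in the proof of Corollary \ref{cor:geochristoffel}. The curve $\mu_t$ with velocity $V_{\psi_t}$ is a geodesic if and only if
$$V_{\dot\psi_t}+\overline{\nabla}_{V_{\psi_t}}V_{\psi_t}=0,$$
with $\psi_t$ frozen in the second term. Rather than expanding this vector in the frame, which would require the upper-index symbols $\Gamma^k_{ij}$ and hence an inversion of $\overline{g}(\mu_t)$, I pair it with $V_{\phi_k}$ under Otto's metric at $\mu_t$ for each $k$.

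By Theorem \ref{thm:parallelization}(1) and the non-degeneracy of the metric on $T_{\mu_t}P^\infty(S^1)$, a tangent vector $W$ vanishes if and only if $\langle W,V_{\phi_k}\rangle_{\mu_t}=0$ for every $k$. The frame spans a dense subspace of the Hilbert space $T_{\mu_t}P(S^1)$, so pairing against it detects zero. The geodesic condition is therefore equivalent to the family of scalar equations
$$\langle V_{\dot\psi_t},V_{\phi_k}\rangle_{\mu_t}+\langle\overline{\nabla}_{V_{\psi_t}}V_{\psi_t},V_{\phi_k}\rangle_{\mu_t}=0,\qquad k\geq1.$$

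The two terms are then expanded as follows.

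\emph{First term.} I write $\dot\psi_t=\sum_l\psi_l'\phi_l$. This series converges in every $C^k$, since the coefficients of the smooth function $\dot\psi_t$ decay rapidly, which uses the smooth dependence on $t$ and Theorem \ref{thm:parallelization}(2). By continuity of the inner product in $L^2(\mu_t)$, the first term equals $\sum_l\psi_l'\,\overline{g}_{lk}(\mu_t)$.

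\emph{Second term.} I use the trilinear formula (\ref{prodconnect}), which on $S^1$ reads $\int\psi'\psi''\phi_k'\rho\,d\theta$, as in Proposition \ref{prop:connection-coefficients}. Substituting $\psi=\sum_i\psi_i\phi_i$ in both slots gives $\sum_{i,j}\psi_i\psi_j\Gamma_{ij,k}$.

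Symmetry of $\overline{g}$ then yields the stated equation.

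The only delicate point is justifying the interchange of the double sum with the integral in the second term. I plan to handle it by absolute convergence. By Proposition \ref{prop:connection-coefficients}, $|\Gamma_{ij,k}|\le C\,n_in_j^2n_k$, since $|A_p|,|B_p|\le1$. The coefficients $\psi_i$ decay faster than any power of the frequency. Hence $\sum_{i,j}|\psi_i\psi_j\Gamma_{ij,k}|<\infty$ for each fixed $k$. Equivalently, $\psi'$ and $\psi''$ are uniform limits of their partial Fourier sums, which gives convergence of the integrand in $C^0$ directly.
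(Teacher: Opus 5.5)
Your proposal is correct and follows the route the paper intends. The paper gives no separate proof and presents the corollary as (\ref{geochristoffel}) with the index lowered, which is exactly what you obtain by pairing $V_{\dot\psi_t}+\overline{\nabla}_{V_{\psi_t}}V_{\psi_t}=0$ with each $V_{\phi_k}$ and evaluating the connection term through (\ref{prodconnect}). Your density argument for the frame and your bound $|\Gamma_{ij,k}|\le n_in_j^2n_k$ for the double series fill in details the paper leaves implicit.
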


This is the form of (\ref{geochristoffel}) that does not require inverting $\overline{g}(\mu)$. Both the metric coefficients and the connection coefficients appearing in it are given in closed form by Propositions \ref{matrixcoefficients} and \ref{prop:connection-coefficients}. They are evaluated at $\mu_t$, so this system is closed only together with the continuity equation $\partial_t\rho_t=-(\rho_t\psi_t')'$, which determines $\mu_t$ from $\psi_t$. By Corollary \ref{cor:geochristoffel} the dependence on $\mu_t$ can also be removed altogether, and the system then reduces to the Hamilton-Jacobi equation.

\begin{corollary}\label{cor:christoffel-vol}
	At $\mu=\vol$ the matrix $\overline{g}$ is diagonal, with $\overline{g}_{c_nc_n}=\overline{g}_{s_ns_n}=n^2/2$, so that $\Gamma^k_{ij}=2\Gamma_{ij,k}/n_k^2$. Only four families of index types are non-zero, and with $n=n_i$, $m=n_j$, $l=n_k$ they are
	\begin{align*}
		\Gamma^{c_l}_{c_nc_m} &= -\frac{nm^2}{2l}\big(\delta_{|n-l|,m}-\delta_{n+l,m}\big),\\
		\Gamma^{s_l}_{s_nc_m} &= -\frac{nm^2}{2l}\big(\delta_{|n-l|,m}+\delta_{n+l,m}\big),\\
		\Gamma^{s_l}_{c_ns_m} &= \frac{nm^2}{2l}\big(\delta_{n+l,m}+\operatorname{sgn}(n-l)\,\delta_{|n-l|,m}\big),\\
		\Gamma^{c_l}_{s_ns_m} &= \frac{nm^2}{2l}\big(\delta_{n+l,m}+\operatorname{sgn}(l-n)\,\delta_{|n-l|,m}\big).
	\end{align*}
	In particular $\Gamma^k_{ij}=0$ unless $n_k=n_i+n_j$ or $n_k=|n_i-n_j|$, and in that case $|\Gamma^k_{ij}|=n_in_j^2/(2n_k)$.
\end{corollary}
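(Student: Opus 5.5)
At $\mu=\vol$ we have $A_0=1$, $A_p=0$ and $B_p=0$ for all $p\geq1$. The plan is therefore to read everything off from Propositions \ref{matrixcoefficients} and \ref{prop:connection-coefficients} by substitution, and then raise the index.

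\textbf{Step 1: the metric at $\vol$.} In (\ref{metric}) only $A_{|n-m|}$ with $n=m$ survives. This gives $\overline{g}_{c_nc_m}=\overline{g}_{s_ns_m}=\delta_{nm}n^2/2$ and $\overline{g}_{c_ns_m}=0$. So $\overline{g}(\vol)$ is diagonal, with inverse $2\delta_{kl}/n_k^2$.

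\textbf{Step 2: raising the index.} Pair (\ref{Christoffel}) with $V_{\phi_l}$. This gives $\Gamma_{ij,l}=\sum_k\Gamma^k_{ij}\overline{g}_{kl}=\Gamma^l_{ij}\,n_l^2/2$. The interchange of the sum with the inner product is justified because the expansion converges in $L^2(\mu)$ by Theorem \ref{thm:parallelization}, and the metric is continuous. Hence $\Gamma^k_{ij}=2\Gamma_{ij,k}/n_k^2$.

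\textbf{Step 3: the lower-index coefficients.} Substitute $A_p=\delta_{p0}$ into the four displayed formulas of Proposition \ref{prop:connection-coefficients}. Each $A_{|\cdot|}$ becomes a Kronecker delta of its argument. The term $A_{n+l+m}$ vanishes, since $n,l,m\geq1$. For example, $\Gamma_{c_nc_m,c_l}=-\frac{nlm^2}{4}(\delta_{|n-l|,m}-\delta_{n+l,m})$, where I use $\delta_{|n-l-m|,0}+\delta_{|n-l+m|,0}=\delta_{|n-l|,m}$. Multiplying by $2/l^2$ then gives the stated $\Gamma^{c_l}_{c_nc_m}$.

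The other three families are handled the same way. The signs come from which of $n-l-m$ and $n-l+m$ vanishes. The case $n-l=m$ requires $n>l$, and the case $l-n=m$ requires $l>n$. This is exactly where the factors $\operatorname{sgn}(n-l)$ and $\operatorname{sgn}(l-n)$ come from.

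\textbf{Step 4: the mixed families vanish.} The four remaining index-type families ($c_nc_m,s_l$ and so on) involve only the $B_p$ with $p\geq0$. Since $B_0=0$ and $B_p=0$ for $p\geq1$ at $\vol$, they vanish identically.

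\textbf{Step 5: the final claim.} The support statement follows because each delta forces $m=n+l$ or $m=|n-l|$, that is, $n_k\in\{n_i+n_j,|n_i-n_j|\}$. Whenever one delta is active, the coefficient has modulus $nm^2/(2l)$.

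\textbf{Expected obstacle.} The delicate point is careful sign bookkeeping. In particular, I must check that two deltas are never active simultaneously with cancelling or doubling effect. For instance, $m=n+l$ and $m=|n-l|$ cannot hold at once since $n,l\geq1$. Likewise $n-l=m$ and $l-n=m$ cannot both hold. This is what makes the modulus statement valid.
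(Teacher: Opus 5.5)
Your proposal is correct and follows the paper's route. The paper gives no separate proof: it says the index is raised with $\overline{g}(\vol)$ after substituting $A_p=\delta_{p0}$, $B_p=0$ into Propositions \ref{matrixcoefficients} and \ref{prop:connection-coefficients}, which is your Steps 1--4, and your delta and sign bookkeeping checks out, including the mutual exclusivity of $m=n+l$ and $m=|n-l|$ and the equivalence of $m\in\{n+l,|n-l|\}$ with $l\in\{n+m,|n-m|\}$ used in Step 5.
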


\subsection{The Geodesic Equation}

For $S^1$, the geodesic equation of Corollary \ref{corollarygeodesic} is the inviscid Burgers equation for the velocity field.

\begin{prop}\label{prop:geodesic-circle}
	Let $\mu_t=\rho_t\,d\theta$ be a smooth curve in $P^\infty(S^1)$ with $\dot{\mu}_t=V_{\psi_t}$, and put $u(t,\theta):=\partial_\theta\psi_t(\theta)$. Then $\rho$ satisfies the continuity equation
	$$\partial_t\rho+\partial_\theta(\rho\,u)=0.$$
	The curve is a geodesic if and only if $u$ solves the inviscid Burgers equation $\partial_tu+u\,\partial_\theta u=0$. In that case $u(t,\theta)=u_0(\theta-t\,u(t,\theta))$ with $u_0:=u(0,\cdot)$, and if $\psi_0$ is not constant the solution remains smooth exactly up to the time $-1/\min_\theta u_0'(\theta)$.
\end{prop}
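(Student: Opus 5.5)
The plan has three steps: specialize the continuity equation to $S^1$, differentiate the geodesic equation of Corollary \ref{corollarygeodesic} in $\theta$, and solve the resulting Burgers equation by characteristics.

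\emph{Continuity equation.} This is (\ref{derivativelaplace}) on $S^1$. There $\nabla\psi_t=\partial_\theta\psi_t=u$ and $\operatorname{div}=\partial_\theta$, so $\dot\rho_t=-\partial_\theta(\rho_t u)$.

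\emph{Geodesic equation.} By Corollary \ref{corollarygeodesic}, the curve is a geodesic if and only if, after normalizing constants, $\partial_t\psi_t+\tfrac12(\partial_\theta\psi_t)^2=0$. Differentiating in $\theta$ gives $\partial_tu+u\,\partial_\theta u=0$.

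For the converse, assume $u$ solves Burgers. Then $\partial_\theta\big(\partial_t\psi+\tfrac12 u^2\big)=0$, so $\partial_t\psi+\tfrac12u^2=c(t)$ is constant on $S^1$. The constant is absorbed by replacing $\psi_t$ with $\psi_t-\int_0^tc$, which does not change $V_{\psi_t}$. One point needs a check: $u=\partial_\theta\psi$ has zero mean, and Burgers preserves this, since $\partial_t\int u=-\int\partial_\theta(u^2/2)=0$. So $u$ is still a gradient on the circle, and the correspondence between potentials and velocity fields is consistent.

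\emph{Characteristics.} Along $\dot\theta=u$, Burgers gives $\frac{d}{dt}u(t,\theta(t))=0$. So $u$ is constant along the straight lines $\theta=\theta_0+tu_0(\theta_0)$, and this yields the implicit formula $u(t,\theta)=u_0(\theta-tu)$. Smoothness is equivalent to the map $X_t(\theta_0)=\theta_0+tu_0(\theta_0)$, viewed as a map of $S^1$ lifted to $\mathbb{R}$, being a diffeomorphism. Its derivative is $1+tu_0'(\theta_0)$, positive for all $\theta_0$ exactly when $t<-1/\min u_0'$. Here $\min u_0'<0$ holds because $\psi_0$ is nonconstant and periodic, so $u_0=\psi_0'$ is nonzero and periodic, hence $u_0'$ has zero mean and is nonzero.

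For $t$ below this threshold, $X_t$ is a covering-degree-one local diffeomorphism of $S^1$, hence a diffeomorphism. Then $u(t,\cdot)=u_0\circ X_t^{-1}$ is smooth by the inverse function theorem. It solves Burgers, and $\rho_t=(X_t)_*\rho_0$ stays smooth and positive.

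\emph{Expected main obstacle: the breakdown at the critical time.} Blow-up is shown by differentiating the characteristic formula:
\[
\partial_\theta u(t,X_t(\theta_0))=\frac{u_0'(\theta_0)}{1+tu_0'(\theta_0)}.
\]
At a minimum point of $u_0'$ this tends to $-\infty$ as $t\uparrow -1/\min u_0'$. Any smooth solution must agree with the characteristic solution by uniqueness along characteristics. Therefore no smooth solution extends to or past that time.
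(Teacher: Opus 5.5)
Your proof is correct and follows the paper's route. The continuity equation is (\ref{derivativelaplace}) on $S^1$; the geodesic equivalence comes from differentiating the Hamilton--Jacobi equation of Corollary \ref{corollarygeodesic} in $\theta$, and conversely from absorbing the $t$-dependent constant into $\psi_t$; and the time bound comes from the characteristics $\theta_0+t\,u_0(\theta_0)$ with Jacobian $1+t\,u_0'$. Your explicit blow-up $\partial_\theta u=u_0'/(1+t\,u_0')$ at a minimum of $u_0'$ justifies the word ``exactly'' more sharply than the paper's ``first crossing'' remark. Your check that Burgers preserves the zero mean of $u$ is harmless but redundant, since $u=\partial_\theta\psi_t$ by definition.
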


\begin{proof}
	The continuity equation is (\ref{derivativelaplace}) in dimension one, and it holds along every curve with velocity $V_{\psi_t}$, geodesic or not. By Corollary \ref{corollarygeodesic}, $\mu_t$ is a geodesic if and only if $\partial_t\psi_t+\frac12(\partial_\theta\psi_t)^2=0$ after normalizing constants, and differentiating in $\theta$ gives the Burgers equation. Conversely, if $u$ solves the Burgers equation then $\partial_\theta\big(\partial_t\psi_t+\frac12u^2\big)=0$, so that $\partial_t\psi_t+\frac12u^2$ depends only on $t$ and can be absorbed into $\psi_t$. The implicit formula is the method of characteristics, which are the lines $\theta=\theta_0+t\,u_0(\theta_0)$. They remain disjoint exactly as long as $1+t\,u_0'>0$. Since $u_0'=\psi_0''$ has zero mean, it takes negative values unless $\psi_0$ is constant, and the first crossing occurs at $t=-1/\min u_0'$.
\end{proof}

%%%%%%%%%%%%%%%%%%%%%%%%%%%%%%%%%%%%%%%%%%%%%%%%%%%%%%%%%%%%%%%%%%%%%%%%%%%%%%%%%%%%%%%%%%%%%%%%%%%%%%%%%%%%%%%%%%%%%%%%%%%%%%%%%%%%%%%%%%%%%%%%%%%%%%%%%%%%%%%%%%%%%%%%%%%%%%%%%%%%%%%%%%%%%%%%%%%%%%%%%%%%%%%%%%%%%%%%%%%%%%%%%%%%%%%%%%%%%%%%%%%%%%%%%%%%%%%%%%%%%%%%%%%%%%%%%%%%%%%%%%%%%%%%%%%%%%%%%%%%%%%%%%%%%

\subsection{Curvature}

We finish with the curvature of $P^\infty(S^1)$. The orthogonal complement of $\overline{\operatorname{Im}(d)}$ turns out to be one-dimensional here, so that the tensor $T_{\phi\psi}$ of (\ref{defT}) takes values in a fixed line. It then factors as a symplectic form on the Fourier coefficients times a fixed one-form, and the sectional curvature of $P^\infty(S^1)$ can be computed on every plane.

\begin{lemma}\label{lem:complement}
	Let $\mu=\rho\,d\theta\in P^\infty(S^1)$. The orthogonal complement of $\overline{\operatorname{Im}(d)}$ in $\Omega^1_{L^2}(S^1,\mu)$ is the line spanned by $\rho^{-1}d\theta$, and for every $f\in C^\infty(S^1)$
	$$(I-\Pi_\mu)(f\,d\theta)=\frac{\int_0^{2\pi}f\,d\theta}{\int_0^{2\pi}\rho^{-1}\,d\theta}\,\rho^{-1}d\theta.$$
	In particular, $f\,d\theta\in\overline{\operatorname{Im}(d)}$ if and only if $\int_0^{2\pi}f\,d\theta=0$.
\end{lemma}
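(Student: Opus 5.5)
The plan is to identify the complement directly from the defining pairing, and then read off the projection formula by computing one inner product.

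First, I characterize the orthogonal complement. A one-form $g\,d\theta\in\Omega^1_{L^2}(S^1,\mu)$ is orthogonal to $\operatorname{Im}(d)$ exactly when
$$\int_0^{2\pi}\varphi'\,g\,\rho\,d\theta=0\qquad\text{for every }\varphi\in C^\infty(S^1).$$
In the sense of distributions this says $(g\rho)'=0$. Hence $g\rho$ equals a constant $c$ almost everywhere, and $g=c\rho^{-1}$. Conversely, $\rho^{-1}d\theta$ satisfies the condition, because $\int\varphi'\,d\theta=0$ by periodicity. Here $\rho^{-1}$ is smooth and bounded, since $\rho>0$ on the compact circle, so it lies in $L^2(\mu)$. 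This shows the complement of $\overline{\operatorname{Im}(d)}$, which equals the complement of $\operatorname{Im}(d)$, is the line spanned by $\rho^{-1}d\theta$. The only step needing care is the distributional one: a function in $L^1(S^1)$ whose weak derivative vanishes is almost everywhere constant. This is standard, for instance by testing against the derivatives of an approximate identity or by Fourier coefficients.

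Next, since $(I-\Pi_\mu)$ is the orthogonal projection onto this line, I write
$$(I-\Pi_\mu)(f\,d\theta)=\frac{\langle f\,d\theta,\rho^{-1}d\theta\rangle_\mu}{\langle\rho^{-1}d\theta,\rho^{-1}d\theta\rangle_\mu}\,\rho^{-1}d\theta.$$
Using the round metric, for which $|d\theta|=1$, the numerator is $\int f\rho^{-1}\rho\,d\theta=\int f\,d\theta$. The denominator is $\int\rho^{-2}\rho\,d\theta=\int\rho^{-1}d\theta$, which is positive. This gives exactly the stated formula.

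Finally, the last claim follows at once. Since $\overline{\operatorname{Im}(d)}$ is closed, $f\,d\theta$ lies in it if and only if its component in the complement vanishes, that is, if and only if $\int f\,d\theta=0$. As an optional consistency check, when $\int f\,d\theta=0$ one can exhibit $f\,d\theta=dF$ explicitly with $F(\theta)=\int_0^\theta f$, and this $F$ is periodic. I expect no real obstacle; the only point needing any care is the weak-derivative argument in the first step.
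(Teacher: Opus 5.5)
Your proposal is correct and follows essentially the same route as the paper. Orthogonality to every $d\varphi$ forces $(g\rho)'=0$, so the complement is spanned by $\rho^{-1}d\theta$, which lies in $L^2(\mu)$ because $\rho>0$, and the projection formula then comes from $\langle f\,d\theta,\rho^{-1}d\theta\rangle_\mu=\int_0^{2\pi}f\,d\theta$. You are slightly more explicit than the paper about the weak-derivative step and about the normalizing denominator $\norm{\rho^{-1}d\theta}^2_\mu=\int_0^{2\pi}\rho^{-1}d\theta$.
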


\begin{proof}
	A one-form $f\,d\theta$ is orthogonal to $dg$ for every $g\in C^\infty(S^1)$ if and only if $\int_0^{2\pi}fg'\rho\,d\theta=0$ for every such $g$, that is, if and only if $(f\rho)'=0$, that is, $f=c\rho^{-1}$ for some constant $c$. Since $\rho$ is smooth and positive on the compact $S^1$, we have $\norm{\rho^{-1}d\theta}^2_{L^2(\mu)}=\int_0^{2\pi}\rho^{-1}d\theta<\infty$, so this line is a genuine one-dimensional subspace. 	The projection formula follows from $\langle f\,d\theta,\rho^{-1}d\theta\rangle_{L^2(\mu)}=\int_0^{2\pi}f\,d\theta$.
	\end{proof}
	
	On the circle, the motions that do not move $\mu$ can be written down explicitly. By the discussion following (\ref{defT}), a one-form $a\,d\theta$ lies in $\operatorname{Im}(d)^\perp$ exactly when the velocity field $Z=a\,\partial_\theta$, where $\partial_\theta$ denotes the unit vector field dual to $d\theta$, satisfies $(\rho a)'=0$ in the sense of distributions, that is, when the flux of mass $\rho a$ equals a constant $c$. The complement is therefore spanned by a circulation of mass around the circle with constant flux, faster where there is less mass, which leaves $\mu$ unchanged. Its kinetic energy $\int_0^{2\pi}a^2\,d\mu=c^2\int_0^{2\pi}\rho^{-1}d\theta$ is finite because $\rho$ is positive. On the real line no such circulation is available, since a constant flux $c\neq0$ would have kinetic energy $c^2\int_{\mathbb{R}}\rho^{-1}dx=\infty$, as recorded in the remark following Proposition \ref{prop:metric-curvature}. This is the way in which the topology of $S^1$ enters: a flow of mass can close up on itself.

	Returning to Lemma \ref{lem:complement}, the projection is governed by a single number, the mean of $f$ against $d\theta$, and computing $T$ on the Fourier frame reduces to elementary integrals.
	
	\begin{theo}\label{thm:curvature}
	Let $\mu=\rho\,d\theta\in P^\infty(S^1)$ and write $C_\mu:=\int_0^{2\pi}\rho^{-1}d\theta$. On the Fourier frame, the only non-vanishing values of $T$ are
	$$T_{c_ns_n}=-T_{s_nc_n}=\frac{n^3\pi}{C_\mu}\,\rho^{-1}d\theta,\qquad n\geq1,$$
	while $T_{c_nc_m}=T_{s_ns_m}=0$ for all $n,m$ and $T_{c_ns_m}=0$ for $n\neq m$.
\end{theo}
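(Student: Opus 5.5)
The plan is to compute $T_{\phi\psi}$ directly from its definition (\ref{defT}) using Lemma \ref{lem:complement}. In dimension one, $\nabla^2\psi(\nabla\phi)=\psi''\phi'\,\partial_\theta$, so $\flat(\nabla^2\psi(\nabla\phi))=\phi'\psi''\,d\theta$. Lemma \ref{lem:complement} then gives
$$T_{\phi\psi}=\frac{\int_0^{2\pi}\phi'\psi''\,d\theta}{C_\mu}\,\rho^{-1}d\theta .$$
The whole statement therefore reduces to evaluating the single number $\int_0^{2\pi}\phi'\psi''\,d\theta$ on pairs of frame elements. This integral is taken against $d\theta$, not $\mu$, so it does not depend on $\rho$. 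The measure enters only through the factor $C_\mu^{-1}\rho^{-1}$.

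Next I use that frame elements of frequency $m$ satisfy $\psi''=-m^2\psi$, so the integral becomes $-m^2\int_0^{2\pi}\phi'\psi\,d\theta$. The cases are then as follows.
\begin{enumerate}
\item For $\phi=c_n$ and $\psi=c_m$, the integral is $nm^2\int s_nc_m\,d\theta=0$ by orthogonality, for all $n,m$.
\item For $\phi=s_n$ and $\psi=s_m$, it is $-nm^2\int c_ns_m\,d\theta=0$.
\item For $\phi=c_n$ and $\psi=s_m$, it is $nm^2\int s_ns_m\,d\theta=nm^2\pi\,\delta_{nm}$.
\end{enumerate}
The third case gives $T_{c_ns_m}=0$ for $n\neq m$, and $T_{c_ns_n}=\frac{n^3\pi}{C_\mu}\rho^{-1}d\theta$. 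As a sign check, $c_n'=-ns_n$ and $s_n''=-n^2s_n$, so $c_n's_n''=n^3s_n^2$, which integrates to $n^3\pi$.

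The identity $T_{s_nc_n}=-T_{c_ns_n}$ follows from Lemma \ref{lem:T-antisym}. It can also be checked directly, since $\int\phi'\psi''=-\int\phi''\psi'$ by integration by parts. The same lemma covers the remaining mixed case $T_{s_nc_m}$ with $n\neq m$, which vanishes.

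There is no real obstacle. The only points needing care are justifying that $f=\phi'\psi''$ is smooth, so that Lemma \ref{lem:complement} applies, and keeping track of the signs in the derivatives of $c_n$ and $s_n$.
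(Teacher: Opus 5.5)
Your proposal is correct and follows essentially the same route as the paper. Lemma~\ref{lem:complement} reduces $T_{\phi\psi}$ to the scalar $C_\mu^{-1}\int_0^{2\pi}\phi'\psi''\,d\theta$ times $\rho^{-1}d\theta$, the $d\theta$-orthogonality of the Fourier basis evaluates it on frame pairs (nonzero, equal to $n^3\pi$, only for $(c_n,s_n)$), and Lemma~\ref{lem:T-antisym} supplies the reversed pairs.
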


\begin{proof}
	In dimension one $\nabla^2\psi(\nabla\phi)=\psi''\phi'$, so by Lemma \ref{lem:complement} the form $T_{\phi\psi}$ is the multiple of $\rho^{-1}d\theta$ with coefficient $\big(\int_0^{2\pi}\psi''\phi'\,d\theta\big)/C_\mu$. This is an integral against $d\theta$, not against $\mu$, so the ordinary orthogonality relations of the Fourier basis apply. For $\phi=c_n$ and $\psi=s_n$ the integrand is $n^3s_n^2$, of integral $n^3\pi$. For every other pair of frame elements the integrand is of the form $c_pc_q$, $s_ps_q$ or $s_pc_q$ with $p\neq q$, and hence has integral zero. The antisymmetry is Lemma \ref{lem:T-antisym}.
\end{proof}

So $T$ couples the cosine and the sine of one and the same frequency, and nothing else. The same computation applies to arbitrary smooth potentials, and the answer has a shape worth recording: the values all lie on the same line, and the dependence on the two arguments is a symplectic form on the Fourier coefficients.

\begin{theo}\label{thm:curvature-general}
	Let $\mu=\rho\,d\theta\in P^\infty(S^1)$ and set
	$$\alpha_\mu:=\frac{\pi}{C_\mu}\,\rho^{-1}d\theta\in\Omega^1_{L^2}(S^1,\mu),\qquad \omega(u,v):=\sum_{n\geq1}n^3\big(u^c_nv^s_n-u^s_nv^c_n\big),$$
	where $u,v\in C^\infty(S^1)$ have Fourier expansions $u=\sum_n(u^c_nc_n+u^s_ns_n)$ and $v=\sum_n(v^c_nc_n+v^s_ns_n)$, so that the series defining $\omega(u,v)$ converges absolutely by the rapid decay of the Fourier coefficients of a smooth function. Then $T_{uv}=\omega(u,v)\,\alpha_\mu$, and consequently, for every plane in $T_\mu P^\infty(S^1)$ spanned by $V_u$ and $V_v$,
	\begin{equation}\label{eq:curvature-plane}
		\overline{K}\big(V_u,V_v\big)=\frac{3\pi^2\,\omega(u,v)^2}{C_\mu\left(\norm{V_u}_\mu^2\norm{V_v}_\mu^2-\langle V_u,V_v\rangle_\mu^2\right)}.
	\end{equation}
\end{theo}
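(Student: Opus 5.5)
The plan has two steps: first identify $T_{uv}$ directly from Lemma \ref{lem:complement}, then substitute into the sectional curvature corollary of Theorem \ref{riemanniancurvature}, where the intrinsic curvature term vanishes on $S^1$.

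\textbf{Step 1: computing $T_{uv}$.} In dimension one we have $\nabla^2 v(\nabla u)=v''u'$, so by (\ref{defT}) and Lemma \ref{lem:complement},
$$T_{uv}=\frac{\int_0^{2\pi}u'v''\,d\theta}{C_\mu}\,\rho^{-1}d\theta .$$
The integral is taken against $d\theta$, so it can be evaluated with Parseval's identity for the uniform measure. Insert the Fourier expansions
$$u'=\sum_n n\big(-u^c_ns_n+u^s_nc_n\big),\qquad v''=-\sum_n n^2\big(v^c_nc_n+v^s_ns_n\big).$$
These converge in $C^k$ by Theorem \ref{thm:parallelization}(2), so the integral may be computed term by term. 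Only equal frequencies survive, and only the cross terms $s_n^2$ and $c_n^2$ contribute, each with integral $\pi$. This gives
$$\int_0^{2\pi}u'v''\,d\theta=\pi\sum_n n^3\big(u^c_nv^s_n-u^s_nv^c_n\big)=\pi\,\omega(u,v).$$
Hence $T_{uv}=\omega(u,v)\alpha_\mu$. As a consistency check, taking $u=c_n$, $v=s_n$ recovers Theorem \ref{thm:curvature}. Equivalently, one can argue by bilinearity from Theorem \ref{thm:curvature}; the continuity of $T$ needed to pass to infinite sums is exactly the $C^1$ convergence just used.

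\textbf{Step 2: the sectional curvature.} Since $S^1$ is one-dimensional, $R=0$, and the first term of Theorem \ref{riemanniancurvature} vanishes. By Lemma \ref{lem:T-antisym}, $T_{uu}=T_{vv}=0$ and $T_{vu}=-T_{uv}$. Set $\phi_1=\phi_4=u$ and $\phi_2=\phi_3=v$. The three $T$ terms then give
$$-2\langle T_{uv},T_{vu}\rangle+\langle T_{vv},T_{uu}\rangle-\langle T_{uv},T_{vu}\rangle=3|T_{uv}|^2_\mu .$$
Thus $\langle\overline R(V_u,V_v)V_v,V_u\rangle_\mu=3|T_{uv}|^2_\mu$. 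This sign convention is consistent with the sectional curvature corollary; only the non-orthonormal normalization needs tracking. For arbitrary (non-orthonormal) $V_u,V_v$ we divide by the Gram determinant $\norm{V_u}^2\norm{V_v}^2-\langle V_u,V_v\rangle^2$, which is valid because the quadrilinear form is tensorial.

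\textbf{Computing the norm of $\alpha_\mu$.} Finally,
$$|\alpha_\mu|^2_\mu=\frac{\pi^2}{C_\mu^2}\int_0^{2\pi}\rho^{-2}\rho\,d\theta=\frac{\pi^2}{C_\mu},$$
so $3|T_{uv}|^2_\mu=3\pi^2\omega(u,v)^2/C_\mu$, which is (\ref{eq:curvature-plane}).

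The main point requiring care is the sign bookkeeping in Step 2, namely matching the curvature convention of Theorem \ref{riemanniancurvature} with the definition of $\overline K$ for non-orthonormal planes. The Fourier computation in Step 1 is routine.
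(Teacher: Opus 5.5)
Your proposal is correct and follows the paper's proof. As there, $T_{uv}$ comes from Lemma \ref{lem:complement} with $\int_0^{2\pi}u'v''\,d\theta=\pi\,\omega(u,v)$, and the curvature is $3\norm{T_{uv}}^2_\mu$ divided by the Gram determinant, using $\norm{\alpha_\mu}^2_\mu=\pi^2/C_\mu$. The only difference is cosmetic: you evaluate $\langle\overline{R}(V_u,V_v)V_v,V_u\rangle_\mu=3\norm{T_{uv}}^2_\mu$ directly from Theorem \ref{riemanniancurvature} for the non-orthonormal pair, whereas the paper applies the orthonormal-pair corollary and then notes that both sides of (\ref{eq:curvature-plane}) scale by $\det^2$ under a change of basis of the plane.
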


\begin{proof}
	As in the proof of Theorem \ref{thm:curvature}, Lemma \ref{lem:complement} gives
	$$T_{uv}=\frac{1}{C_\mu}\left(\int_0^{2\pi}u'v''\,d\theta\right)\rho^{-1}d\theta .$$
	The Fourier series of $u'=\sum_nn(u^s_nc_n-u^c_ns_n)$ and $v''=-\sum_nn^2(v^c_nc_n+v^s_ns_n)$ converge in $L^2(d\theta)$, so by Parseval's identity
	$$\int_0^{2\pi}u'v''\,d\theta=\pi\sum_{n\geq1}n^3\big(u^c_nv^s_n-u^s_nv^c_n\big)=\pi\,\omega(u,v),$$
	and hence $T_{uv}=\omega(u,v)\,\alpha_\mu$.
	
	For the curvature, recall from the corollary to Theorem \ref{riemanniancurvature} that $\overline{K}=3\norm{T}^2_\mu$ for an orthonormal pair spanning the plane, the first term vanishing here for two independent reasons: $S^1$ is flat, and, being one-dimensional, its gradients are everywhere proportional, so that the Gram factor $|\nabla\phi_1|^2|\nabla\phi_2|^2-\langle\nabla\phi_1,\nabla\phi_2\rangle^2$ vanishes pointwise. Since $\omega$ is alternating and the Gram determinant is quadratic in the same sense, both the numerator and the denominator of (\ref{eq:curvature-plane}) are multiplied by $\det^2$ under a change of basis of the plane, so the right hand side depends only on the plane and it suffices to check it for an orthonormal pair, where it reduces to $3\omega(u,v)^2\norm{\alpha_\mu}^2_\mu$. Finally $\norm{\alpha_\mu}^2_\mu=(\pi/C_\mu)^2\int_0^{2\pi}\rho^{-2}\rho\,d\theta=\pi^2/C_\mu$, which gives (\ref{eq:curvature-plane}).
\end{proof}

Two consequences are immediate. The first identifies the flat directions, and shows that flatness is a linear-algebraic condition on the plane rather than a metric one.

\begin{corollary}\label{cor:isotropic}
	A plane in $T_\mu P^\infty(S^1)$ has vanishing sectional curvature if and only if it is isotropic for the symplectic form $\omega$. In particular, the plane spanned by any two frame fields of distinct frequencies is flat.
\end{corollary}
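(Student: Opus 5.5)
The plan is to read Corollary \ref{cor:isotropic} directly off formula (\ref{eq:curvature-plane}) of Theorem \ref{thm:curvature-general}. Take a plane in $T_\mu P^\infty(S^1)$ and pick a basis $V_u,V_v$ with $u,v\in C^\infty(S^1)$. We may normalise the additive constants so that $u,v$ have zero $d\theta$-mean; this changes neither $V_u,V_v$ nor $\omega(u,v)$, since $\omega$ only sees the coefficients of frequency $n\geq1$.

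First, the denominator in (\ref{eq:curvature-plane}) is strictly positive. Linear independence of $V_u,V_v$ together with Cauchy--Schwarz for Otto's inner product, which is positive definite on $T_\mu P^\infty(S^1)$ because $V_\mu$ is an isometry, shows that the Gram determinant is nonzero. The constant $C_\mu=\int\rho^{-1}d\theta$ is finite and positive because $\rho>0$ is smooth. Hence $\overline K(V_u,V_v)=0$ if and only if $\omega(u,v)=0$.

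Second, we check that this condition depends only on the plane. The proof of Theorem \ref{thm:curvature-general} already notes that $\omega$ is alternating. Under a change of basis of the plane, $\omega(u,v)$ is multiplied by the determinant of the change of basis. So vanishing of $\omega$ on one basis is equivalent to vanishing on every basis, that is, to the plane being isotropic for $\omega$. Here we transport $\omega$ to the plane through the identification $V_\psi\leftrightarrow\psi$ modulo constants, which is well defined by Theorem \ref{thm:parallelization}(2).

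Third, for the statement about frame fields, take two of $c_n,s_n,c_m,s_m$ with $n\neq m$. By definition $\omega$ only pairs the cosine and sine coefficients of the same frequency $n$. Two frame elements of different frequencies therefore have no common frequency, every term of the series vanishes, and $\omega=0$. Equivalently, Theorem \ref{thm:curvature} gives $T_{c_ns_m}=0$ for $n\neq m$, and $T_{c_nc_m}=T_{s_ns_m}=0$. The plane is thus isotropic and hence flat.

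I expect no real obstacle. The only care needed is the well-definedness issue in the second step: checking that $\omega$ descends to potentials modulo constants, and that the positivity of the Gram determinant uses non-degeneracy of the metric on smooth potentials.
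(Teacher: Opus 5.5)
Your proposal is correct and follows essentially the same route as the paper: read the vanishing of $\overline{K}$ off (\ref{eq:curvature-plane}) from the finiteness of $C_\mu$ and the strict positivity of the Gram determinant, then note that $\omega$ only couples cosine and sine coefficients of equal frequency. Your extra check that isotropy depends only on the plane, because $\omega$ is alternating, is already implicit in the proof of Theorem \ref{thm:curvature-general}; stating it explicitly is harmless.
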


\begin{proof}
	By (\ref{eq:curvature-plane}) the curvature vanishes if and only if $\omega(u,v)=0$, since $C_\mu$ is finite and the Gram determinant of a genuine plane is strictly positive. For two frame fields of distinct frequencies, $\omega$ vanishes because it only couples indices of equal frequency.
\end{proof}

The second is the opposite extreme, the planes on which $\omega$ is as large as it can be for a given pair of frequencies.

\begin{corollary}\label{cor:curvature-fourier}
	For every $n\geq1$,
	\begin{equation}\label{eq:curvature-general}
		\overline{K}(V_{c_n},V_{s_n})=\frac{12\pi^2n^2}{C_\mu\left(1-|\widehat{\mu}(2n)|^2\right)},
	\end{equation}
	where $\widehat{\mu}(k):=\int_0^{2\pi}e^{ik\theta}\,d\mu=\pi(a_k+ib_k)$. In particular $P^\infty(S^1)$ is not flat.
\end{corollary}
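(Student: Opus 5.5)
The plan is to specialize formula (\ref{eq:curvature-plane}) of Theorem \ref{thm:curvature-general} to $u=c_n$ and $v=s_n$. Two quantities are needed: the numerator $\omega(c_n,s_n)^2$ and the Gram determinant in the denominator. The numerator is immediate from the definition of $\omega$. For $u=c_n$ we have $u^c_n=1$ and all other coefficients zero, and for $v=s_n$ we have $v^s_n=1$. Hence $\omega(c_n,s_n)=n^3$ and the numerator is $3\pi^2n^6$.

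For the denominator I would use Proposition \ref{matrixcoefficients} with $m=n$. This gives
$$\norm{V_{c_n}}_\mu^2=\frac{n^2}{2}\big(1-A_{2n}\big),\qquad \norm{V_{s_n}}_\mu^2=\frac{n^2}{2}\big(1+A_{2n}\big),$$
using $A_0=1$. For the cross term, $\operatorname{sgn}(0)=0$ (or equivalently $B_0=0$), so $\langle V_{c_n},V_{s_n}\rangle_\mu=-\frac{n^2}{2}B_{2n}$. The Gram determinant is therefore
$$\frac{n^4}{4}\big(1-A_{2n}^2-B_{2n}^2\big).$$
Since $\widehat{\mu}(2n)=\int e^{2in\theta}d\mu=A_{2n}+iB_{2n}$, this equals $\frac{n^4}{4}\big(1-|\widehat{\mu}(2n)|^2\right)$. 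As a consistency check, $\pi(a_{2n}+ib_{2n})=A_{2n}+iB_{2n}$ by the definitions $A_j=\pi a_j$ and $B_j=\pi b_j$.

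Dividing gives
$$\overline{K}(V_{c_n},V_{s_n})=\frac{3\pi^2n^6\cdot 4}{C_\mu\, n^4\big(1-|\widehat{\mu}(2n)|^2\big)}=\frac{12\pi^2n^2}{C_\mu\big(1-|\widehat{\mu}(2n)|^2\big)},$$
which is (\ref{eq:curvature-general}).

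The only point needing care is the denominator. We must have $|\widehat{\mu}(2n)|<1$ so that the Gram determinant is positive. This follows because $V_{c_n}$ and $V_{s_n}$ are linearly independent by the uniqueness part of Theorem \ref{thm:parallelization}. More directly, $|\int e^{2in\theta}\rho\,d\theta|<1$ since $\rho>0$ is not concentrated where $e^{2in\theta}$ is constant.

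Non-flatness then follows because the right-hand side is strictly positive: $C_\mu<\infty$ and $n\geq1$. For instance, at $\mu=\vol$ we have $C_\mu=4\pi^2$ and $\widehat{\mu}(2n)=0$, which gives the value $3n^2$.
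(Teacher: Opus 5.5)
Your proposal is correct and follows the paper's own route. You specialize (\ref{eq:curvature-plane}) to $u=c_n$, $v=s_n$, use $\omega(c_n,s_n)=n^3$, and read off the Gram determinant $\frac{n^4}{4}\big(1-|\widehat{\mu}(2n)|^2\big)$ from Proposition \ref{matrixcoefficients}; your explicit check that $|\widehat{\mu}(2n)|<1$ is a detail the paper leaves implicit.
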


\begin{proof}
	Here $\omega(c_n,s_n)=n^3$, and by Proposition \ref{matrixcoefficients}, since $A_0=1$ and only $A_{2n}$ and $B_{2n}$ occur,
	$$\norm{V_{c_n}}^2_\mu\norm{V_{s_n}}^2_\mu-\langle V_{c_n},V_{s_n}\rangle^2_\mu=\frac{n^4}{4}\left(1-|\widehat{\mu}(2n)|^2\right).$$
	Substituting in (\ref{eq:curvature-plane}) gives (\ref{eq:curvature-general}), which is positive because $\rho$ is smooth and positive on the compact $S^1$, so that $C_\mu<\infty$.
\end{proof}

Formula (\ref{eq:curvature-general}) invites the question of how large the curvature can be as $\mu$ varies. The two factors in its denominator pull in opposite directions, since a density far from uniform makes $C_\mu$ large but may also make $|\widehat{\mu}(2n)|$ large. The following sharpening of the Cauchy-Schwarz inequality settles the competition.

\begin{lemma}\label{lem:sharp-cs}
	Let $\mu=\rho\,d\theta\in P^\infty(S^1)$ and let $k\geq1$. Then
	$$\left(\int_0^{2\pi}\rho^{-1}d\theta\right)\left(1-|\widehat{\mu}(k)|^2\right)\geq4\pi^2,$$
	with equality if and only if $\rho\equiv1/2\pi$.
\end{lemma}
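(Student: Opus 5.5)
The plan is a single weighted Cauchy--Schwarz inequality with a test function adapted to the $k$-th Fourier mode of $\mu$. First I would normalize the phase. Write $\widehat{\mu}(k)=r e^{i\beta}$ with $r=|\widehat{\mu}(k)|\geq0$. Replacing $\rho$ by its rotation $\rho(\cdot+\beta/k)$ changes neither $\int_0^{2\pi}\rho^{-1}d\theta$ nor $|\widehat{\mu}(k)|$, nor whether $\rho$ is constant. So I may assume $\widehat{\mu}(k)=r$ is real. Then $\int_0^{2\pi}\cos(k\theta)\,d\mu=r$ and $\int_0^{2\pi}\sin(k\theta)\,d\mu=0$.

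Next, take $f(\theta)=1-r\cos(k\theta)$. Since $\int_0^{2\pi}\cos(k\theta)\,d\theta=0$, we have $\int_0^{2\pi}f\,d\theta=2\pi$. Writing $f=(f\rho^{1/2})\rho^{-1/2}$, Cauchy--Schwarz gives
$$4\pi^2=\Big(\int_0^{2\pi}f\,d\theta\Big)^2\leq\Big(\int_0^{2\pi}\rho^{-1}d\theta\Big)\Big(\int_0^{2\pi}f^2\rho\,d\theta\Big).$$
It then remains to bound the last factor:
$$\int_0^{2\pi}f^2\,d\mu=1-2r\int_0^{2\pi}\cos(k\theta)\,d\mu+r^2\int_0^{2\pi}\cos^2(k\theta)\,d\mu=1-2r^2+r^2\int_0^{2\pi}\cos^2(k\theta)\,d\mu.$$
Since $\cos^2\leq1$ and $\mu$ is a probability measure, this is at most $1-r^2=1-|\widehat{\mu}(k)|^2$. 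That gives the inequality.

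The only step needing care is the equality case, which I would split according to $r$.
\begin{itemize}
\item If $r>0$, then $\int_0^{2\pi}\cos^2(k\theta)\,d\mu<1$ strictly. Indeed, $\rho>0$ everywhere while $\cos^2(k\theta)<1$ on a set of positive Lebesgue measure. So the inequality is strict.
\item If $r=0$, then $f\equiv1$. Equality in Cauchy--Schwarz forces $\rho^{1/2}$ and $\rho^{-1/2}$ to be proportional, so $\rho$ is constant, hence $\rho\equiv1/2\pi$.
\end{itemize}
Conversely, the uniform density gives $\int_0^{2\pi}\rho^{-1}d\theta=4\pi^2$ and $\widehat{\mu}(k)=0$, so equality holds there.
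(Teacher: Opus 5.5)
Your proof is correct and is essentially the paper's argument: both apply Cauchy--Schwarz to the factorization $(f\rho^{1/2})\cdot\rho^{-1/2}$, with a test function of $d\theta$-integral $2\pi$ built from the $k$-th Fourier mode. The difference is in the test function and the equality case. The paper uses the complex $g=1-\widehat{\mu}(k)e^{-ik\theta}$, for which $\int_0^{2\pi}|g|^2\rho\,d\theta=1-|\widehat{\mu}(k)|^2$ holds exactly, and it settles equality by showing that $g\rho$ constant forces $\widehat{\mu}(k)=0$. Your real test function (the real part of the paper's after your rotation) needs the extra bound $\cos^2\leq1$, and its strictness for $r>0$ gives you a slightly more direct equality case.
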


\begin{proof}
	Write $m:=\widehat{\mu}(k)$ and set $g(\theta):=1-m\,e^{-ik\theta}$. Since $k\geq1$ we have $\int_0^{2\pi}e^{-ik\theta}d\theta=0$, so $\int_0^{2\pi}g\,d\theta=2\pi$. Moreover, using $\int\rho\,d\theta=1$ and $\int e^{-ik\theta}\rho\,d\theta=\overline{m}$,
	$$\int_0^{2\pi}|g|^2\rho\,d\theta=1-m\overline{m}-\overline{m}m+|m|^2=1-|m|^2.$$
	Applying the Cauchy-Schwarz inequality to the factorization $g=(g\sqrt\rho)\cdot\rho^{-1/2}$ yields
	$$4\pi^2=\left|\int_0^{2\pi}g\,d\theta\right|^2\leq\left(\int_0^{2\pi}|g|^2\rho\,d\theta\right)\left(\int_0^{2\pi}\rho^{-1}d\theta\right)=\left(1-|m|^2\right)\int_0^{2\pi}\rho^{-1}d\theta,$$
	which is the stated inequality. Equality holds if and only if $g\rho$ is constant, that is, $\rho=c/(1-m\,e^{-ik\theta})$ for some $c$. Integrating $g\rho=c$ over $[0,2\pi]$ gives $2\pi c=1-|m|^2$, so $c$ is real, and $c\neq0$ because $\rho>0$. Writing $m=|m|e^{i\alpha}$, the right hand side is real for every $\theta$ only if $|m|\sin(\alpha-k\theta)\equiv0$, which forces $m=0$ and hence $\rho$ constant. Conversely $\rho\equiv1/2\pi$ gives $m=0$ and $\int\rho^{-1}d\theta=4\pi^2$.
\end{proof}

The uniform measure therefore maximizes the curvature in every one of these planes.

\begin{corollary}\label{cor:curvature-bound}
	For every $\mu\in P^\infty(S^1)$ and every $n\geq1$,
	$$0<\overline{K}(V_{c_n},V_{s_n})\leq3n^2,$$
	with equality on the right if and only if $\mu=\vol$. Moreover, along any family of measures whose densities degenerate, in the sense that $\int_0^{2\pi}\rho^{-1}d\theta\to\infty$, the sectional curvature of every such plane tends to $0$.
\end{corollary}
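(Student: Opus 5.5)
The plan is to combine formula (\ref{eq:curvature-general}) of Corollary \ref{cor:curvature-fourier} with Lemma \ref{lem:sharp-cs} applied at $k=2n$. By Corollary \ref{cor:curvature-fourier},
$$\overline{K}(V_{c_n},V_{s_n})=\frac{12\pi^2n^2}{C_\mu\left(1-|\widehat{\mu}(2n)|^2\right)}.$$
Positivity follows from two facts. The numerator is positive. The denominator is finite, because $C_\mu<\infty$ for a smooth positive density, and it is positive, because Lemma \ref{lem:sharp-cs} shows it is at least $4\pi^2$. That lemma also gives $1-|\widehat{\mu}(2n)|^2>0$, since $C_\mu$ is finite.

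For the upper bound, Lemma \ref{lem:sharp-cs} with $k=2n\geq1$ gives $C_\mu(1-|\widehat{\mu}(2n)|^2)\geq4\pi^2$. Hence
$$\overline{K}(V_{c_n},V_{s_n})\leq 12\pi^2n^2/(4\pi^2)=3n^2.$$
Equality holds exactly when the denominator equals $4\pi^2$. By the equality case of the lemma this happens if and only if $\rho\equiv1/2\pi$, that is, $\mu=\vol$ under the normalization of this section. As a sanity check at $\vol$, we have $C_\mu=4\pi^2$ and $\widehat{\mu}(2n)=0$, so the curvature is indeed $3n^2$.

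For the degeneration statement, I would use only the trivial bound $|\widehat{\mu}(2n)|\leq\int d\mu=1$, which gives the lower bound $1-|\widehat{\mu}(2n)|^2\geq0$. This bound is too weak, because the factor could tend to zero and compensate for $C_\mu\to\infty$. This is the one delicate point: lower-bounding the product $C_\mu(1-|\widehat{\mu}(2n)|^2)$ by something that tends to infinity with $C_\mu$. My first attempt would reuse the proof of Lemma \ref{lem:sharp-cs} with a better test function. The problem is that $|\widehat{\mu}(2n)|\to1$ forces $\mu$ to concentrate near the points where $e^{2in\theta}$ takes a fixed value, and that seems compatible with $C_\mu\to\infty$. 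So the claim "tends to $0$" as literally stated may need the extra hypothesis that $|\widehat{\mu}(2n)|$ stays bounded away from $1$, or else an argument that the product blows up.

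I would test this on a family concentrating at the $2n$ points $\theta=\pi j/n$. A density $\rho_\varepsilon$ with width $\varepsilon$ bumps and a floor of size $\varepsilon^{p}$ has $C_\mu\sim\varepsilon^{-p}$ and $1-|\widehat{\mu}(2n)|\sim\varepsilon^2+\varepsilon^p$. The product then behaves like $\varepsilon^{2-p}+1$, which does not tend to infinity when $p\geq2$. This suggests the final sentence holds only along families where $\widehat{\mu}(2n)$ stays away from the unit circle. In that case the formula immediately gives $\overline{K}\leq 12\pi^2n^2/(C_\mu\delta)\to0$, and I would prove it in that form.
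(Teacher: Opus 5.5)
Your argument for positivity and for $\overline{K}(V_{c_n},V_{s_n})\le3n^2$, with equality exactly at $\vol$, is the paper's argument: formula (\ref{eq:curvature-general}) together with Lemma \ref{lem:sharp-cs} at $k=2n$.

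On the last sentence you are right, and the paper's proof is wrong there. It argues that $1-|\widehat{\mu}(2n)|^2\le1$ gives $\overline{K}(V_{c_n},V_{s_n})\le12\pi^2n^2/C_\mu$. But that factor sits in the denominator of (\ref{eq:curvature-general}), so the inequality only gives $\overline{K}(V_{c_n},V_{s_n})\ge12\pi^2n^2/C_\mu$. The assertion is false, and a closed-form family shows this without any asymptotics. For $0\le r<1$ let
\[
\rho_r(\theta)=\frac{1}{2\pi}\,\frac{1-r^2}{1-2r\cos(2n\theta)+r^2}=\frac{1}{2\pi}\Big(1+2\sum_{k\geq1}r^k\cos(2nk\theta)\Big).
\]
This is the Poisson kernel in the variable $2n\theta$. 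It is smooth, positive and of mass one, and as $r\to1$ it concentrates at the $2n$ points where $e^{2in\theta}=1$, exactly as you anticipated. Then $\widehat{\mu_r}(2n)=r$, and
\[
C_{\mu_r}=\frac{2\pi}{1-r^2}\int_0^{2\pi}\big(1-2r\cos(2n\theta)+r^2\big)\,d\theta=\frac{4\pi^2(1+r^2)}{1-r^2}\longrightarrow\infty .
\]
At the same time $C_{\mu_r}\big(1-|\widehat{\mu_r}(2n)|^2\big)=4\pi^2(1+r^2)$. Hence $\overline{K}(V_{c_n},V_{s_n})=3n^2/(1+r^2)$, which tends to $3n^2/2$ as $r\to1$.

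Two points in your own discussion need correcting.

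First, the exponent range in your bump heuristic is inverted. The product $\varepsilon^{2-p}+1$ stays bounded for $0<p\le2$ and tends to infinity for $p>2$. Your conclusion therefore survives at $p=2$, and the whole range $0<p\le2$ gives counterexamples. For $p>2$, however, your family has $|\widehat{\mu}(2n)|\to1$ and yet its curvature tends to $0$.

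Second, this shows the conclusion does not hold \emph{only} along families with $\widehat{\mu}(2n)$ bounded away from the unit circle. By (\ref{eq:curvature-general}), the exact criterion is that $\overline{K}(V_{c_n},V_{s_n})\to0$ if and only if $C_\mu\big(1-|\widehat{\mu}(2n)|^2\big)\to\infty$. Being bounded away from the unit circle is a sufficient condition, not a necessary one, and your proof of the corrected statement under that hypothesis is correct.
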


\begin{proof}
	The bound is immediate from (\ref{eq:curvature-general}) and Lemma \ref{lem:sharp-cs}, and positivity from $C_\mu<\infty$. For the last assertion, $1-|\widehat{\mu}(2n)|^2\leq1$ gives $\overline{K}(V_{c_n},V_{s_n})\leq12\pi^2n^2/C_\mu$, which tends to $0$ as $C_\mu\to\infty$.
\end{proof}

The picture that emerges is that $P^\infty(S^1)$ has non-negative sectional curvature, as it must by the corollary to Theorem \ref{riemanniancurvature} since $S^1$ is flat, but that the curvature is unbounded above as the frequency grows, is maximized at the uniform measure in each Fourier plane, and degenerates to zero as $C_\mu$ grows without bound.

Everything above was computed inside the formal Riemannian structure of Otto. Since the conclusion runs against the expectation that a Wasserstein space is flat whenever its base manifold is one-dimensional, we close by verifying it metrically. The fourth order defect of $W_2$ along two geodesics is computed directly from the definition of $W_2$, and it agrees with the value predicted by Otto's metric. A purely metric consequence, that no neighbourhood of $\vol$ is CAT(0), is recorded after the proof. The Remark following the proof explains, in the terms of Lemma \ref{lem:complement}, why the real line behaves differently.

Recall that in a Riemannian manifold the sectional curvature is encoded in the fourth order behaviour of the distance between two geodesics leaving a common point: for tangent vectors $u_1,u_2$ at a point $p$,
$$d^2\big(\exp_p(tu_1),\exp_p(tu_2)\big)=t^2\norm{u_1-u_2}^2-\frac{t^4}{3}\big\langle R(u_1,u_2)u_2,u_1\big\rangle+O(t^5),$$
where $R$ denotes the curvature tensor of the manifold in question. At $\vol$ the curve $t\mapsto(\mathrm{Id}+tv)_*\vol$ is, for $t$ small enough that $\mathrm{Id}+tv$ is increasing, the geodesic with initial velocity $v$, by displacement interpolation. The proposition below therefore evaluates the left hand side of that expansion directly from the definition of $W_2$. Its factor $3/t^4$ is the normalization that cancels the $1/3$ above, while the factor $3$ on the right hand side is the one carried by $\overline{K}=3\norm{T}^2_\mu$, so that the equality of the two sides is the meeting of two independent computations and not a simplification.

\begin{prop}\label{prop:metric-curvature}
	Let $\vol=d\theta/2\pi$ be the uniform measure on $S^1$, let $\phi,\psi\in C^\infty(S^1)$ and put $v:=\nabla\phi$, $w:=\nabla\psi$. Then
	$$\lim_{t\to0}\frac{3}{t^4}\left[t^2\norm{v-w}^2_{L^2(\vol)}-W_2^2\big((\mathrm{Id}+tv)_*\vol,\ (\mathrm{Id}+tw)_*\vol\big)\right]=3\left(\int_{S^1} v\,w'\,d\vol\right)^2,$$
	and the right hand side equals $3\norm{T_{\phi\psi}}^2_{\vol}$. The left hand side is computed from the definition of $W_2$ alone. If the expansion recalled above is applied to Otto's metric, the left hand side is $\big\langle\overline{R}(V_\phi,V_\psi)V_\psi,V_\phi\big\rangle_{\vol}$, and dividing it by the Gram determinant $\norm{V_\phi}^2_{\vol}\norm{V_\psi}^2_{\vol}-\langle V_\phi,V_\psi\rangle^2_{\vol}$ recovers $(\ref{eq:curvature-plane})$ at $\mu=\vol$. For $\phi=c_n$ and $\psi=s_n$ that quotient equals $3n^2$, the value given by Corollary \ref{cor:curvature-bound} at $\mu=\vol$, so that the non-flatness of $P^\infty(S^1)$ and the growth of its curvature with the frequency are already visible from the definition of $W_2$.
\end{prop}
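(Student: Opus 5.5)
The plan is to compute $W_2$ between the two pushed-forward measures exactly, by reducing the optimal transport problem on $S^1$ to a one-parameter minimisation over rotations. I will then expand that minimum to fourth order in $t$, and finally match the result with $3\norm{T_{\phi\psi}}^2_{\vol}$ using Lemma \ref{lem:complement}.

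\emph{Reduction to rotations.} Write $T_t=\mathrm{Id}+tv$ and $S_t=\mathrm{Id}+tw$. Both are orientation preserving diffeomorphisms for $t$ small. Put $\mu_t=(T_t)_*\vol$ and $\nu_t=(S_t)_*\vol$. For every rotation $R_c$ the map $S_t\circ R_c\circ T_t^{-1}$ pushes $\mu_t$ to $\nu_t$, because $(R_c)_*\vol=\vol$. In lifted coordinates this map is monotone. By the classical description of optimal transport on the circle (the optimal plan is supported on a cyclically monotone set, and these lift to monotone maps determined up to a shift of the quantile function), the monotone maps of this form are all the candidates. Hence
$$W_2^2(\mu_t,\nu_t)=\min_{c\in[-\pi,\pi]} f_t(c),\qquad f_t(c):=\int_{S^1}\big|c+t\,(w(\theta+c)-v(\theta))\big|^2\,d\vol(\theta),$$
where for $|c|$ and $t$ small the circle distance coincides with the absolute value of the lifted difference. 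I expect this step to be the main obstacle. It needs a rigorous citation or argument that the optimal coupling on $S^1$ is one of these rotated monotone maps. It also needs a localisation: since $f_t(c)\geq (|c|-2t\norm{v}_\infty-2t\norm{w}_\infty)_+^2$ in the lifted sense, any minimiser satisfies $|c|=O(t)$, so the identification of distance with lifted difference is legitimate.

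\emph{Fourth order expansion.} Expand $f_t$ in $c$, using that $\int v\,d\vol=\int w\,d\vol=0$ (they are derivatives) and that $\int w w'\,d\vol=0$. Then
$$f_t(c)=t^2\norm{v-w}^2-2c\,t^2\!\int v\,w'\,d\vol+c^2(1+O(t))+O(|c|^3+|c|^2t^2+|c|t^3).$$
The linear coefficient is of order $t^2$, so the minimiser satisfies $|c|=O(t^2)$; concretely $c^\ast=t^2\int v w'\,d\vol+O(t^3)$. This upgrades the a priori bound $|c|=O(t)$. Substituting gives $t^2\norm{v-w}^2-W_2^2=t^4(\int vw'\,d\vol)^2+O(t^5)$. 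Multiplying by $3/t^4$ and letting $t\to0$ yields the claimed limit $3(\int v w'\,d\vol)^2$.

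\emph{Identification with the Otto side.} At $\mu=\vol$ we have $\rho=1/2\pi$ in the $d\theta$ normalisation, so $C_{\vol}=4\pi^2$. Lemma \ref{lem:complement} and the proof of Theorem \ref{thm:curvature-general} give
$$T_{\phi\psi}=C_{\vol}^{-1}\Big(\int\psi''\phi'\,d\theta\Big)\rho^{-1}d\theta,\qquad \norm{T_{\phi\psi}}^2_{\vol}=\frac{(\int\phi'\psi''\,d\theta)^2}{C_{\vol}}=\Big(\int v\,w'\,d\vol\Big)^2,$$
because $\int\phi'\psi''\,d\theta=2\pi\int vw'\,d\vol$. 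The corollary to Theorem \ref{riemanniancurvature} says that $\langle\overline R(V_\phi,V_\psi)V_\psi,V_\phi\rangle_{\vol}=3\norm{T_{\phi\psi}}^2_{\vol}$, the base curvature term vanishing on $S^1$. Dividing by the Gram determinant therefore reproduces (\ref{eq:curvature-plane}). For $\phi=c_n$ and $\psi=s_n$ one has $v=-ns_n$ and $w'=-n^2s_n$, so $\int vw'\,d\vol=n^3/2$ and the numerator is $3n^6/4$. The Gram determinant with respect to $\vol$ is $(n^2/2)^2=n^4/4$, so the quotient is $3n^2$, in agreement with Corollary \ref{cor:curvature-bound}.
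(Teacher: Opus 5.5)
Your proposal is correct and follows essentially the same route as the paper. You reduce $W_2^2$ to the minimum over rotations of the lifted monotone cost; the step you flag as the main obstacle is exactly what the paper settles by citing \cite{delon2010fast}. You then localize the minimizer to $|c|=O(t)$, use the zero means of $v$, $w$ and $ww'$ to make the linear coefficient $O(t^2)$, complete the square to get the defect $t^4\big(\int_{S^1}vw'\,d\vol\big)^2$, and match it with $3\norm{T_{\phi\psi}}^2_{\vol}$ via Lemma \ref{lem:complement}.

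Two minor points. First, the cross term $2ct\int_{S^1}\big(w(\theta+c)-v(\theta)\big)\,d\vol$ vanishes identically. Hence $f_t(c)=c^2+t^2\norm{w(\cdot+c)-v}^2_{L^2(\vol)}$ exactly, and your error terms tighten to $O(c^2t^2)$. Second, for a non-orthonormal pair, the identity $\langle\overline{R}(V_\phi,V_\psi)V_\psi,V_\phi\rangle_{\vol}=3\norm{T_{\phi\psi}}^2_{\vol}$ follows from Theorem \ref{riemanniancurvature} together with Lemma \ref{lem:T-antisym}, not from its corollary, which is stated only for orthonormal pairs.
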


\begin{proof}
	The idea is elementary. Both measures are images of $\vol$ under maps close to the identity. On the real line there would be nothing to decide, since the monotone map is optimal and its cost is exactly $t^2\norm{v-w}^2_{L^2(\vol)}$. On the circle that map may be composed with a rigid rotation, so there is a whole family of admissible costs, one for each rotation $\beta$, and $W_2^2$ is the smallest of them. Write $C(\beta)$ for the cost of the rotation by $\beta$, so that $C(0)$ is the value just quoted.

	Everything then depends on whether $\beta=0$ is optimal. It is not, as soon as $C'(0)\neq0$, and the gain is easy to estimate. Near the origin $C$ is a parabola,
	$$C(\beta)\approx C(0)+C'(0)\,\beta+\tfrac12C''(0)\,\beta^2,$$
	whose minimum value is $C(0)-C'(0)^2/2C''(0)$. Completing this square is the whole proof, and the two derivatives are computed in Step 2.

	The order in $t$ is decided there as well, and may be announced now. We will find $C''(0)=2+O(t^2)$, so the denominator is harmless. The term of order $t$ in $C'(0)$, on the other hand, is a multiple of $\int_{S^1}(v-w)\,d\vol$, which vanishes because $v$ and $w$ are derivatives of periodic functions. Hence $C'(0)$ is of order $t^2$, its square is of order $t^4$, and this is the fourth order defect of the statement.

	\emph{Step 1: the cost of a rotation.} Cut the circle at $\theta=0$ and let
	$$F_\mu(\theta):=\mu\big([0,\theta]\big)=\int_0^\theta\rho(\sigma)\,d\sigma\qquad(0\leq\theta<2\pi),$$
	extended to $\mathbb{R}$ by $F_\mu(\theta+2\pi)=F_\mu(\theta)+1$. It is smooth and strictly increasing, and we read its inverse in arclength by setting
	$$\widetilde{Q}_\mu(\theta):=F_\mu^{-1}\big(\theta/2\pi\big),$$
	the quantile function of $\mu$ in this parametrization, which satisfies $\widetilde{Q}_\mu(\theta+2\pi)=\widetilde{Q}_\mu(\theta)+2\pi$ and $\widetilde{Q}_{\vol}=\mathrm{Id}$. Equivalently, $\widetilde{Q}_\mu$ is the increasing map commuting with $\theta\mapsto\theta+2\pi$ that pushes $\vol$ forward to $\mu$, and it is unique up to a translation of its argument, which is the choice of cut point. The only property we use is that it turns a pushforward into the map itself: for $S$ increasing with $S(\theta+2\pi)=S(\theta)+2\pi$,
	$$\widetilde{Q}_{S_*\vol}=S .$$
	In particular $\widetilde{Q}_\mu=\mathrm{Id}+tv$ and $\widetilde{Q}_\nu=\mathrm{Id}+tw$, for $t$ small enough that both maps are increasing.

	By \cite{delon2010fast} the optimal coupling on the circle is monotone up to a rotation, and in this parametrization
	$$W_2^2(\mu,\nu)=\min_{\beta\in\mathbb{R}}C(\beta),\qquad C(\beta):=\int_{S^1}\big|\widetilde{Q}_\mu(\theta)-\widetilde{Q}_\nu(\theta+\beta)\big|^2d\vol(\theta),$$
	where $\beta$ is the rotation measured in arclength, the difference is computed on the lift, and $C$ is convex. The ambiguity in the cut point translates the argument of each quantile function, which the minimization over $\beta$ absorbs. Substituting the two quantile functions, the integrand becomes explicit and all trace of the construction disappears:
	$$C(\beta)=\int_{S^1}\big|t\,v(\theta)-\beta-t\,w(\theta+\beta)\big|^2d\vol(\theta).$$
	Write $L:=\max\left(\norm{v}_\infty,\norm{w}_\infty\right)$. Step 4 below checks that every minimizer of $C$ satisfies $|\beta|\leq5tL$, and that on that range the distance computed on the lift is the geodesic distance of $S^1$ once $t<\pi/7L$. All estimates that follow are taken on $|\beta|\leq5tL$.

	\emph{Step 2: the first two derivatives at $\beta=0$.} Differentiating under the integral sign twice,
	$$C'(\beta)=-2\int_{S^1}\big(t\,v(\theta)-\beta-t\,w(\theta+\beta)\big)\big(1+t\,w'(\theta+\beta)\big)\,d\vol(\theta),$$
	$$C''(\beta)=2\int_{S^1}\big(1+t\,w'(\theta+\beta)\big)^2d\vol(\theta)-2t\int_{S^1}\big(t\,v(\theta)-\beta-t\,w(\theta+\beta)\big)\,w''(\theta+\beta)\,d\vol(\theta).$$
	Since $v=\phi'$ and $w=\psi'$ with $\phi$ and $\psi$ periodic, the integral over $S^1$ of a derivative vanishes, which kills four terms at once:
	$$\int_{S^1}v\,d\vol=\int_{S^1}w\,d\vol=\int_{S^1}w'\,d\vol=0,\qquad \int_{S^1}w\,w'\,d\vol=\tfrac12\int_{S^1}\big(w^2\big)'\,d\vol=0 .$$
	Evaluating at $\beta=0$ and using these,
	$$C(0)=t^2\norm{v-w}^2_{L^2(\vol)},\qquad C'(0)=-2t^2\int_{S^1}v\,w'\,d\vol,\qquad C''(0)=2+O(t^2).$$
	The middle identity is the one announced above. Before the cancellations it reads
	$$C'(0)=-2t\int_{S^1}(v-w)\,d\vol-2t^2\int_{S^1}(v-w)\,w'\,d\vol,$$
	and it is the vanishing of the first integral that removes the order $t$ and leaves an order $t^2$. In particular $\beta=0$ is not optimal whenever $\int_{S^1}v\,w'\,d\vol\neq0$, and it is exactly this failure that the circle affords and the line does not.

	\emph{Step 3: the gain.} On $|\beta|\leq5tL$ the integrand and its $\beta$-derivatives are bounded. The function $t\,v(\theta)-\beta-t\,w(\theta+\beta)$ is itself $O(t)$ there, and each of its derivatives in $\beta$ beyond the first carries a factor $t$, so that $C'''=O(t)$ there. Since $C''(0)=2+O(t^2)$, the function $C'$ is strictly increasing on that range for $t$ small, hence has a single zero $\beta^*$, and
	$$\beta^*=-\frac{C'(0)}{C''(0)}+O(t^5)=t^2\int_{S^1}v\,w'\,d\vol+O(t^4).$$
	Taylor's formula at $\beta=0$ with the bound on $C'''$ now gives
	$$W_2^2(\mu,\nu)=C(\beta^*)=C(0)-\frac{C'(0)^2}{2C''(0)}+O(t^5)=t^2\norm{v-w}^2_{L^2(\vol)}-t^4\left(\int_{S^1}v\,w'\,d\vol\right)^2+O(t^5),$$
	which is the stated limit.

	\emph{Step 4: the two claims of Step 1.} Expanding the square and using $\big|t\,v(\theta)-t\,w(\theta+\beta)\big|\leq2tL$ gives $C(\beta)\geq\beta^2-4tL|\beta|$, while $C(0)\leq4t^2L^2$. Hence $C(\beta)>C(0)$ as soon as $\beta^2-4tL|\beta|>4t^2L^2$, which holds for $|\beta|\geq5tL$, so every minimizer satisfies $|\beta^*|\leq5tL$. For such a $\beta$ each particle is displaced by $\big|t\,v(\theta)-\beta-t\,w(\theta+\beta)\big|\leq7tL$, which is less than half a turn once $t<\pi/7L$, and in that regime the distance computed on the lift is the geodesic distance of $S^1$.

	\emph{Step 5: the right hand side.} Here $\int_{S^1}v\,w'\,d\vol=\frac{1}{2\pi}\int_0^{2\pi}\phi'\psi''\,d\theta$, while Lemma \ref{lem:complement} with $\rho\equiv1/2\pi$ gives $C_{\vol}=4\pi^2$ and
	$$T_{\phi\psi}=\frac{\int_0^{2\pi}\psi''\phi'\,d\theta}{4\pi^2}\,\rho^{-1}d\theta=\frac{1}{2\pi}\left(\int_0^{2\pi}\psi''\phi'\,d\theta\right)d\theta,$$
	whose squared $L^2(\vol)$ norm is $\big(\frac{1}{2\pi}\int_0^{2\pi}\psi''\phi'\,d\theta\big)^2$. This is $\left(\int_{S^1}v\,w'\,d\vol\right)^2$, so the limit equals $3\norm{T_{\phi\psi}}^2_{\vol}$.
\end{proof}

\begin{remark}
	Proposition \ref{prop:metric-curvature} has a purely metric consequence: no neighbourhood of $\vol$ in $P_2(S^1)$ is a CAT(0) space. For $t$ small the curves $\gamma_\phi(t):=(\mathrm{Id}+t\nabla\phi)_*\vol$ and $\gamma_\psi(t):=(\mathrm{Id}+t\nabla\psi)_*\vol$ are geodesics issuing from $\vol$, with
	$$W_2(\vol,\gamma_\phi(t))=t\norm{\nabla\phi}_{L^2(\vol)},\qquad W_2(\vol,\gamma_\psi(t))=t\norm{\nabla\psi}_{L^2(\vol)},$$
	and by the proposition $W_2(\gamma_\phi(t),\gamma_\psi(t))/t$ tends to $\norm{\nabla\phi-\nabla\psi}_{L^2(\vol)}$. Hence the comparison angle at $\vol$ of the triangle with vertices $\vol$, $\gamma_\phi(t)$, $\gamma_\psi(t)$ converges, as $t\to0$, to the angle between $\nabla\phi$ and $\nabla\psi$ in $L^2(\vol)$, and this limit is the Alexandrov angle between the two geodesics at $\vol$. In a CAT(0) space the Alexandrov angle between two sides of a geodesic triangle is at most the corresponding angle of its comparison triangle \cite[Proposition II.1.7(4)]{BH99}, so the comparison angle would never be smaller than its limit, that is, $W_2(\gamma_\phi(t),\gamma_\psi(t))\geq t\norm{\nabla\phi-\nabla\psi}_{L^2(\vol)}$ for all small $t$. The proposition gives the strict opposite inequality whenever $\int_{S^1}\phi'\psi''\,d\vol\neq0$, for instance for $\phi=c_n$ and $\psi=s_n$.
\end{remark}

\begin{remark}
	The computation above isolates what distinguishes the circle from the line, and the same dichotomy is visible on both sides of the paper.
	
	On the Riemannian side, the argument of Lemma \ref{lem:complement} applied to $\mathbb{R}$ would again produce $\rho^{-1}dx$ as the only candidate for a one-form orthogonal to all exact ones. But for every probability density on $\mathbb{R}$ one has $\norm{\rho^{-1}dx}^2_{L^2(\mu)}=\int_{\mathbb{R}}\rho^{-1}dx=\infty$, so that candidate does not belong to $\Omega^1_{L^2}(\mathbb{R},\mu)$, the orthogonal complement of $\overline{\operatorname{Im}(d)}$ is trivial, and $T$ vanishes identically. Equivalently, on $\mathbb{R}$ every function is a derivative, whereas on $S^1$ only those of vanishing mean are. Thus $I-\Pi_\mu$ is of rank zero in the first case and of rank one in the second, and this is what $H^1(S^1)\neq0$ contributes.
	
	On the metric side, the same dichotomy appears as the rotation parameter $\beta$ of Proposition \ref{prop:metric-curvature}. On the line the monotone coupling between two absolutely continuous measures is unique, there is nothing to optimize, and the defect vanishes. On the circle the monotone coupling is determined only up to a rotation, and optimizing it produces the fourth order gain.
\end{remark}

\section{The Push-Forward Derivative of an Expanding Circle Map}\label{sec:kloeckner}

The frame of Theorem \ref{thm:parallelization} was built to make the Riemannian structure of $P^\infty(M)$ computable. This short section puts it to work on a computation carried out elsewhere, the derivative of the push-forward action of an expanding circle map at its invariant measure, obtained by Kloeckner in \cite{Klo13} inside the differential structure of Gigli. We do not improve on that theorem and we use it as stated. What the frame contributes is of a different kind, and is of two sorts. It replaces the assertion that every eigenvalue has infinite multiplicity by an explicit parametrization of each eigenspace, so that an eigenvector can be written down. And it is defined simultaneously at every measure with smooth positive density, whereas the computation below takes place at the invariant measure alone. Both points are made precise at the end of the section.

We keep the notation of Section \ref{secExample}, so that $c_m(\theta)=\cos(m\theta)$ and $s_m(\theta)=\sin(m\theta)$, and we write $e_m(\theta)=e^{im\theta}$. Fix an integer $d\geq2$ and let $\Phi_d(\theta)=d\theta\bmod2\pi$ be the standard degree $d$ self-covering map of the circle, acting on measures by push-forward. Each point has $d$ preimages, at each of which $\Phi_d$ expands by $d$, so that $\Phi_{d\#}\vol=\vol$ and $\vol$ is the unique absolutely continuous invariant measure of $\Phi_d$.

The two frameworks describe the same tangent space. In \cite{Klo13} the tangent space at $\mu$ is the closure in $L^2(\mu)$ of the gradients of smooth functions, which is the space $L^2_0(\mu)$ of Subsection \ref{sec1}, and the two inner products coincide. The only difference is one of labelling: there a tangent vector is recorded by its displacement field $v$, here by a potential $\psi$ with $\nabla\psi=v$. On $S^1$ the potential of a field of frequency $m$ has the same frequency and is scaled by $1/m$, and this is the only conversion factor that appears below.

Recall that the Perron-Frobenius, or transfer, operator of $\Phi_d$ is
$$\left(\mathcal{L}_df\right)(x)=\frac1d\sum_{\Phi_d(y)=x}f(y).$$
Its action on exponentials is a shift. Indeed the preimages of $\theta$ are the points $(\theta+2\pi j)/d$ with $0\leq j<d$, so that
$$\left(\mathcal{L}_de_m\right)(\theta)=\frac1d\,e^{im\theta/d}\sum_{j=0}^{d-1}e^{2\pi imj/d}=\begin{cases}e_{m/d}(\theta),&\text{if }d\mid m,\\ 0,&\text{otherwise,}\end{cases}$$
the inner sum being $d$ when $d$ divides $m$ and $0$ otherwise. Equivalently, on the measure side $\widehat{\Phi_{d\#}\mu}(n)=\widehat{\mu}(nd)$, in the notation of Corollary \ref{cor:curvature-fourier}.

\begin{theo}[{\cite[Theorem 1.2]{Klo13}}]\label{thm:kloeckner}
	The map $\Phi_{d\#}$ has a G\^ateaux derivative at $\vol$, equal to $d$ times the Perron-Frobenius operator of $\Phi_d$ acting on $L^2_0(\vol)$. In particular its spectrum is the disc of radius $d$, and every number of modulus smaller than $d$ is an eigenvalue of infinite multiplicity.
\end{theo}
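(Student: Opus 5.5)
This is Kloeckner's theorem, cited and not reproved in the paper, but I would argue it directly. The plan is to compute the derivative on smooth fields and then extend by continuity. Take $v=\nabla\psi$ with $\psi\in C^\infty(S^1)$ and consider the geodesic $\mu_t=(\mathrm{Id}+tv)_*\vol$, which is a geodesic for small $t$ by displacement interpolation. Its image $\Phi_{d\#}\mu_t=(\Phi_d\circ(\mathrm{Id}+tv))_*\vol$ has to be written as $(S_t)_*\vol$ with $S_t$ increasing and commuting with $\theta\mapsto\theta+2\pi$. The natural way to do this is to average the displacements over the $d$ preimages.

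The first key step is to lift $\Phi_d\circ(\mathrm{Id}+tv)$. For $\theta\in[0,2\pi)$, its $d$ preimage branches are $y_j=(\theta+2\pi j)/d$, and the images are $\theta+2\pi j+tdv(y_j)$ modulo $2\pi$. The measure $\Phi_{d\#}\mu_t$ is then the average of the push-forwards of $\vol$ under the $d$ maps $\theta\mapsto\theta+tdv((\theta+2\pi j)/d)$.

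The second step reads off the first-order displacement through the distribution function. To first order in $t$, $F_{\Phi_{d\#}\mu_t}(\theta)=\theta/2\pi-\frac{t}{2\pi}\cdot\frac1d\sum_j dv(y_j)+O(t^2)$. Inverting this gives the quantile map $\widetilde Q=\mathrm{Id}+t\,d\,\mathcal L_dv+O(t^2)$, up to a rotation; the rotation is irrelevant because $\mathcal L_dv$ has zero mean. With the quantile description and the rotation bound of Step 4 of Proposition \ref{prop:metric-curvature}, this gives $W_2\big(\Phi_{d\#}\mu_t,(\mathrm{Id}+td\mathcal L_dv)_*\vol\big)=o(t)$. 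Hence the Gâteaux derivative is $v\mapsto d\,\mathcal L_dv$ on smooth fields.

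Since $\mathcal L_d$ is a contraction on $L^2(\vol)$, the operator extends continuously to $L^2_0(\vol)$. Extending the directional derivative itself to non-smooth $v$ needs $\Phi_{d\#}$ to be Lipschitz for $W_2$ (with constant $d$), so that the errors can be approximated uniformly. I expect this step to be the main obstacle, together with making the $O(t^2)$ remainder uniform.

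The spectral statement then follows in the Fourier frame. By the shift formula, $\mathcal L_de_m=e_{m/d}$ if $d\mid m$ and $0$ otherwise. Consequently $d\mathcal L_d$ is $d$ times a direct sum of backward shifts, one along each chain $\{m_0d^k\}_{k\ge0}$ with $d\nmid m_0$. On a chain, for $|\lambda|<d$, the vector
$$\sum_{k\ge0}(\lambda/d)^k e_{m_0d^k}$$
lies in $L^2$ and is an eigenvector with eigenvalue $\lambda$; it is also smooth up to scaling in the gradient labelling, since the factor $1/m$ only improves decay. There are infinitely many chains, which gives infinite multiplicity. The operator norm of $d\mathcal L_d$ is $d$, so the spectrum is a closed set containing the open disc and contained in the closed disc of radius $d$, hence equal to the closed disc.
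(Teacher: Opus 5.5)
The paper gives no proof of this statement. It quotes it from \cite{Klo13}, where it is obtained in Gigli's differential structure, and only afterwards rewrites the derivative in the Fourier frame. Your argument is a genuinely different, self-contained route, and it is essentially correct.

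For $\Phi_d$ at the rotation-invariant measure $\vol$, the monotone rearrangement on $S^1$ reduces the first-order analysis to distribution functions. Your formula for $F_{\Phi_{d\#}\mu_t}$ checks out: it is $\sum_j\mu_t([2\pi j/d,(2\pi j+\theta)/d])$, and each term equals $\frac1{2\pi}\big(b-a-t(v(b)-v(a))\big)+O(t^2)$. Your spectral part is the same chain decomposition the paper uses after Proposition \ref{prop:kloeckner-frame}. You complete it with the norm bound that identifies the spectrum with the closed disc, which the paper does not spell out. What your route buys is an elementary proof with an explicit $O(t^2)$ error in smooth directions. What it gives up is generality, since it relies on the explicit branches of $\Phi_d$ and on the rotation invariance of $\vol$.

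Three points need fixing.

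\textbf{The step you flag as the main obstacle is routine.} It needs no uniformity of the remainder.
Since $\Phi_d$ is $d$-Lipschitz on $S^1$, pushing couplings forward shows that $\Phi_{d\#}$ is $d$-Lipschitz for $W_2$.
The coupling $(\mathrm{Id}+tv,\mathrm{Id}+tv_\epsilon)_*\vol$ gives $W_2\big((\mathrm{Id}+tv)_*\vol,(\mathrm{Id}+tv_\epsilon)_*\vol\big)\leq t\norm{v-v_\epsilon}_{L^2(\vol)}$.
The same bound holds for $u=d\mathcal{L}_dv$ and $u_\epsilon=d\mathcal{L}_dv_\epsilon$, with $\norm{u-u_\epsilon}_{L^2(\vol)}\leq d\norm{v-v_\epsilon}_{L^2(\vol)}$.
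The triangle inequality then gives $\limsup_{t\to0}t^{-1}W_2\big(\Phi_{d\#}(\mathrm{Id}+tv)_*\vol,(\mathrm{Id}+tu)_*\vol\big)\leq2d\norm{v-v_\epsilon}_{L^2(\vol)}$ for every smooth mean-zero $v_\epsilon$, so the limit is $0$.

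\textbf{The rotation is harmless because $\vol$ is rotation invariant.} It is not because $\mathcal{L}_dv$ has zero mean.
For a constant $c=O(t)$, the map $\mathrm{Id}+tu+c$ differs from $(\mathrm{Id}+tu)(\cdot+c)$ by $O(t^2)$ in sup norm, and the latter pushes $\vol$ to $(\mathrm{Id}+tu)_*\vol$.
You only need an upper bound on $W_2$, so this coupling suffices and Step 4 of Proposition \ref{prop:metric-curvature} is not needed.
The zero mean of $u$ is what places the derivative in $L^2_0(\vol)$.

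\textbf{Drop the claim that the eigenvectors are smooth in the gradient labelling.}
At frequency $m=m_0d^k$ the coefficient is $(\lambda/d)^k$, which decays only polynomially in $m$. Dividing by $m$ gains a single power, so for $\lambda\neq0$ the potential is not smooth, as the paper points out at the end of Section \ref{sec:kloeckner}.
This does not affect the theorem, which is a statement about $L^2_0(\vol)$.
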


Theorem \ref{thm:kloeckner} describes the derivative as an operator on displacement fields, while the frame of Theorem \ref{thm:parallelization} labels tangent vectors by potentials. By the dictionary recorded above the two tangent spaces are the same, so the theorem applies to the frame vectors directly and only the labels have to be converted. That conversion is not innocuous. Passing from a field of frequency $m$ to its potential divides by $m$, while the derivative sends frequency $m$ to frequency $m/d$, so the two rescalings do not cancel and leave a factor $d$ behind. This is the origin of the $d^2$ below, which is not to be read against the spectral radius $d$ of Theorem \ref{thm:kloeckner}. The two statements are reconciled immediately after the proof.

\begin{prop}\label{prop:kloeckner-frame}
	In the frame $\{V_{c_m},V_{s_m}\}_{m\geq1}$ of Theorem \ref{thm:parallelization},
	$$D_{\vol}\Phi_{d\#}\,V_{c_m}=\begin{cases}d^2\,V_{c_{m/d}},&\text{if }d\mid m,\\ 0,&\text{otherwise,}\end{cases}$$
	and the same formula holds on the $V_{s_m}$.
\end{prop}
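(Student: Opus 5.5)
We take Theorem \ref{thm:kloeckner} as given. It says that $D_{\vol}\Phi_{d\#}$ acts on displacement fields $v\in L^2_0(\vol)$ by $v\mapsto d\,\mathcal{L}_dv$. The task is then to translate this into the potential labelling of the frame, using the dictionary recorded before the proposition. Concretely, the frame vector $V_{c_m}$ corresponds to the displacement field $\nabla c_m=c_m'=-m\,s_m$, and $V_{s_m}$ corresponds to $s_m'=m\,c_m$.

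The steps are as follows.

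\emph{Step 1: the transfer operator on real modes.} We compute $\mathcal{L}_d$ on $c_m$ and $s_m$. Since $\mathcal{L}_d$ is real and linear, taking real and imaginary parts of the shift formula $\mathcal{L}_de_m=e_{m/d}$ (or $0$) gives
$$\mathcal{L}_dc_m=c_{m/d},\qquad \mathcal{L}_ds_m=s_{m/d}\qquad\text{if }d\mid m,$$
and both vanish otherwise.

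\emph{Step 2: apply the derivative.} For $d\mid m$, writing $k=m/d$, we get
$$D_{\vol}\Phi_{d\#}(-m\,s_m)=-dm\,s_{k}.$$

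\emph{Step 3: convert back to a potential.} We need a $\chi$ with $\chi'=-dm\,s_k$. The choice $\chi=\frac{dm}{k}c_k=d^2c_k$ works, because $c_k'=-k\,s_k$. Hence $D_{\vol}\Phi_{d\#}V_{c_m}=d^2V_{c_{m/d}}$. When $d\nmid m$ the image field is $0$, so the image vector is $V_0=0$. The sine case is identical: $s_m'=m\,c_m$ is sent to $dm\,c_k$, whose potential is $d^2s_k$.

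The only delicate point is the identification of the tangent spaces, together with the requirement that the derivative in Theorem \ref{thm:kloeckner} be applied to the displacement field $\nabla\psi$ and not to the potential $\psi$. The paper has already stated that the two frameworks share $L^2_0(\vol)$ with the same inner product, and that $V_\mu$ is an isometry. So the derivative transported through $V_{\vol}$ is exactly $V_\psi\mapsto V_\chi$ with $\nabla\chi=d\,\mathcal{L}_d\nabla\psi$. Once this is in place, the factor $d^2$ comes out as $d\cdot(m/k)$: one factor $d$ from Kloeckner's derivative, and one from the ratio of frequencies when integrating back to a potential.
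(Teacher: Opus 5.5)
Your proof is correct and follows the paper's argument: you pass from $V_{c_m}$ to its displacement field $-m\,s_m$, apply $d\,\mathcal{L}_d$ from Theorem \ref{thm:kloeckner}, and integrate $-dm\,s_{m/d}$ back to the potential $d^2c_{m/d}$, with the sine case handled identically. The one step you spell out that the paper leaves implicit is the derivation of $\mathcal{L}_dc_m$ and $\mathcal{L}_ds_m$ from the exponential shift, by taking real and imaginary parts.
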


\begin{proof}
	The displacement field of $V_{c_m}$ is $c_m'=-m\,s_m$. By Theorem \ref{thm:kloeckner} and the action of $\mathcal{L}_d$ recorded above, its image is $0$ unless $d$ divides $m$, in which case it is $d\cdot\left(-m\,s_{m/d}\right)$. It remains to read that field as a potential. Since $c_{m/d}'=-(m/d)\,s_{m/d}$, we have $-m\,d\,s_{m/d}=d^2\,c_{m/d}'$, so the resulting tangent vector is $d^2V_{c_{m/d}}$. The computation on the $V_{s_m}$ is identical.
\end{proof}

The weight $d^2$ is an artifact of the frame, which is not orthonormal for the metric of Otto: at $\vol$ one has $\norm{V_{c_m}}^2_{\vol}=\int_0^{2\pi}|c_m'|^2d\vol=m^2/2$, so the frame vectors grow linearly in the frequency. Normalizing them, $\widehat{V}_{c_m}:=V_{c_m}/\norm{V_{c_m}}_{\vol}$, the weight becomes
$$d^2\,\frac{\norm{V_{c_{m/d}}}_{\vol}}{\norm{V_{c_m}}_{\vol}}=d^2\cdot\frac1d=d,$$
so that in the orthonormal frame $D_{\vol}\Phi_{d\#}$ is the weighted shift of constant weight $d$ supported on the sublattice of multiples of $d$. Its spectral radius is $d$, in agreement with Theorem \ref{thm:kloeckner}.

In this form the eigenvalues are read off by inspection. Writing $u=\sum_{m\geq1}\big(u_m\widehat{V}_{c_m}+u'_m\widehat{V}_{s_m}\big)$, with $\widehat{V}_{s_m}$ normalized in the same way, the equation $D_{\vol}\Phi_{d\#}u=\zeta u$ amounts to $d\,u_{kd}=\zeta u_k$ and $d\,u'_{kd}=\zeta u'_k$ for every $k\geq1$, that is, to
$$u_{kd^j}=\left(\frac{\zeta}{d}\right)^ju_k,\qquad u'_{kd^j}=\left(\frac{\zeta}{d}\right)^ju'_k\qquad(j\geq0).$$
The coefficients indexed by the frequencies not divisible by $d$, of the cosines and of the sines, may therefore be prescribed freely, and the remaining ones are obtained by propagation along the multiples. For $|\zeta|<d$ the propagation is geometrically decreasing and the resulting vector is square summable, so every such $\zeta$ is an eigenvalue whose eigenspace is parametrized by an infinite family of free coefficients. This is the infinite multiplicity of Theorem \ref{thm:kloeckner}, and the frame has turned it into a statement about a shift.

It is worth being precise about what the frame has and has not done here. The description of $\mathcal{L}_d$ as a shift on exponentials requires no frame at all, and Theorem \ref{thm:kloeckner} has been used rather than reproved, so nothing above improves on \cite{Klo13}. Two things are gained nonetheless. The first is that the eigenspaces are exhibited and not merely counted. The display above is a bijection between the eigenspace of $\zeta$ and the pairs of square summable sequences indexed by the frequencies not divisible by $d$, so that an eigenvector can be produced on demand, whereas infinite multiplicity on its own asserts only that many of them exist. The second is that the Fourier basis of $L^2_0(\vol)$ used above is tied to the invariant measure, being orthonormal there and nowhere else, while the frame of Theorem \ref{thm:parallelization} is defined simultaneously at every $\mu\in P^\infty(S^1)$, with the metric coefficients of Proposition \ref{matrixcoefficients}. Since the density of $\Phi_{d\#}\mu$ is $\mathcal{L}_d\rho$, which is smooth and positive whenever $\rho$ is, the action preserves $P^\infty(S^1)$, and the same matrix description is therefore available away from the fixed point. We do not pursue that here.

	We stress that what has been reformulated is the derivative at $\vol$, and two limitations should be recorded. First, for $\zeta\neq0$ the eigenvectors produced above have coefficients decaying only polynomially in the frequency, since $u$ supported on the frequencies $d^j$ satisfies $|u_{d^j}|=|\zeta/d|^j$. They therefore lie in $L^2_0(\vol)$, as Theorem \ref{thm:kloeckner} asserts, but not in the space of smooth potentials, so they are tangent vectors of the completion rather than of $P^\infty(S^1)$. For $\zeta=0$ the eigenspace is the kernel, spanned by the $V_{c_m}$ and $V_{s_m}$ with $d\nmid m$, and it does contain smooth potentials. Second, the metric mean dimension estimates of \cite[Theorem 1.1]{Klo13} concern the action on all of $P(S^1)$ with the $W_p$ distance, a metric rather than differentiable invariant, and have no counterpart here. The infinitesimal deformations of an invariant measure themselves, on the other hand, are exactly the eigenvectors above, and their study in a broader setting, together with the failure of first order rigidity that they express, is the subject of work in preparation by two of the authors with D. Finamore.

%%%%%%%%%%%%%%%%%%%%%%%%%%%%%%%%%%%%%%%%%%%%%%%%%%%%%%%%%%%%%%%%%%%%%%%%%%%%%%%%%%%%%%%%%%%%%%%%%%%%%%%%%%%%%%%%%%%%%%%%%%%%%%%%%%%%%%%

\section*{Acknowledgements}
\noindent C. S. R. has been partially supported by S\~{a}o Paulo Research Foundation (FAPESP): grant \#2018/13481-0, and grant \#2020/04426-6. L. A. B. San Martin has been partially supported by S\~{a}o Paulo Research Foundation (FAPESP): grant \#2018/13481-0. The opinions, hypotheses and conclusions or recommendations expressed in this work are the responsibility of the authors and do not necessarily reflect the views of FAPESP.

\noindent A.M.S.G, and C. S. R. would like to acknowledge support from the Max Planck Society, Germany, through the award of a Max Planck Partner Group for Geometry and Probability in Dynamical Systems.

%%%%%%%%%%%%%%%%%%%%%%%%%%%%%%%%%%%%%%%%

\bibliographystyle{amsalpha}

\begin{thebibliography}{amsalpha}


\bibitem[AG08]{ambrosiogigli2008}
L.~Ambrosio and N.~Gigli, \emph{Construction of the parallel transport in the Wasserstein space}, Methods Appl. Anal. \textbf{15} (2008), no.~1, 1--30.

\bibitem[AGS05]{ambrosio2005gradient}
L.~Ambrosio, N.~Gigli, and G.~Savar\'e, \emph{Gradient flows in metric spaces and in the space of probability measures}, Lectures in Mathematics ETH Z\"urich, Birkh\"auser, Basel, 2005.

\bibitem[BH99]{BH99}
M.~R.~Bridson and A.~Haefliger, \emph{Metric spaces of non-positive curvature}, Grundlehren der mathematischen Wissenschaften, vol.~319, Springer-Verlag, Berlin, 1999.

\bibitem[Bil99]{billingsley2013convergence}
P.~Billingsley, \emph{Convergence of probability measures}, second ed., Wiley Series in Probability and Statistics, John Wiley \& Sons, New York, 1999.

\bibitem[Cho11]{chodosh2011interval}
O.~Chodosh, \emph{Optimal transport and Ricci curvature: Wasserstein space over the interval}, arXiv:1105.2883, 2011.

\bibitem[DF21]{ding2021geometry}
H.~Ding and S.~Fang, \emph{Geometry on the Wasserstein space over a compact Riemannian manifold}, Acta Math. Sci. Ser. B (Engl. Ed.) \textbf{41} (2021), no.~6, 1959--1984.

\bibitem[DSS10]{delon2010fast}
J.~Delon, J.~Salomon, and A.~Sobolevski, \emph{Fast transport optimization for Monge costs on the circle}, SIAM J. Appl. Math. \textbf{70} (2010), no.~7, 2239--2258.

\bibitem[Gig11]{gigli2011inverse}
N.~Gigli, \emph{On the inverse implication of Brenier-McCann theorems and the structure of $(P_2(M),W_2)$}, Methods Appl. Anal. \textbf{18} (2011), no.~2, 127--158.

\bibitem[GKP11]{GangboKimPacini2011}
W.~Gangbo, H.~K.~Kim, and T.~Pacini, \emph{Differential forms on Wasserstein space and infinite-dimensional Hamiltonian systems}, Mem. Amer. Math. Soc. \textbf{211} (2011), no.~993.

\bibitem[Klo13]{Klo13}
B.~Kloeckner, \emph{Optimal transport and dynamics of expanding circle maps acting on measures}, Ergodic Theory Dynam. Systems \textbf{33} (2013), no.~2, 529--548.

\bibitem[KM97]{kriegl1997convenient}
A.~Kriegl and P.~W.~Michor, \emph{The convenient setting of global analysis}, Mathematical Surveys and Monographs, vol.~53, American Mathematical Society, Providence, RI, 1997.

\bibitem[Lot08]{lott2006some}
J.~Lott, \emph{Some geometric calculations on Wasserstein space}, Comm. Math. Phys. \textbf{277} (2008), no.~2, 423--437.

\bibitem[Lot17a]{lott2017intrinsic}
J.~Lott, \emph{An intrinsic parallel transport in Wasserstein space}, Proc. Amer. Math. Soc. \textbf{145} (2017), no.~12, 5329--5340.

\bibitem[Lot17b]{lott2017ontangent}
J.~Lott, \emph{On tangent cones in Wasserstein space}, Proc. Amer. Math. Soc. \textbf{145} (2017), no.~7, 3127--3136.

\bibitem[LV09]{lott2009ricci}
J.~Lott and C.~Villani, \emph{Ricci curvature for metric-measure spaces via optimal transport}, Ann. of Math. (2) \textbf{169} (2009), no.~3, 903--991.

\bibitem[McC97]{mccann1997aconvexity}
R.~J.~McCann, \emph{A convexity principle for interacting gases}, Adv. Math. \textbf{128} (1997), no.~1, 153--179.

\bibitem[Ott01]{otto2001geometry}
F.~Otto, \emph{The geometry of dissipative evolution equations: the porous medium equation}, Comm. Partial Differential Equations \textbf{26} (2001), no.~1-2, 101--174.

\bibitem[Stu06]{sturm2006geometry}
K.-T.~Sturm, \emph{On the geometry of metric measure spaces. I}, Acta Math. \textbf{196} (2006), no.~1, 65--131.

\bibitem[Stu26]{sturm2026sectional}
K.-T.~Sturm, \emph{Sectional curvature for Kantorovich-Wasserstein and Hellinger-Kantorovich geometries}, arXiv:2606.14318, 2026.

\bibitem[Vil09]{villani2009optimal}
C.~Villani, \emph{Optimal transport: old and new}, Grundlehren der mathematischen Wissenschaften, vol.~338, Springer-Verlag, Berlin, 2009.

\end{thebibliography}

\end{document}